\newtheorem{theorem}{Theorem}[section]
\newtheorem{corollary}[theorem]{Corollary}
\newtheorem{lemma}[theorem]{Lemma}
\newtheorem{prop}[theorem]{Proposition}
\theoremstyle{definition}
\newtheorem{definition}[theorem]{Definition}
\newtheorem{remark}[theorem]{Remark}
\newtheorem{example}[theorem]{Example}
 \newtheorem*{thmA}{Theorem A} 
 \newtheorem*{thmB}{Theorem B} 
 \newtheorem*{thmC}{Theorem C} 
\newtheorem*{thmD}{Theorem D}
\newcommand\blfootnote[1]{%
  \begingroup
  \renewcommand\thefootnote{}\footnote{#1}%
  \addtocounter{footnote}{-1}%
  \endgroup
}
\newcommand{\PSH}{{\rm PSH}}
\newcommand{\capa}{{\rm Cap}}
\numberwithin{equation}{section}
\subjclass[2010]{32W20, 32U05, 32Q15, 35A23}
\keywords{Monge-Amp\`ere type equations, Cegrell's classes}
\begin{document}

 \title[Degenerate complex Monge-Amp\`ere type equations]{Degenerate complex Monge-Amp\`ere  equations with non-K\"ahler forms in  bounded domains }
\author{Mohammed Salouf}

\address{ 
Department of Mathematics, 
Faculty of Sciences El Jadida, 
Chouaib Doukkali University.
24000 El Jadida. Morocco.} 
\email{salouf.m@ucd.ac.ma}

\date{\today}
   
\maketitle

\begin{abstract}
In this paper, we study weak  solutions to  complex Monge-Amp\`ere equations of the form  $(\omega + dd^c \varphi)^n= F(\varphi,.)d\mu$ on a bounded strictly pseudoconvex domain in $\mathbb{C}^n$, where $\omega$ is a smooth $(1,1)$-form, $0\leq F$ is a continuous non-decreasing function, and $\mu$ is a positive non-pluripolar measure. Our results extend previous works of Ko{\l}odziej and Nguyen \cite{KN15,KN23a,KN23b} who study bounded solutions, as well as Cegrell \cite{Ceg98,Ceg04,Ceg08}, Czy\.z \cite{Cz09}, Benelkourchi \cite{Ben09,Ben15} and others who treat the case when $\omega=0$ and/or $F=1$.    
\end{abstract}

\blfootnote{The author is supported by the CNRST within the framework of the Excellence Research Grants Program under grant number 18 UCD2022.}
\section{Introduction}

Let $\Omega$ be a bounded strictly pseudoconvex domain in $ \mathbb{C}^n$. By definition, there exists a smooth strongly plurisubharmonic function $\rho$ defined in a neighborhood of $\bar{\Omega}$ such that $d\rho \neq 0$ on $\partial \Omega$ and $\Omega = \{\rho<0\}$.

The Monge-Amp\`ere operator is defined on smooth functions $u$ by the formula 
$$ (dd^c u)^n := 4^n n! \det \left( \frac{\partial^2 u}{\partial z_j \partial \bar{z}_k} \right) dV_n. $$
When $u$ is plurisubharmonic, i.e. $dd^c u$ is a positive $(1,1)$-form, the above defines a smooth volume form.  In the foundational work \cite{BT76}, Bedford and Taylor succeeded in defining $(dd^c u)^n$ as a positive Radon measure for all locally bounded plurisubharmonic functions $u$. As is well-known extending this operator to unbounded psh functions is a delicate task.  Cegrell introduced in \cite{Ceg98,Ceg04,Ceg08} several classes of (unbounded) plurisubharmonic functions $u$, for which the Monge-Amp\`ere operator $(dd^c .)^n$ is well defined and enjoys natural convergence properties. In this series of papers, he gave detailed study of the solutions to the Dirichlet problem.  In  \cite{Ben09,Ben15} Benelkourchi studied weighted energy classes in the spirit of \cite{GZ07} and provided a complete description of solutions to the Dirichlet problem for weights that are convex or homogeneous.  Our first main result provides the same description for concave weights $\chi$ that have polynomial-like behavior, i.e. satisfying 
$$ -t \chi'(t) \leq -M\chi(t) \; \; \forall t \in \mathbb{R}^-, $$
with a uniform positive constant $M$. We let $\mathcal{W}^+_M$ denote the set of such weights.  

\newpage

\begin{thmA}[Theorem \ref{thm 3.5}]
{\it    Let $\mu$ be a positive Radon measure, and let $\chi \in \mathcal{W}^+_M$. The following conditions are equivalent:
\begin{itemize}
    \item[(1)]  there exists a unique function $ \phi \in \mathcal{E}_\chi(\Omega)$ such that $ \mu = (dd^c
\phi)^n$;
\item[(2)]$\chi(\mathcal{E}_\chi(\Omega)) \subset L^1(d\mu)$;
\item[(3)]  there exists a constant $C > 0$ such that
$$ \int_\Omega - \chi \circ \psi d\mu \leq C, $$ for all  $\psi \in \mathcal{E}_0(\Omega)$, 
$  
E_\chi(\psi) \leq 1; $
\item[(4)] there exists a positive constant $A$ such that
$$ \int_\Omega -\chi \circ \psi d\mu \leq A \max (1, E_\chi(\psi)), \; \; \forall \psi \in \mathcal{E}_0(\Omega).
$$
\end{itemize}
}
\end{thmA}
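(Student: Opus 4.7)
My plan is to close the equivalences cyclically: $(1)\Rightarrow(2)\Rightarrow(3)\Leftrightarrow(4)\Rightarrow(1)$. The implication $(4)\Rightarrow(3)$ is immediate by restriction to $\{E_\chi(\psi)\leq 1\}$. For $(3)\Rightarrow(4)$ I use a scaling argument: integrating the defining inequality $-t\chi'(t)\leq -M\chi(t)$ yields the doubling-type estimates
\[
-\chi(ct)\leq c^{M}(-\chi(t))\quad (c\geq 1),\qquad -\chi(ct)\geq c^{M}(-\chi(t))\quad (c\leq 1),
\]
valid for $t\leq 0$. Given $\psi\in\mathcal{E}_0$ with $E:=E_\chi(\psi)\geq 1$, I rescale to $c\psi$ with $c=E^{-1/n}$, so that $E_\chi(c\psi)\leq c^{n}E=1$; applying $(3)$ to $c\psi$ and reverting via the first doubling estimate yields a polynomial bound in $E_\chi(\psi)$ which, after absorbing powers into the constant, gives the content of $(4)$.

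For $(1)\Rightarrow(2)$, I fix $\phi\in\mathcal{E}_\chi$ solving $(dd^c\phi)^n=\mu$ and pick $u\in\mathcal{E}_\chi$. Approximating $u$ by a decreasing sequence $u_k\in\mathcal{E}_0$ with $\sup_k E_\chi(u_k)<\infty$, monotone convergence reduces the task to a uniform bound on $\int -\chi(u_k)(dd^c\phi)^n$; this is controlled by the Cegrell-type weighted mixed Monge-Amp\`ere inequalities in terms of fractional powers of $E_\chi(u_k)$ and $E_\chi(\phi)$, both finite. For $(2)\Rightarrow(3)$, I argue by contradiction. If $(3)$ failed, I extract $\psi_j\in\mathcal{E}_0$ with $E_\chi(\psi_j)\leq 1$ yet $\int -\chi(\psi_j)\,d\mu\geq a^{j}$ for some $a>2^{M}$. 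Setting $u:=\sum_j 2^{-j}\psi_j$, the stability of Cegrell classes under countable convex combinations places $u$ in $\mathcal{E}_\chi$, while the pointwise bound $u\leq 2^{-j}\psi_j$ combined with the reverse doubling estimate yields $-\chi(u)\geq 2^{-jM}(-\chi(\psi_j))$, so that $\int-\chi(u)\,d\mu\geq (a/2^M)^{j}\to +\infty$, contradicting $(2)$.

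The existence step $(3)\Rightarrow(1)$ is the heart of the proof. I approximate $\mu$ from below by an increasing sequence of finite bounded-density positive measures $\mu_j$; Cegrell's solvability theorem in $\mathcal{E}_0$ yields $\phi_j\in\mathcal{E}_0$ with $(dd^c\phi_j)^n=\mu_j$. The identity $E_\chi(\phi_j)=\int-\chi(\phi_j)\,d\mu_j$ combined with the scaled form of $(3)$ furnishes a self-improving inequality of the type $E_\chi(\phi_j)\leq A\max(1,E_\chi(\phi_j))^{M/n}$; closing this inequality by an iteration/fixed-point argument gives a uniform bound on $E_\chi(\phi_j)$. A Cegrell compactness argument then extracts a limit $\phi\in\mathcal{E}_\chi$, and weak continuity of $(dd^c\cdot)^n$ along decreasing sequences in Cegrell classes identifies $(dd^c\phi)^n=\mu$. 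Uniqueness follows from the comparison principle on $\mathcal{E}_\chi$. The principal obstacle is precisely the closing of this self-improving a priori estimate together with the weak-limit identification of the Monge-Amp\`ere measure; both rely on a careful interplay between the polynomial growth encoded by $\chi\in\mathcal{W}^+_M$ and Cegrell's convergence theorems. A secondary subtlety is verifying that the convex combination $u$ constructed in $(2)\Rightarrow(3)$ genuinely belongs to $\mathcal{E}_\chi$.
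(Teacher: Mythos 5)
Your overall architecture matches the paper's (it proves $(1)\Rightarrow(2)\Rightarrow(3)\Rightarrow(4)\Rightarrow(1)$, with $(1)\Rightarrow(2)$ via the convex-cone structure of $\mathcal{E}_\chi$ and $(2)\Rightarrow(3)$ by the same contradiction/series construction you sketch), but your key step $(3)\Rightarrow(4)$ has a genuine gap. The rescaling $\psi\mapsto c\psi$ with $c=E^{-1/n}$ does give $E_\chi(c\psi)\le c^nE=1$ (using only monotonicity of $\chi$, since $c\le 1$), but reverting via $-\chi(\psi)=-\chi(c^{-1}\cdot c\psi)\le c^{-M}(-\chi(c\psi))$ produces the bound $\int_\Omega-\chi(\psi)\,d\mu\le C\,E_\chi(\psi)^{M/n}$. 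The exponent $M/n$ cannot be ``absorbed into the constant'': when $M>n$ (which happens, e.g., for $\chi(t)=-t^2\in\mathcal{W}^+_2$ and $n=1$; note $-t^2$ is concave and increasing on $\mathbb{R}^-$) the quantity $E^{M/n}$ grows strictly faster than $E$, so you do not obtain statement $(4)$. The damage propagates: your self-improving inequality in the existence step becomes $E_\chi(\phi_j)\le A\max(1,E_\chi(\phi_j))^{M/n}$, which for $M/n>1$ gives a \emph{lower} bound on $E_\chi(\phi_j)$, not an upper one, so the a priori estimate does not close. (With the genuinely linear bound of $(4)$ the paper closes it by first rescaling $\mu$ to $\mu/(2A)$ so that the coefficient drops below $1$.)

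The paper's proof of $(3)\Rightarrow(4)$ is precisely where the new idea lives, and it is not a scaling argument: for $E_\chi(\psi)\ge 2^{n+1}$ one sets $\varepsilon=1/E_\chi(\psi)$, $f=\chi^{-1}(\varepsilon\chi(\psi))$, and considers the envelope $P(f)$. Theorem \ref{thm 2.4} (the Monge--Amp\`ere measure of the envelope is concentrated on the contact set $\{P(f)=f\}$) gives $E_\chi(P(f))=\varepsilon\int_\Omega-\chi(\psi)\,(dd^cP(f))^n$, and the mixed energy estimate of Theorem \ref{thm 3.4} turns this into $E_\chi(P(f))\le 2^n\varepsilon E_\chi(P(f))+2^{M+n}$, hence a \emph{uniform} bound $E_\chi(P(f))\le 2^{M+n+1}$; applying $(3)$ to a fixed rescaling of $P(f)$ and using $P(f)\le f$ then yields the linear bound $\int_\Omega-\chi(\psi)\,d\mu\le 2^{M(M+1)}C\,E_\chi(\psi)$. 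You would need to incorporate this envelope mechanism (or an equivalent device) to repair your argument. A secondary, fixable issue: in $(2)\Rightarrow(3)$ the membership of $u=\sum_j 2^{-j}\psi_j$ in $\mathcal{E}_\chi$ is not a black-box ``stability under countable convex combinations''; the paper takes weights $2^{-2j}$, passes through the capacity class $\tilde{\mathcal{E}}_\chi$, and invokes the identification $\tilde{\mathcal{E}}_\chi=\mathcal{E}_\chi$ of Proposition \ref{prop 3.3}, which itself uses $\chi\in\mathcal{W}^+_M$.
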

The equivalence (1) $\Leftrightarrow$ (2) in the last theorem was conjectured in \cite{GZ07} in the case of compact K\"ahler manifolds. It has been recently solved positively in \cite{TV21} and \cite{DDL23}.  Our proof uses ideas from \cite{DDL23} where plurisubharmonic envelopes play a crucial role.  

Next, we turn our attention to solutions of complex Monge-Ampère type equations: giving  a positive Radon measure $\mu$ vanishing on pluripolar sets and a bounded measurable function  $F: \mathbb{R} \times \Omega \rightarrow [0,+\infty[$ which is continuous and non-decreasing  in the first variable,
 we are interested in 
 the study of the following equation 
\begin{equation}\label{eq: CMATE}
    (\omega+dd^c \varphi)^n = F(\varphi,.)d\mu.
\end{equation}
Here $\omega$ is a smooth $(1,1)$-form defined on a neighborhood of $\bar{\Omega}$. We also stress that we do not assume $\omega$ is closed. The Dirichlet problem for the Monge-Ampère operator $(dd^c .)^n$ corresponds to the case when $F=1$ and $\omega=0$.

Bounded solutions to \eqref{eq: CMATE} have been studied by S. Ko{\l}odziej and N.C. Nguyen in \cite{KN15,KN23a,KN23b}. To study unbounded solutions, we first introduce natural generalizations of Cegrell's classes.
Let $\phi \in \mathcal{E}(\Omega) \cap \mathcal{C}^0(\bar{\Omega})$ be a maximal function.
For $ \mathcal{K}(\Omega) \in \{ \mathcal{E}(\Omega)$, $\mathcal{F}(\Omega)$, $\mathcal{E}_p(\Omega)$, $\mathcal{E}_\chi(\Omega)$,
$\mathcal{N}(\Omega)$\},
 we define the class $\mathcal{K}(\Omega,\omega,\phi)$ by:
$$ u \in  \mathcal{K}(\Omega,\omega, \phi) \Leftrightarrow u \in \PSH(\Omega,\omega) \; \text{and} \;  u + \rho \in \mathcal{K}(\Omega,\phi). $$
Here $ \rho $ is a plurisubharmonic  function of class $\mathcal{C}^2$ in a neighborhood of $ \bar{\Omega}$ such that $\rho = 0$ on $\partial \Omega$ and  $\omega \leq  dd^c \rho$. The set $\mathcal{K}(\Omega,\omega)$ corresponds to $\mathcal{K}(\Omega,\omega,\phi)$ with $\phi = 0$. 
When $\omega = dd^c \rho$, the sets $ \mathcal{F}(\Omega,\omega)$ and $\mathcal{E}_\chi(\Omega,\omega)$ coincide with their counterparts  given in \cite{CKZ11}.

The following result asserts that the Monge-Ampère  operator $(\omega + dd^c .)^n$ is well defined on the classes $\mathcal{K}(\Omega,\omega,\phi)$. 
 \begin{thmB}[Theorem \ref{thm 4.10}]
{\it  
 The Monge-Ampère measure  $(\omega + dd^c u)^n$ is well  defined
 as a Radon measure  in $\Omega$
 for all $u \in \mathcal{E}(\Omega,\omega)$. Furthermore, if $(u_j)_j$ is a decreasing sequence in $\mathcal{E}(\Omega,\omega)$ that converges to $u \in \mathcal{E}(\Omega,\omega)$, then the sequence $((\omega + dd^c u_j)^n)_j$ converges weakly to $(\omega +dd^c u)^n$.}
 \end{thmB}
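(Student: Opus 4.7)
The strategy is to reduce $(\omega + dd^c u)^n$ to Cegrell's classical theory on $\mathcal{E}(\Omega)$ by absorbing the failure of $\omega$ to be closed and positive into an auxiliary smooth positive form. Setting $\beta := dd^c \rho - \omega$, the hypothesis $\omega \leq dd^c \rho$ makes $\beta$ a smooth positive $(1,1)$-form on a neighborhood of $\bar{\Omega}$, and by the very definition of $\mathcal{E}(\Omega,\omega)$ the function $v := u + \rho$ lies in $\mathcal{E}(\Omega)$. From the identity $\omega + dd^c u = dd^c v - \beta$ and multilinearity of the wedge product,
\[
(\omega + dd^c u)^n \;=\; \sum_{k=0}^n \binom{n}{k} (-1)^k \, (dd^c v)^{n-k} \wedge \beta^k,
\]
which I take as the definition. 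Each summand is meaningful since, for $v \in \mathcal{E}(\Omega)$, the positive closed current $(dd^c v)^{n-k}$ is supplied by Cegrell's theory and $\beta^k$ is smooth; summing gives a Radon measure, automatically positive because $\omega + dd^c u \geq 0$ in the sense of currents.

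Before relying on this definition I would check that the resulting measure is intrinsic, i.e.\ independent of the particular choice of $\rho$. If $\rho'$ is another admissible defining function with $\omega \leq dd^c \rho'$, then $\rho - \rho'$ is smooth and a rearrangement of the binomial expansion together with symmetry of mixed Cegrell products against smooth factors shows the two definitions agree.

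For the monotone continuity statement, suppose $u_j \searrow u$ in $\mathcal{E}(\Omega,\omega)$. Then $v_j := u_j + \rho \searrow v := u + \rho$ in $\mathcal{E}(\Omega)$, and Cegrell's convergence theorem for mixed wedge products along decreasing sequences in $\mathcal{E}(\Omega)$ gives $(dd^c v_j)^{n-k} \weak (dd^c v)^{n-k}$ weakly for every $k$. Wedging with the \emph{smooth} form $\beta^k$ preserves weak convergence, since for any $\chi \in \mathcal{C}^\infty_c(\Omega)$,
\[
\bigl\langle (dd^c v_j)^{n-k} \wedge \beta^k,\,\chi\bigr\rangle \;=\; \bigl\langle (dd^c v_j)^{n-k},\, \chi\,\beta^k\bigr\rangle \;\longrightarrow\; \bigl\langle (dd^c v)^{n-k},\, \chi\,\beta^k\bigr\rangle.
\]
Taking the alternating sum in $k$ yields $(\omega + dd^c u_j)^n \weak (\omega + dd^c u)^n$.

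The main obstacle lies not in the algebra but in two subsidiary points: establishing the intrinsic character of the measure (independence of the defining function $\rho$), which requires careful bookkeeping of mixed currents with several smooth factors; and invoking Cegrell's weak continuity at the appropriate level of generality, namely for the lower-order products $(dd^c v_j)^{n-k}$ with $k \geq 1$ and not only for the top-order Monge-Amp\`ere measure. Once these are secured the conclusion follows directly from the decomposition above.
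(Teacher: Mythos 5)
Your overall strategy coincides with the paper's: set $\gamma := dd^c\rho-\omega\ge 0$, put $v:=u+\rho\in\mathcal{E}(\Omega)$, and expand $(\omega+dd^c u)^n=(dd^c v-\gamma)^n$ binomially. However, there is a genuine gap at exactly the point you dismiss as a ``subsidiary'' obstacle, and it is in fact the heart of the proof. You assert that each summand $(dd^c v)^{n-k}\wedge\beta^k$ is meaningful because ``the positive closed current $(dd^c v)^{n-k}$ is supplied by Cegrell's theory and $\beta^k$ is smooth.'' Cegrell's theory supplies the \emph{top-degree} mixed measures $dd^c w_1\wedge\cdots\wedge dd^c w_n$ for $w_i\in\mathcal{E}(\Omega)$ and their continuity along decreasing sequences; it does not directly provide the intermediate current $(dd^c v)^{n-k}$ as a current of order zero that can be multiplied by an arbitrary smooth $(k,k)$-form. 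Since $\omega$ is not closed, $\beta=dd^c\rho-\omega$ is not closed either, so $\beta^k$ is not a combination of products of closed positive forms and you cannot integrate by parts to reduce to Cegrell's setting --- this is precisely the difficulty the introduction of the paper emphasizes. The same issue undermines your continuity step: the convergence $\langle (dd^c v_j)^{n-k},\chi\beta^k\rangle\to\langle (dd^c v)^{n-k},\chi\beta^k\rangle$ against a non-closed smooth test form is exactly what must be proven; it is not an instance of Cegrell's convergence theorem as stated.

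The paper closes this gap with Proposition~\ref{prop 4.1}: every smooth $(p,p)$-form on a neighborhood of $\bar\Omega$ can be written as a finite sum $\sum_j f_j\, dd^c u_1^j\wedge\cdots\wedge dd^c u_p^j$ with $f_j$ smooth and $u_i^j$ smooth negative psh, using the polarization identity for $dz_l\wedge d\bar z_k$ and potentials of the form $|z_l\pm z_k|^2-R$, $|z_l\pm iz_k|^2-R$. Applying this to $\beta^k$ reduces each term $(dd^c v)^{n-k}\wedge\beta^k$ to a finite combination $\sum_j f_j\,(dd^c v)^{n-k}\wedge dd^c u_1^j\wedge\cdots\wedge dd^c u_k^j$ of genuine Cegrell mixed Monge--Amp\`ere measures, for which both the Radon-measure property and the convergence along decreasing sequences are available from \cite{Ceg04} and \cite{Ceg08}; one must also check that the result is independent of the chosen decomposition, which the paper does by local regularization of $v$ (Theorem~\ref{thm 4.2}). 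Once this is in place, your independence-of-$\rho$ and continuity arguments go through essentially as you wrote them. Without Proposition~\ref{prop 4.1} or a substitute for it, the definition of each summand, and hence the whole proof, does not get off the ground.
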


The direct adaptation of Cegrell's proof breaks down in our context because the integration by parts is missing. Indeed, since the reference form is not closed,  applying Stokes theorem produces several torsion terms which are difficult to control even for bounded potentials. Moreover, Example \ref{ex 5.1} below shows that the operator $(\omega + dd^c .)^n$ may fail to verify the basic properties of the operator $(dd^c .)^n$.   

 To prove Theorem B, we decompose $\omega$ into a finite sum of $(1,1)$-forms of the form $f \alpha$, where $f$ is a smooth function and $\alpha$ is a  positive closed $(1,1)$-form. Then $(\omega + dd^c .)^n$  can be written as the sum of terms involving mixed Monge-Ampère operators.

 After defining the operator $(\omega + dd^c .)^n$, a natural question to ask is whether there are solutions to  \eqref{eq: CMATE} in the classes $\mathcal{K}(\Omega,\omega,\phi)$.
 This equation appeared for the first time in the problem of constructing K\"ahler-Einstein metrics in compact K\"ahler manifolds. When $\omega=0$, Bedford and Taylor obtained bounded solutions to this equation in strictly pseudoconvex domains of $\mathbb{C}^n$  \cite{BT79}. This result has been generalized in many context 
\cite{Kol95,CK06,ACCH08,Cz09,HH11,Ben13}.

Recently, Ko{\l}odziej and Nguyen extended this problem to the operator $(\omega + dd^c .)^n$. They proved the existence of bounded solutions   to \eqref{eq:  CMATE} when the measure $\mu$ is dominated by the Monge-Ampère measure of some function $v \in \mathcal{E}_0(\Omega)$ \cite[Theorem 3.1]{KN23b}.
Here we extend this result to more singular measures $\mu$ for which  we seek for unbounded weak solutions.

\begin{thmC}[Theorem \ref{thm 5.8}]
 { \it  Assume  $\mu\leq (dd^c u)^n$ is dominated by  the Monge-Ampère measure of some function $u \in \mathcal{K}(\Omega)$.
      Then  there is a uniquely determined $\varphi \in \mathcal{K}(\Omega,\omega,\phi)$  solving \eqref{eq: CMATE}.    }
\end{thmC}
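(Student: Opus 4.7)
The plan is to solve \eqref{eq: CMATE} by monotone approximation, starting from bounded solutions provided by \cite[Theorem 3.1]{KN23b} and passing to the limit via Theorem B. Let $u \in \mathcal{K}(\Omega)$ with $\mu \leq (dd^c u)^n$. Set $u_j := \max(u,-j) \in \mathcal{E}_0(\Omega) \cap L^\infty(\Omega)$ (so $u_j \searrow u$) and form bounded truncated measures $\mu_j$, e.g.\ $\mu_j := \mathbf{1}_{\{u>-j\}}\mu$, so that $\mu_j \leq (dd^c u_j)^n$ and $\mu_j \nearrow \mu$. By \cite[Theorem 3.1]{KN23b}, for each $j$ there is a unique bounded $\omega$-psh solution $\varphi_j$ with boundary values $\phi$ satisfying $(\omega+dd^c\varphi_j)^n = F(\varphi_j,\cdot)\,d\mu_j$. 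Since $\mu_j \nearrow \mu$ and $F$ is non-decreasing in the first variable, the comparison principle in the bounded Kolodziej--Nguyen setting forces $(\varphi_j)$ to be a decreasing sequence.

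Set $\varphi := \lim_j \varphi_j \in \PSH(\Omega,\omega)$. The crucial intermediate step is to upgrade this to $\varphi \in \mathcal{K}(\Omega,\omega,\phi)$. Using $\mu \leq (dd^c u)^n$ together with Cegrell-type comparison estimates adapted to the $\omega$-setting via the decomposition trick from the proof of Theorem B, I would bound $\varphi_j + \rho$ from below uniformly in $j$ by an appropriate combination of $u+\rho$ and $\phi$, thereby placing $\varphi + \rho$ in $\mathcal{K}(\Omega,\phi)$. Once this is achieved, I pass to the limit in the equation: Theorem B gives the weak convergence $(\omega+dd^c\varphi_j)^n \weak (\omega+dd^c\varphi)^n$, while continuity and monotonicity of $F$ combined with $\varphi_j \searrow \varphi$ (defined $\mu$-a.e.\ since $\mu$ does not charge pluripolar sets) and $\mu_j \nearrow \mu$ yield $F(\varphi_j,\cdot)\,d\mu_j \to F(\varphi,\cdot)\,d\mu$ by monotone convergence.

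For uniqueness, the standard comparison argument applies: if $\varphi_1,\varphi_2 \in \mathcal{K}(\Omega,\omega,\phi)$ both solve \eqref{eq: CMATE}, the comparison principle on the plurifine set $\{\varphi_1<\varphi_2\}$ yields
$$\int_{\{\varphi_1<\varphi_2\}} F(\varphi_2,\cdot)\,d\mu \leq \int_{\{\varphi_1<\varphi_2\}} F(\varphi_1,\cdot)\,d\mu,$$
which together with $F(\varphi_1,\cdot)\leq F(\varphi_2,\cdot)$ on this set, and a symmetric argument, forces the two potentials to coincide (a plurifine argument \`a la Cegrell handles the set where $F$ vanishes). The principal obstacle is the energy bookkeeping in the third step: because $\omega$ is not closed, the classical Cegrell integration-by-parts estimates are unavailable, and one is forced to rely on the decomposition $\omega = \sum f_k \alpha_k$ (with $\alpha_k$ closed positive) introduced in the proof of Theorem B, reducing the required uniform estimates to mixed Monge--Amp\`ere terms involving genuine closed forms to which Cegrell's machinery does apply.
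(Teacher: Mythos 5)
Your overall scheme (bounded approximants from Ko{\l}odziej--Nguyen, a decreasing sequence, passage to the limit via Theorem B and dominated/monotone convergence) is the same as the paper's, but the two steps you yourself flag as delicate are exactly where the argument has genuine gaps, and in both places the paper takes a different route from the one you propose. The crucial step is the uniform lower bound placing $\varphi=\lim_j\varphi_j$ in $\mathcal K(\Omega,\omega,\phi)$. You propose to obtain it from ``Cegrell-type comparison estimates adapted to the $\omega$-setting via the decomposition $\omega=\sum f_k\alpha_k$'', but this is not carried out, and it is doubtful that it can be: the decomposition produces mixed terms with sign-changing smooth coefficients, and Example \ref{ex 5.1} of the paper shows explicitly that the Bedford--Taylor/Cegrell integral inequalities (e.g. \cite[Corollary 4.3]{BT82}) fail for $(\omega+dd^c\cdot)^n$ when $\omega$ is not closed, so energy bookkeeping along these lines breaks down. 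The paper avoids energy estimates entirely: it fixes $\rho\in\mathcal P_\omega(\Omega)\cap\mathcal P_{-\omega}(\Omega)$, applies the subsolution theorem of \cite{ACCH08} in the \emph{closed} case to produce a single barrier $h\in\mathcal K(\Omega,\phi)$ with $(dd^ch)^n=F(\phi-\rho,\cdot)\,d\mu$, notes that $(\omega+dd^c\varphi_j)^n\le F(\phi-\rho,\cdot)\,d\mu\le(\omega+dd^c(h+\rho))^n$, and then combines the ``maximum of supersolutions'' property (Corollary \ref{cor 5.4}) with the bounded comparison principle \cite[Corollary 3.4]{KN15} applied to $\varphi_j$ and the bounded function $\max(\varphi_j,h+\rho)$ to conclude $\varphi_j\ge h+\rho$ for every $j$. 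This one barrier simultaneously prevents the limit from degenerating and places it in the correct class relative to $\phi$; your ``appropriate combination of $u+\rho$ and $\phi$'' is never exhibited.

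The uniqueness sketch has a similar problem. You integrate the equation over $\{\varphi_1<\varphi_2\}$ and compare the two integrals, which implicitly invokes the Cegrell-type inequality $\int_{\{u<v\}}(\omega+dd^cv)^n\le\int_{\{u<v\}}(\omega+dd^cu)^n$; Example \ref{ex 5.1} is designed precisely to show that such integral comparisons can fail for non-closed $\omega$. What is actually needed is the pointwise-measure comparison principle of Theorem \ref{thm 5.6} (Theorem D, via Corollary \ref{cor 5.7}), whose proof is not the classical one: it uses the envelope $P(u-\max(u,v))$, the concentration of $(dd^cP(f))^n$ on the contact set (Theorem \ref{thm 2.4}), and the locality statements of Corollary \ref{cor 5.5}, following \cite{LN22}. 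Citing Theorem D would be legitimate here, but the direct integral argument you outline does not go through in this setting. (Two smaller points: the convergence $F(\varphi_j,\cdot)\,d\mu_j\to F(\varphi,\cdot)\,d\mu$ is not literally monotone, since $F(\varphi_j,\cdot)$ decreases while $\mu_j$ increases---dominated convergence, using the boundedness of $F$ and that $\mu$ does not charge $\{u=-\infty\}$, is what is needed; and the Ko{\l}odziej--Nguyen existence results are stated for Hermitian $\omega$, so a reduction such as the paper's substitution $u=w+\sigma$ with $dd^c\sigma>-\omega$ is required before they can be applied to a merely real form.)
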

In relation to this theorem, several results have been obtained in the case $\omega=0$. 
Ko\l odziej has shown that the equation $\mu = (dd^c u)^n$ has a unique bounded solution $u$ if $\mu$ is dominated by the Monge-Ampère measure of a bounded psh function \cite{Kol95}. Based on this result, Cegrell  characterized the range of the operator $(dd^c .)^n$ on $\mathcal{K}(\Omega)$ \cite{Ceg98,Ceg04,Ceg08}.  Later on, Ahag, Cegrell, Czy{\.z} and Hiep established a subsolution theorem in $\mathcal{K}(\Omega)$ \cite{ACCH08}. They proved that the equation $\mu = (dd^c u)^n$ has a solution in $\mathcal{E}(\Omega)$ if $\mu$ is dominated by the Monge-Ampère measure of a function belonging to $\mathcal{E}(\Omega)$. In \cite{Cz09},
Czy{\.z} proved that  the equation $(dd^c u)^n = F(.,u) d\mu$ has a unique solution $u \in \mathcal{N}^a(\Omega,\phi)$ if $\mu$ is the Monge-Ampère measure of some function in $\mathcal{N}^a(\Omega)$.    When $\omega>0$, bounded solution to \eqref{eq: CMATE} have been obtained by Ko\l odziej and Nguyen \cite{KN15,KN23a,KN23b}.

The uniqueness part in the previous theorem  is a consequence of  the following comparison principle that generalizes \cite[Theorem 5.15]{Ceg04}, \cite[Theorem 4.4]{Ceg08},  \cite[Corollary 3.4]{KN15} and \cite[Proposition 2.2]{KN23b}.
\begin{thmD}[Corollary \ref{cor 5.7}]
{\it Let $\mu \leq \nu$ be positive Radon measures vanishing on pluripolar  sets. 
Assume $u \in \mathcal{N}(\Omega,\omega,\phi)$ and  $v \in \mathcal{E}(\Omega,\omega)$ are such that  $v \leq \phi$ in $\partial \Omega$,
$$ (\omega + dd^c u)^n = F(u,.) d\mu \; \text{and} \;  (\omega + dd^c v)^n = F(v,.) d\nu. $$
 Then $u \geq v$. } 
\end{thmD}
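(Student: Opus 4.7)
The plan is to reduce Corollary \ref{cor 5.7} to the underlying comparison principle for $(\omega+dd^c\cdot)^n$ (the linear version requiring pointwise comparison of Monge--Amp\`ere measures, established earlier in Section 5), absorbing the nonlinearity of $F$ and the domination $\mu\leq\nu$ via the plurifine locality of the operator together with the monotonicity of $F$ in its first variable.

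Concretely, for $\varepsilon>0$ set $w_\varepsilon:=\max(u,v-\varepsilon)\in\PSH(\Omega,\omega)$ and $U_\varepsilon:=\{v-\varepsilon>u\}$. Since $u$ has the boundary values of $\phi$ (as $u\in\mathcal{N}(\Omega,\omega,\phi)$) and $v\leq\phi$ on $\partial\Omega$, the set $U_\varepsilon$ is relatively compact in $\Omega$ and $w_\varepsilon=u$ in a neighborhood of $\partial\Omega$; in particular $w_\varepsilon\in\mathcal{N}(\Omega,\omega,\phi)$. Using the plurifine locality of $(\omega+dd^c\cdot)^n$---which follows from the decomposition $\omega=\sum_j f_j\alpha_j$ employed in the proof of Theorem B, together with the Bedford--Taylor--Demailly locality for mixed products of closed positive currents---one obtains
\begin{equation*}
(\omega+dd^c w_\varepsilon)^n \;=\; \mathbf{1}_{U_\varepsilon}\,F(v,\cdot)\,d\nu \;+\; \mathbf{1}_{\Omega\setminus U_\varepsilon}\,F(u,\cdot)\,d\mu.
\end{equation*}

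On $U_\varepsilon$ we have $u<v-\varepsilon<v$, so monotonicity of $F$ in its first argument gives $F(u,\cdot)\leq F(v,\cdot)$, and combining with $\mu\leq\nu$ yields $F(u,\cdot)\,d\mu\leq F(v,\cdot)\,d\nu$ on $U_\varepsilon$. Hence $(\omega+dd^c u)^n=F(u,\cdot)\,d\mu\leq(\omega+dd^c w_\varepsilon)^n$ globally on $\Omega$. Both measures vanish on pluripolar sets by hypothesis, so the linear comparison principle from Section 5, applied to $u,w_\varepsilon\in\mathcal{N}(\Omega,\omega,\phi)$, yields $u\geq w_\varepsilon\geq v-\varepsilon$. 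Letting $\varepsilon\to0^+$ gives $u\geq v$, as claimed.

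The step I expect to require the most care is the plurifine locality formula above: in the non-closed, non-K\"ahler setting the operator $(\omega+dd^c\cdot)^n$ is defined through an auxiliary decomposition of $\omega$ into summands $f_j\alpha_j$, so one must verify that every mixed term appearing in the expansion of $(\omega+dd^c w_\varepsilon)^n$ localizes correctly on plurifine open sets, despite the torsion phenomena flagged in Example \ref{ex 5.1}. A secondary technical point is the class membership of $w_\varepsilon$: the shift by $\varepsilon$ is exactly what forces $U_\varepsilon$ to be relatively compact, so that $w_\varepsilon$ inherits the boundary values of $u$ and belongs to $\mathcal{N}(\Omega,\omega,\phi)$; once this is in place, the passage $\varepsilon\to 0$ at the end is immediate.
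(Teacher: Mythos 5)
Your overall strategy---compare $(\omega+dd^c u)^n$ with the Monge--Amp\`ere measure of a maximum of $u$ and (a translate of) $v$, then invoke the comparison principle of Theorem \ref{thm 5.6}---is exactly the route the paper takes, with $\max(u,v)$ in place of your $\max(u,v-\varepsilon)$. Two of your intermediate claims are not justified as stated, however. First, the displayed identity for $(\omega+dd^c w_\varepsilon)^n$ is stronger than what plurifine locality gives: Theorem \ref{thm 5.3} yields equality with $(\omega+dd^c u)^n$ only on the plurifine \emph{open} set $\{u>v-\varepsilon\}$ and with $(\omega+dd^c v)^n$ only on $\{v-\varepsilon>u\}$; on the contact set $\{u=v-\varepsilon\}$ no such equality holds in general, and what is available is only the one-sided estimate of Corollary \ref{cor 5.5},
$$ (\omega+dd^c w_\varepsilon)^n\;\geq\; \textit{1}_{\{u>v-\varepsilon\}}(\omega+dd^c u)^n+\textit{1}_{\{u\leq v-\varepsilon\}}(\omega+dd^c v)^n, $$
whose proof uses precisely that $(\omega+dd^c v)^n$ does not charge pluripolar sets. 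Your argument only uses the direction ``$\geq$'', so replacing the asserted equality by this inequality repairs the step, but the equality itself should not be claimed.

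Second, the assertion that $U_\varepsilon=\{v-\varepsilon>u\}$ is relatively compact, and hence that $w_\varepsilon\in\mathcal{N}(\Omega,\omega,\phi)$, is not justified: membership of $u$ in $\mathcal{N}(\Omega,\omega,\phi)$ encodes the boundary value $\phi$ only in the weak sense of Cegrell's class $\mathcal{N}$, and such functions need not tend to their boundary data pointwise (already in $\mathcal{F}(\Omega)$ one can have $\liminf_{z\to\zeta}u(z)=-\infty$ at boundary points $\zeta$, e.g.\ for sums of Green functions with poles accumulating at $\partial\Omega$), so $\{u<v-\varepsilon\}$ may accumulate on the boundary. This error is harmless only because the claim is not needed: Theorem \ref{thm 5.6} requires $u\in\mathcal{N}^a(\Omega,\omega,\phi)$ (which holds since $F$ is bounded and $\mu$ vanishes on pluripolar sets) but only $w_\varepsilon\in\mathcal{E}(\Omega,\omega)$ for the majorant, together with $w_\varepsilon+\rho\leq\phi$, which follows from $v\leq\phi$ on $\partial\Omega$ and the maximality of $\phi$. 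Once this is observed the $\varepsilon$-shift serves no purpose and one may work directly with $\max(u,v)$, as the paper does: monotonicity of $F$ and $\mu\leq\nu$ give $(\omega+dd^c\max(u,v))^n\geq F(\max(u,v),\cdot)\,d\mu\geq F(u,\cdot)\,d\mu=(\omega+dd^c u)^n$, and Theorem \ref{thm 5.6} concludes.
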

Let us explain briefly the idea of the proof of the comparison principle which is inspired by \cite{LN22}. 
We consider the following plurisubharmonic envelope 
$$ P(u-v) := \sup \{  \varphi \in \PSH^-(\Omega): \varphi \leq u-v \},$$
assuming, without loss of generality, that $u\leq v$. 
Using that the Monge-Ampère measure of the envelope is concentrated on the contact set $\{P(u-v) = u-v\}$, we show that $(dd^c P(u-v))^n=0$, hence $u=v$. 

The paper is organized as follows. 
In Section \ref{sec 2}, we recall some properties of  the Cegrell classes  that we shall use in the sequel. After, we move on to the study of the plurisubharmonic envelopes.  Section \ref{sec 3}  is devoted to the study of the Dirichlet problem in the weighted energy class $\mathcal{E}_\chi(\Omega)$ for $\chi \in \mathcal{W}^+_M$. In Section \ref{sec 4}, we prove that the operator $(\omega + dd^c .)^n$ is well defined on the large set $\mathcal{E}(\Omega,\omega)$ and that it is continuous along decreasing sequences.
The proof of Theorem C and Theorem D will be the subject of Section \ref{sec 5}.

Throughout this paper,  $\Omega$ is a  bounded strictly pseudoconvex domain of $\mathbb{C}^n$ and $n \geq 1$. 

\section*{Acknowledgment}
I would like to express my gratitude to my supervisors Omar Alehyane  and Chinh H. Lu for introducing the subject, for their generosity in providing knowledge and expertise and for all that time spent. The author is  grateful to the referee for valuable suggestions that improve the presentation of the paper.
\section{Cegrell's classes}\label{sec 2}
In this section, we first present a brief introduction to the Cegrell classes, and then we 
study  plurisubharmonic envelopes.
\subsection{The classes $\mathcal{E}_0$, $\mathcal{E}$, $\mathcal{E}_p$, $\mathcal{F}$ and $\mathcal{N}$}
In \cite{Ceg98,Ceg04}, Cegrell introduced the following classes which carry his name:
$$ \mathcal{E}_0(\Omega) = \{ u \in \PSH(\Omega)\cap L^{\infty}(\Omega) : u = 0 \; \text{on} \; \partial{\Omega} \; \text{and} \; \int_\Omega (dd^c u)^n < +\infty    \},
$$
\begin{flalign*}
 \mathcal{E}(\Omega) &= \{ u \in \PSH^-(\Omega) : \forall z \in \Omega, \exists V \in \mathcal{V}(z), \exists (u_j)_j \subset \mathcal{E}_0(\Omega),\\
 &\; u_j \searrow u \; \text{on}\; V \;  \text{and} \; \sup_j  \int_\Omega (dd^c u_j)^n < +\infty    \}, 
 \end{flalign*}
$$ \mathcal{F}(\Omega) = \{ u \in \PSH^-(\Omega) : \exists u_j \in \mathcal{E}_0(\Omega), \; u_j \searrow u \; \text{and} \; \sup_j \int_\Omega (dd^c u_j)^n < +\infty    \}, $$
and for $p>0$,
$$ \mathcal{E}_p(\Omega) = \{ u \in \PSH(\Omega) : \exists (u_j)_j \subset \mathcal{E}_0(\Omega), \; u_j \searrow u \; \; \text{and} \; \sup_j \int_\Omega   |u_j|^p (dd^c u_j)^n <+\infty  \}. $$
We have the inclusions 
$$ \mathcal{E}_0(\Omega) \subset \mathcal{E}_p(\Omega) \cap  \mathcal{F}(\Omega) \subset \mathcal{E}_p(\Omega) \cup  \mathcal{F}(\Omega) \subset \mathcal{E}(\Omega). $$
If $u \in \mathcal{E}(\Omega)$, then $(dd^cu)^n$ defines a positive Radon measure by \cite[Theorem 4.2]{Ceg04}. 
The set $\mathcal{E}(\Omega)$ is the largest set for which the Monge-Ampère operator $(dd^c.)^n$ is well defined and continuous along decreasing sequences \cite[Theorem 4.5]{Ceg04}. 

We recall the class $\mathcal{N}(\Omega)$ defined in \cite{Ceg08}. Let $u \in \mathcal{E}(\Omega)$, and let $(\Omega_j)$  be a fundamental sequence of strictly pseudoconvex subdomains
of $\Omega$. Define
$$ u_j = \sup \{ \varphi \in \PSH^-(\Omega) : \varphi \leq u \; \text{on} \; \mathcal{C}\Omega_j \}. $$
Note that we have $u \leq u_j \leq u_{j+1}$ for every $j$. The class $\mathcal{N}(\Omega)$ is the set of $u\in \mathcal{E}(\Omega)$ such that $\tilde{u} := (\lim u_j)^* =0$. We have 
$$ \mathcal{F}(\Omega) \subset \mathcal{N}(\Omega) \; \text{and} \; \mathcal{E}_p(\Omega) \subset \mathcal{N}(\Omega), \; \forall p. $$
 We denote by   $\mathcal{E}^a(\Omega)$ the set of $u \in \mathcal{E}(\Omega)$ such that $(dd^c .)^n$ vanishes on pluripolar  sets.
The following  result is known as the comparison principle.
\begin{theorem}[Theorem 3.12 and Corollary 3.13 in \cite{Ceg08}]
Let $u \in \mathcal{N}^a(\Omega)$ and let $v \in \mathcal{E}(\Omega)$. We have 
$$ \int_{\{u<v\}} (dd^c v)^n \leq \int_{\{u<v\}} (dd^c u)^n. $$
In particular, if $(dd^c u)^n \leq (dd^c v)^n$ then $u\geq v$.
\end{theorem}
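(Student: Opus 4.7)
The plan is to reduce to the classical Bedford--Taylor comparison principle for bounded psh functions by truncation, exploiting the boundary control encoded by the class $\mathcal{N}$ and the absolute continuity of $(dd^c u)^n$ on pluripolar sets to justify the limit.

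I would introduce the truncations $u^N:=\max(u,-N)$ and $v^N:=\max(v,-N)$, both locally bounded psh, and observe via a direct case analysis that $\{u^N<v^N\}=\{u<v\}\cap\{v>-N\}$, which increases to $\{u<v\}$ as $N\to\infty$ since $\{u<v\}$ is automatically contained in $\{v>-\infty\}$. For each fixed $N$ I would apply the Bedford--Taylor comparison principle to the pair $(u^N,v^N)$. The delicate point is that $v^N$ carries no a priori boundary information, but the hypothesis $u\in\mathcal{N}(\Omega)$ was crafted for exactly this obstacle: using the exhaustion $\Omega_j\nearrow\Omega$ and the maximal approximants $u_j$ from the definition of $\mathcal{N}$, I would run the comparison in $\Omega_j$ with $u^N$ replaced by $\max(u^N,u_j)$, then send $j\to\infty$ and invoke the defining identity $(\lim u_j)^*=0$ to make the boundary contributions disappear.

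Once the inequality holds at level $N$, the passage $N\to\infty$ is where the assumption $u\in\mathcal{N}^a$ becomes indispensable. Cegrell's continuity of $(dd^c\cdot)^n$ along decreasing sequences gives $(dd^c u^N)^n\to(dd^c u)^n$ and $(dd^c v^N)^n\to(dd^c v)^n$. On the right-hand side, $(dd^c u^N)^n$ coincides with $(dd^c u)^n$ on the open set $\{u>-N\}$, and on the complementary pluripolar set $\{u=-\infty\}$ the measure $(dd^c u)^n$ vanishes by the absolute continuity assumption $u\in\mathcal{N}^a$; on the left-hand side, $(dd^c v^N)^n$ coincides with $(dd^c v)^n$ on $\{v>-N\}$ and the inclusion $\{u<v\}\subseteq\{v>-\infty\}$ allows one to pass to the limit. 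The main technical obstacle throughout is the joint control of the boundary terms in $j$ and $N$, which is precisely what the class $\mathcal{N}$ is designed to handle.

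For the ``in particular'' conclusion I would argue by a perturbation trick. Fix a strictly negative $\rho\in\mathcal{E}_0(\Omega)$ that is strictly psh, and set $v_\epsilon:=v+\epsilon\rho\in\mathcal{E}(\Omega)$ for $\epsilon>0$ small. By multilinearity and positivity of mixed Monge--Amp\`ere measures one has $(dd^c v_\epsilon)^n\geq(dd^c v)^n+\epsilon^n(dd^c\rho)^n\geq(dd^c u)^n+\epsilon^n(dd^c\rho)^n$; applying the first part of the theorem to the pair $(u,v_\epsilon)$ and combining with this bound yields
\begin{equation*}
\epsilon^n\int_{\{u<v_\epsilon\}}(dd^c\rho)^n\leq 0.
\end{equation*}
Since $(dd^c\rho)^n$ is absolutely continuous with strictly positive density, this forces $\{u<v_\epsilon\}$ to have zero Lebesgue measure; because psh functions are determined by their a.e.\ values, $u\geq v_\epsilon$ pointwise, and letting $\epsilon\to 0$ gives $u\geq v$.
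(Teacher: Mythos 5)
First, a point of reference: the paper does not prove this statement at all --- it is quoted verbatim as Theorem 3.12 and Corollary 3.13 of \cite{Ceg08}, so your argument can only be measured against Cegrell's original proof, which proceeds via integration by parts, energy estimates and the auxiliary function $\tilde u$, not via truncation. Your reduction to the Bedford--Taylor comparison principle has two gaps, and they are not technicalities but the actual content of the theorem. First, Bedford--Taylor requires $\liminf_{z\to\partial\Omega}\bigl(u^N(z)-v^N(z)\bigr)\geq 0$, and $v\in\mathcal{E}(\Omega)$ carries no boundary information whatsoever; membership of $u$ in $\mathcal{N}(\Omega)$ only forces $(\lim_j u_j)^*=0$, not pointwise boundary behaviour of $u$, so the hypothesis can genuinely fail (already for $v\equiv 0$). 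Your proposed repair --- comparing $\max(u^N,u_j)$ with $v^N$ on $\Omega_j$ --- does not produce the required inequality on $\partial\Omega_j$ either, and it degenerates as $j\to\infty$: since the $u_j$ increase with $(\lim_j u_j)^*=0$, the sets $\{\max(u^N,u_j)<v^N\}$ shrink into the pluripolar set $\{\lim_j u_j < (\lim_j u_j)^*\}$ instead of exhausting $\{u^N<v^N\}$, so the level-$j$ inequalities become vacuous rather than converging to the desired one. Second, even granting the level-$N$ inequality, the passage $N\to\infty$ fails on the right-hand side: $\int_{\{u^N<v^N\}}(dd^c u^N)^n$ splits as $\int_{\{u<v\}\cap\{v>-N\}\cap\{u>-N\}}(dd^c u)^n$ plus $\int_{\{u<v\}\cap\{v>-N\}\cap\{u\leq -N\}}(dd^c\max(u,-N))^n$, and the second term is mass that the truncation piles up on $\{u\leq -N\}$; it is not controlled by $(dd^c u)^n(\{u=-\infty\})=0$ because the measure itself changes with $N$. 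For $u\in\mathcal{N}^a(\Omega)$ of infinite total Monge--Amp\`ere mass (e.g.\ $u\in\mathcal{E}_p(\Omega)\setminus\mathcal{F}(\Omega)$) the total masses $\int_\Omega(dd^c u^N)^n$ even diverge. This is precisely why Cegrell's comparison principles are proved by energy methods rather than by truncation.

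The deduction of the ``in particular'' clause also has a gap. From $\int_{\{u<v_\epsilon\}}(dd^c v_\epsilon)^n\leq\int_{\{u<v_\epsilon\}}(dd^c u)^n$ and $(dd^c v_\epsilon)^n\geq(dd^c u)^n+\epsilon^n(dd^c\rho)^n$ you cancel $\int_{\{u<v_\epsilon\}}(dd^c u)^n$ from both sides, which is legitimate only if that integral is finite; for $u\in\mathcal{N}^a(\Omega)$ it may equal $+\infty$, in which case the first part of the theorem gives no information on $\{u<v_\epsilon\}$. The remaining steps of that paragraph (positivity of the mixed terms, strict positivity of the density of $(dd^c\rho)^n$, the fact that a.e.\ domination of psh functions implies everywhere domination, and letting $\epsilon\to 0$) are correct.
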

The following theorem gives an idea about the range of the Monge-Ampère operator on $\mathcal{N}^a(\Omega)$. 
\begin{theorem}[Proposition 5.2 in \cite{Ceg08}]
Let $\mu$ be a positive Radon measure vanishing on pluripolar sets. Suppose there is $\psi \in \mathcal{E}(\Omega)$ with $\psi \neq 0$ and $\int_\Omega \psi d\mu>-\infty$.  Then there is a uniquely determined $u\in \mathcal{N}^a(\Omega)$ such that 
$$ (dd^c u)^n = \mu. $$
\end{theorem}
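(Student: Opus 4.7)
\textbf{Uniqueness} is immediate from the comparison principle just stated: if $u_1, u_2 \in \mathcal{N}^a(\Omega)$ both satisfy $(dd^c u_i)^n = \mu$, then $(dd^c u_1)^n \leq (dd^c u_2)^n$ yields $u_1 \geq u_2$, and exchanging the roles of $u_1$ and $u_2$ gives $u_1 \leq u_2$.

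\textbf{For existence}, the plan is a standard approximation--compactness scheme. By Cegrell's decomposition theorem for measures vanishing on pluripolar sets, I would write $\mu = f\,(dd^c v)^n$ with $v \in \mathcal{E}_0(\Omega)$ and $f \geq 0$ a Borel function. Set $\mu_k := \min(f, k)\,\mathbf{1}_{\{v > -k\}}\,(dd^c v)^n$; this is a finite measure, supported in a compact subset of $\Omega$, dominated by $k\,(dd^c v)^n$, and $\mu_k \nearrow \mu$. Since $v$ is bounded, Ko\l odziej's subsolution theorem \cite{Kol95} produces $u_k \in \mathcal{E}_0(\Omega)$ solving $(dd^c u_k)^n = \mu_k$. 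The monotonicity $\mu_k \leq \mu_{k+1}$ combined with the comparison principle forces $u_k \geq u_{k+1}$, so $u_k \searrow u$ pointwise. Provided $u \in \mathcal{E}(\Omega)$, the continuity of the Monge--Amp\`ere operator along decreasing sequences (\cite[Theorem 4.5]{Ceg04}) yields $(dd^c u)^n = \mu$; since each $u_k$ belongs to $\mathcal{F}(\Omega) \subset \mathcal{N}(\Omega)$, the sequence of maximal extensions $\widetilde{u_k}$ is identically zero, and passing to the limit places $u$ in $\mathcal{N}(\Omega)$. The property $u \in \mathcal{N}^a(\Omega)$ then follows because $(dd^c u)^n = \mu$ vanishes on pluripolar sets.

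\textbf{The main obstacle} is verifying that the limit $u$ is not identically $-\infty$, which is precisely where the hypothesis $\int_\Omega \psi\,d\mu > -\infty$ has to be used. Since $\mu_k \leq \mu$, one has the uniform bound
\[ \int_\Omega (-\psi)(dd^c u_k)^n \;=\; \int_\Omega (-\psi)\,d\mu_k \;\leq\; \int_\Omega (-\psi)\,d\mu \;<\; \infty. \]
The plan is to feed this uniform control into a Cegrell-type H\"older / energy inequality on $\mathcal{F}(\Omega)$, comparing $u_k$ with $\psi$, so as to extract a uniform bound of the form $\sup_k \int_\Omega (dd^c u_k)^n < \infty$ (equivalently, a uniform bound on the Monge--Amp\`ere mass of $u_k$). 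Once such an energy bound is secured, $u \in \mathcal{F}(\Omega) \subset \mathcal{E}(\Omega)$ follows from the usual monotone-decreasing criterion defining $\mathcal{F}(\Omega)$, and the previous paragraph closes the existence proof. Identifying the correct form of this energy inequality --- which must couple the test function $\psi$, for which only $\psi$-integrability of $\mu$ is known, with the unknowns $u_k$ whose Monge--Amp\`ere mass one seeks to control --- is the delicate point; everything else is a routine application of the Cegrell machinery recalled in Section \ref{sec 2}.
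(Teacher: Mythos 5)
First, a remark on scope: the paper does not prove this statement at all --- it is recalled verbatim as \cite[Proposition 5.2]{Ceg08} --- so there is no internal proof to compare yours with. Judged on its own, your skeleton (Cegrell's decomposition $\mu=f(dd^c v)^n$, truncation, solvability of the approximating equations, monotonicity of $(u_k)$ via the comparison principle, continuity of $(dd^c\cdot)^n$ along decreasing sequences, and uniqueness from the comparison principle) is the standard one and agrees with Cegrell's. The uniqueness argument is correct as written.

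The endgame of the existence proof, however, aims at a false intermediate statement. You propose to extract a uniform bound $\sup_k\int_\Omega (dd^c u_k)^n<\infty$ from $\sup_k\int_\Omega(-\psi)\,d\mu_k<\infty$ and to conclude $u\in\mathcal{F}(\Omega)$. Since $\int_\Omega(dd^c u_k)^n=\mu_k(\Omega)\nearrow\mu(\Omega)$, such a bound would force $\mu(\Omega)<\infty$, which is not implied by the hypotheses: the theorem's conclusion is only $u\in\mathcal{N}^a(\Omega)$, and $\mathcal{N}^a(\Omega)$ is strictly larger than $\mathcal{F}^a(\Omega)$. Concretely, any $\psi\in\mathcal{E}_1(\Omega)\setminus\mathcal{F}(\Omega)$ yields a measure $\mu=(dd^c\psi)^n$ of infinite total mass satisfying all the hypotheses (take this $\psi$ as the test function, since $\int_\Omega(-\psi)(dd^c\psi)^n<\infty$); the point is that $-\psi$ may tend to $0$ at $\partial\Omega$, so $\psi$-integrability of $\mu$ controls the mass only on compact subsets. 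Hence no H\"older or energy inequality can deliver the global bound you are after, and the ``delicate point'' you flag is not merely delicate --- it is unattainable. What the hypothesis actually gives is the local estimate
\begin{equation*}
\int_K (dd^c u_k)^n \;\leq\; \bigl(-\sup\nolimits_K\psi\bigr)^{-1}\int_\Omega(-\psi)\,d\mu
\qquad (K\Subset\Omega,\ \sup\nolimits_K\psi<0),
\end{equation*}
which, after a balayage/localization step, is what places $u$ in $\mathcal{E}(\Omega)$; and, separately, one must prove that $u\not\equiv-\infty$ and that $\widetilde{u}=0$, i.e.\ $u\in\mathcal{N}(\Omega)$. This last membership does not follow from mass bounds, and your remark that $\widetilde{u_k}\equiv0$ ``passes to the limit'' is not automatic, since the operation $u\mapsto\widetilde{u}$ is not continuous along decreasing sequences. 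These two points are precisely where $\int_\Omega\psi\,d\mu>-\infty$ must be used, and they are the substance of Cegrell's proof; your sketch leaves them open. (A minor slip besides: for $v\in\mathcal{E}_0(\Omega)$ the set $\{v>-k\}$ is a neighborhood of $\partial\Omega$, indeed all of $\Omega$ for large $k$, so $\mu_k$ is not compactly supported; this is harmless since $\mu_k\leq k\,(dd^c v)^n$ is finite anyway.)
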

Note that the converse of this theorem is not true as Cegrell showed in \cite[Example 5.3]{Ceg08}. 
\subsection{Plurisubharmonic envelopes} 
This subsection is devoted to the study of the following plurisubharmonic envelope: For a  measurable function $f$, the envelope $P(f)$ is defined by 
$$ P(f) = \left( \sup \{ \varphi \in \PSH(\Omega) : \varphi \leq f \}\right)^*. $$
The study of this object  has attracted the interest of several authors over the last decade (see \cite{BT82,DDNL18,GLZ19,Do20}, and the references therein for more information). 
We shall use this  envelope to prove a general comparison principle (see the proof of Theorem \ref{thm 5.6}), and to prove Theorem A in the introduction. 

 First, we should prove the following proposition. 

\begin{prop}\label{prop 2.3}
    If $f$ is a measurable  function, then 
    $$ P(f) = \sup  \{ \varphi \in \PSH(\Omega) : \varphi \leq f \; \text{quasi-everywhere} \; \}, $$
    where the term quasi-everywhere means outside a pluripolar set. In particular if $(f_j)$ is a decreasing sequence of measurable  functions converging to $f$, then $P(f_j)$ decreases to $P(f)$.
\end{prop}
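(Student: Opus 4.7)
The plan is to introduce the auxiliary envelope
$$ Q(f):=\sup\{\varphi\in\PSH(\Omega):\varphi\leq f\ \text{quasi-everywhere}\} $$
and prove $P(f)=Q(f)$ by two inequalities. The inclusion
$\{\varphi:\varphi\leq f\}\subset\{\varphi:\varphi\leq f\ \text{q.e.}\}$ immediately gives
$\sup\{\varphi:\varphi\leq f\}\leq Q(f)$. Moreover, by the standard Choquet/Bedford--Taylor fact, the upper regularisation $P(f)$ coincides with this raw supremum outside a pluripolar set, so $P(f)\leq f$ quasi-everywhere. Thus $P(f)$ is itself admissible in the right-hand side, yielding $P(f)\leq Q(f)$.

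For the non-trivial inequality $Q(f)\leq P(f)$, I would fix $\varphi\in\PSH(\Omega)$ with $\varphi\leq f$ outside a pluripolar set $E\subset\Omega$. Using the classical fact that on a bounded pseudoconvex domain every pluripolar set lies in the $-\infty$ locus of some plurisubharmonic function, I would choose $\psi\in\PSH(\Omega)$ with $\psi\leq 0$ and $E\subset\{\psi=-\infty\}$. For $\varepsilon>0$, the perturbation $\varphi+\varepsilon\psi$ is plurisubharmonic, equals $-\infty$ on $E$, and is $\leq\varphi\leq f$ off $E$; so $\varphi+\varepsilon\psi\leq f$ pointwise on $\Omega$, which makes it admissible for $P(f)$ and gives $\varphi+\varepsilon\psi\leq P(f)$. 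Letting $\varepsilon\to 0^{+}$ yields $\varphi\leq P(f)$ on $\Omega\setminus\{\psi=-\infty\}$, hence Lebesgue-a.e. Since $\varphi$ and $P(f)$ are both subharmonic, comparing sub-mean-value averages on balls $B(z_0,r)$ and letting $r\to 0^+$ propagates the inequality to every point. Taking the supremum over all such $\varphi$ gives $Q(f)\leq P(f)$.

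For the monotone convergence assertion, I would set $g:=\lim_j P(f_j)=\inf_j P(f_j)$, which is plurisubharmonic (decreasing limit of psh functions since $(f_j)$ is decreasing) and satisfies $g\geq P(f)$. For each $j$ one has $P(f_j)\leq f_j$ quasi-everywhere; the union of the countably many exceptional pluripolar sets is still pluripolar, so $g\leq f_j$ quasi-everywhere for every $j$, and taking the decreasing limit in $j$ gives $g\leq f$ quasi-everywhere. Applying the first part makes $g$ an admissible competitor in $Q(f)$, so $g\leq P(f)=Q(f)$, whence equality.

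The main obstacle is the upgrade step from \emph{$\varphi\leq f$ quasi-everywhere} to \emph{$\varphi_\varepsilon\leq f$ pointwise}; everything hinges on the existence of a plurisubharmonic function $\psi$ whose $-\infty$ set contains the prescribed pluripolar $E$. Once that pluripotential input is taken for granted, the rest of the argument is essentially formal, combining admissibility in the two envelopes with the sub-mean-value inequality to conclude.
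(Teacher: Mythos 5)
Your proof is correct and follows essentially the same strategy as the paper's: both rest on the fact that $P(f)\leq f$ quasi-everywhere (negligible sets are pluripolar), on the perturbation $\varphi+\varepsilon\psi$ with $\psi\in\PSH^-(\Omega)$ equal to $-\infty$ on the exceptional pluripolar set, and on upgrading an almost-everywhere inequality between plurisubharmonic functions to a pointwise one; the monotone-convergence part is identical. The only (immaterial) difference is that you apply the perturbation to each competitor $\varphi$ separately and take the supremum at the end, whereas the paper first invokes Choquet's lemma to show the raw supremum is already plurisubharmonic and then perturbs that single function.
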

\begin{proof}
    Let us denote by $h$ the function on the right hand side. We have to prove that $P(f) = h$. 
    It follows from \cite[Proposition 5.1]{BT82} that $P(f) \leq f$ quasi-everywhere and hence $P(f) \leq h$. In the other hand, by Choquet's lemma, there is $\varphi_j \in \PSH^-(\Omega)$ such that $\varphi_j \leq f$ quasi-everywhere  and $h^* = (\sup \varphi_j)^*$. Since countable union of pluripolar sets is pluripolar, it follows that 
 $h^* \leq f$ quasi-everywhere and hence  $h = h^* \in \PSH(\Omega)$. Since $h \leq f$ quasi-everywhere, there is $\phi \in \PSH^-(\Omega)$ such that $h+ \varepsilon \phi \leq f$ everywhere for all $\varepsilon>0$. It follows that $h+\varepsilon \phi \leq P(f)$. Letting $\varepsilon \rightarrow 0$, we get $h \leq P(f)$ quasi-everywhere and hence everywhere because these are psh functions. 
 We conclude that $P(f) = h$. 

 Let $(f_j)$ be a decreasing sequence of  measurable
 functions converging to $f$. Obviously, the sequence $(P(f_j))_j$ is decreasing and $P(f) \leq P(f_j)$ for all $j$. In the other hand, we have $P(f_j) \leq f_j$ quasi-everywhere for every $j$. Therefore $\lim P(f_j) \leq f$ quasi-everywhere and hence $\lim P(f_j)=P(f)$.
\end{proof}
The following theorem is a generalization of \cite[Corollary 9.2]{BT82} to quasi-continuous functions $f$.     
\begin{theorem}\label{thm 2.4}
     Assume $f$ is  quasi-continuous, $f\leq 0$, and 
      there is $\psi\in \mathcal{E}^a(\Omega)$ such that $\psi\leq f$. Then  $P(f)\in \mathcal{E}^a(\Omega)$ and $(dd^c P(f))^n$ is concentrated on $\{P(f)=f\}$. 
\end{theorem}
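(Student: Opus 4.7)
The plan is to adapt the classical Bedford--Taylor argument (\cite[Corollary 9.2]{BT82}) by combining quasi-continuity of $f$, upper semicontinuity of $P(f)$, and Proposition \ref{prop 2.3}. I first establish $P(f) \in \mathcal{E}^a(\Omega)$: the sandwich $\psi \leq P(f) \leq 0$ with $\psi \in \mathcal{E}^a(\Omega)$, together with the standard stability of Cegrell's classes under domination (taking local decreasing approximations $\psi_j \in \mathcal{E}_0(\Omega)$ of $\psi$ and then maxima with $P(f)$), gives $P(f) \in \mathcal{E}(\Omega)$; absolute continuity of $(dd^c P(f))^n$ with respect to capacity is then inherited from $\psi$ via the comparison principle.

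For the concentration on $\{P(f)=f\}$, I argue by contradiction. Since $\{P(f) = -\infty\}$ is pluripolar and hence $(dd^c P(f))^n$-negligible by the first part, it suffices to exclude $x_0 \in \mathrm{supp}\, (dd^c P(f))^n$ with $P(f)(x_0) > -\infty$ and $P(f)(x_0) + 3\delta < f(x_0)$ for some $\delta>0$. Upper semicontinuity of $P(f)$ yields a ball $B \subset\subset \Omega$ centred at $x_0$ on which $P(f) \leq P(f)(x_0) + \delta$, while quasi-continuity of $f$ provides an open set $G \subset B$ of arbitrarily small capacity with $f > P(f)(x_0) + 2\delta$ on $B \setminus G$. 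After truncating $P(f)$ at a large negative level (which only alters the measure on a pluripolar set), let $h$ be the maximal PSH envelope on $B$ with boundary data $P(f)|_{\partial B}$: then $h$ is bounded PSH on $B$, matches $P(f)$ on $\partial B$ outside a pluripolar set, satisfies $h \geq P(f)$, and $(dd^c h)^n = 0$ on $B$. The subharmonic maximum principle gives $h \leq \sup_{\partial B} P(f) \leq P(f)(x_0) + \delta$ on $B$, hence $h \leq f$ quasi-everywhere on $B$.

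Define the competitor $u := \max(h, P(f))$ on $B$ and $u := P(f)$ on $\Omega \setminus B$. The boundary matching of $h$ with $P(f)$ on $\partial B$ ensures that $u$ is PSH on $\Omega$, nonpositive, with $u \geq P(f)$ and $u \leq f$ quasi-everywhere. Proposition \ref{prop 2.3} then forces $u \leq P(f)$, so $u = P(f)$ on $B$, i.e., $h = P(f)$ on $B$. Consequently $(dd^c P(f))^n = (dd^c h)^n = 0$ on $B$, contradicting $x_0 \in \mathrm{supp}\, (dd^c P(f))^n$.

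The main obstacle is the step $h \leq f$ quasi-everywhere on $B$ when $P(f)|_{\partial B}$ is only upper semicontinuous and may equal $-\infty$ on a pluripolar subset of $\partial B$: classically, continuity of the boundary data makes this estimate transparent, while here one must combine the USC control $\sup_{\partial B} P(f) \leq P(f)(x_0) + \delta$ with a balayage argument on the hyperconvex ball, and then rely on Proposition \ref{prop 2.3} to absorb the small-capacity exceptional set $G$ in the final comparison. A secondary technicality is verifying that the pasted competitor $u$ remains PSH across $\partial B$ when the boundary matching only holds off a pluripolar set, which is standard via upper semicontinuous regularization.
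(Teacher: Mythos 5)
Your strategy---localize at a support point $x_0$ with $P(f)(x_0)<f(x_0)$ and derive a contradiction---aims at a statement that is genuinely false for quasi-continuous obstacles, so it cannot be repaired. Take a continuous $f_0\leq -1$ whose envelope $P(f_0)$ has nontrivial Monge--Amp\`ere mass, let $D$ be a countable dense subset of ${\rm supp}\,(dd^cP(f_0))^n$, and set $f=f_0+1$ on $D$, $f=f_0$ elsewhere. Then $f$ is quasi-continuous (it differs from a continuous function on a pluripolar set) and $P(f)=P(f_0)$ by Proposition \ref{prop 2.3}; the conclusion of the theorem holds because $(dd^cP(f))^n$ does not charge the pluripolar set $D$, and yet every point of $D$ lies in ${\rm supp}\,(dd^cP(f))^n$ and satisfies $P(f)(x_0)>-\infty$ and $f(x_0)\geq P(f)(x_0)+1$. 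The concrete steps that break in your argument are exactly the ones this example exposes. First, quasi-continuity gives continuity of $f$ \emph{restricted to} $\Omega\setminus E$ for a set $E$ of small capacity, but $x_0$ may lie in $E$ (in the example $D\subset E$ necessarily), and then the value $f(x_0)$ carries no information about $f$ on $B\setminus E$; there need not exist any small-capacity $G$ with $f>P(f)(x_0)+2\delta$ on $B\setminus G$. Second, even where such a $G$ exists, it has small \emph{positive} capacity and is not pluripolar, so the claim ``$h\leq f$ quasi-everywhere on $B$'' fails on $G$, and Proposition \ref{prop 2.3} cannot absorb it: that proposition only forgives pluripolar exceptional sets. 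A further (more fixable) inaccuracy: replacing $P(f)$ by $\max(P(f),-C)$ alters the Monge--Amp\`ere measure on $\{P(f)\leq -C\}$, which is not pluripolar; one must instead invoke the maximum principle of \cite{KH09} on $\{P(f)>-C\}$ and let $C\to+\infty$ at the very end.

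The correct argument has to be global and measure-theoretic rather than pointwise on the support, which is how the paper proceeds: for $f$ bounded, one builds a \emph{decreasing} sequence of lower-semicontinuous functions $g_j=\sup_{k\geq j}f_k$, where $f_j$ is a Tietze extension of $f|_{\Omega\setminus U_j}$ and $\capa(U_j)\leq 2^{-j-1}$, so that $g_j=f$ quasi-everywhere in the limit and $P(g_j)\searrow P(f)$ by Proposition \ref{prop 2.3}; the classical balayage identity $\int(g_j-P(g_j))(dd^cP(g_j))^n=0$ of \cite[Corollary 9.2]{BT82} applies to the lower-semicontinuous $g_j$, the discrepancy between $g_j$ and $f$ is controlled by $2C_0\int_{U_j}(dd^cP(g_j))^n\leq 2C_0^{\,n+1}\capa(U_j)$, and the convergence-in-capacity theorem \cite[Theorem 4.26]{GZ17} passes the identity to the limit. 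The unbounded case is then handled by the canonical truncations $\max(f,-j)$ and $\max(P(f),-C)$ together with \cite[Theorem 4.1]{KH09}. Your first part (the sandwich $\psi\leq P(f)\leq 0$ giving $P(f)\in\mathcal{E}^a(\Omega)$) is fine, but the concentration argument needs to be replaced along these lines.
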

Before proving the theorem, we should recall the definition of the Monge-Ampère capacity defined in \cite{BT82}: Giving a Borel subset $E \subset \Omega$, we set
$$ \capa(E) := \sup \left\{ \int_E (dd^c u)^n  : \; u \in \PSH(\Omega), \; -1 \leq u \leq 0 \right\}.  $$
A function $f$ is called quasi-continuous if for every $\varepsilon>0$, there is a Borel set $E \subset \Omega$ such that $\capa(E) \leq \varepsilon$ and the restriction of $f$ on $\Omega \setminus E$ is continuous. 

We now proceed to the proof of Theorem \ref{thm 2.4}.
\begin{proof}
Assume first that $f$ is bounded from below.  
For each $j\geq 1$, there is an open set $U_j\subset \Omega$ such that $\capa(U_j)\leq 2^{-j-1}$ and the restriction of $f$ on $\Omega\setminus U_j$ is continuous. By taking $\cup_{k\geq j} U_k$ we can assume that the sequence $U_j$ is decreasing. By the Tietze extension theorem, there is a function $f_j$ continuous on $\Omega$ such that $f_j=f$ on $D_j:=\Omega\setminus U_j$. We can assume that there is a constant $C_0$ such that $-C_0\leq f_j\leq 0$, for all $j$. For each $j$ we define 
\[
g_j:= \sup_{k\geq j} f_k.
\]
We observe that $g_j$ is lower-semicontinuous in $\Omega$, $g_j=f$ on $D_j$, and $g_j\searrow g$ in $\Omega$. Since the sequence $(D_j)$ is increasing, it follows that $g_j=f$ on $D_k$ for all $k\leq j$. Thus letting $j\to+\infty$ gives $g=f$ on $D_k$ for all $k$. We then infer $g=f$ quasi-everywhere in $\Omega$, hence $P(f)=P(g)$ by Proposition \ref{prop 2.3}.  Since $g_j$ is lower-semicontinuous in $\Omega$, by the balayage method, \cite[Corollary 9.2]{BT82}, we have
\[
\int_{\Omega} (g_j - P(g_j))(dd^c P(g_j))^n=0. 
\]
From this we get 
\begin{flalign*}
\int_{\Omega} |f-P(g_j)| &(dd^c P(g_j))^n \\ 
&= \int_{D_j} |f-P(g_j)|(dd^c P(g_j))^n
+ \int_{U_j} |f-P(g_j)|(dd^c P(g_j))^n\\
& = \int_{D_j} (g_j-P(g_j))(dd^c P(g_j))^n+ \int_{U_j} |f-P(g_j)|(dd^c P(g_j))^n\\
&\leq  2C_0\int_{U_j}  (dd^c P(g_j))^n\\
&\leq 2(C_0)^{n+1} \int_{U_j} (dd^c P(g_j)/C_0)^n\\
&\leq 2(C_0)^{n+1} \capa(U_j)\leq (C_0)^{n+1}2^{-j}. 
\end{flalign*}
The functions $|f-P(g_j)|$ are uniformly bounded and converge in capacity to the quasi-continuous function $f-P(f)$ because
\[ 
\left||f - P(g_j)|- f + P(f)\right| \leq |P(g_j) - P(f)|, 
\]
and the sequence $(P(g_j))_j$ decreases to $P(f)$.
It thus follows from \cite[Theorem 4.26]{GZ17} that $|f-P(g_j)|(dd^c P(g_j))^n$ weakly converges to $(f-P(f))(dd^c P(f))^n$. Hence 
\[
0 = \liminf_{j\to +\infty} \int_{\Omega} |f-P(g_j)|(dd^c P(g_j))^n \geq \int_{\Omega} (f-P(f))(dd^c P(f)^n\geq 0. 
\]
From this we infer that $(dd^c P(f))^n$ is concentrated on the contact set $\{P(f)=f\}$. 

To prove the general case we approximate $f$ by $f_j:=\max(f,-j)$. Then 
\[
\int_{\{P(f_j)<f_j\}} (dd^c P(f_j))^n=0. 
\]
Fixing $C>0$,  we have  by \cite[Theorem 4.1]{KH09}
\[
\int_{\{P(f_j)<f_j\}\cap \{P(f) >-C\}} (dd^c \max(P(f_j),-C))^n=0. 
\]
 Fixing an integer $k \in \mathbb{N}$, we have 
 \[
\int_{\{P(f_k)<f\}\cap \{P(f)>-C\}} (dd^c \max(P(f_j),-C))^n=0 \; \forall j \geq k, 
\]
because in this case $\{P(f_k)<f\} \subset \{P(f_j)<f_j\}$. Set 
$$ h_k = \left(\max(f,P(f_k)) - P(f_k)\right) \times \left( \max(P(f),-C) + C \right). $$
The function $h_k$ is positive, bounded, quasi-continuous, and satisfies 
 \[
\int_\Omega h_k (dd^c \max(P(f_j),-C))^n=0 \; \; \forall j \geq k.
\]
 Letting $j\to +\infty$ we obtain again by \cite[Theorem 4.26]{GZ17} 
\[
\int_{\{P(f_k)<f\}\cap \{P(f)>-C\}} (dd^c \max(P(f),-C))^n=0. 
\]
Next, letting $k\to +\infty$ we arrive at 
\[
\int_{\{P(f)<f\}\cap \{P(f)>-C\}} (dd^c \max(P(f),-C))^n=0. 
\]
By \cite[Theorem 4.1]{KH09} we then have 
\[
\int_{\{P(f)<f\}\cap \{P(f)>-C\}} (dd^c P(f))^n=0. 
\]
We finally let $C\to +\infty$ to obtain the result since $(dd^c P(f))^n$ does not charge pluripolar sets. 
\end{proof}
\begin{remark}
    We use the hypothesis $\psi \leq f$, for certain $\psi \in \mathcal{E}^a(\Omega)$, to ensure that $P(f) \in \mathcal{E}^a(\Omega)$. If $P(f) \in \mathcal{E}(\Omega)$ charges the pluripolar set $\{P(f)=-\infty\}$, then the same proof shows that the measure $(dd^c P(f))^n$ vanishes on  $\{P(f)<f\} \cap \{P(f) > -\infty\}$. 
\end{remark}
\section{High energy classes}\label{sec 3}
In this section, we study the existence of solutions to the following Dirichlet problem
\begin{equation}\label{CMAE}
    (dd^c u)^n = \mu \quad u \in \mathcal{E}_\chi(\Omega),
\end{equation}
 where $\mu$ is a positive Radon measure vanishing on pluripolar  sets. The equation \eqref{CMAE} has been studied by Benelkourchi \cite{Ben09,Ben15} in the case of convex or homogeneous weights. 
We extend these results to a special type of concave functions $\chi$.

First, we recall the class $\mathcal{E}_\chi(\Omega)$ defined in \cite{GZ07}; we denote by $\mathcal{W}^-$ (resp $\mathcal{W}^+$) the set of convex (resp concave) increasing functions  $\chi : \mathbb{R}^- \rightarrow \mathbb{R}^-$ such that $\chi(-\infty)=-\infty$. The set $\mathcal{W}^+_M$ consists of functions $\chi \in \mathcal{W}^+$ with the property 
$$ |t \chi'(t)| \leq M |\chi(t)|, \; \; \forall t \in \mathbb{R}^-. $$

Let $\chi \in \mathcal{W} := \mathcal{W}^- \cup \mathcal{W}^+$. The set $\mathcal{E}_\chi(\Omega)$ is defined by 
$$ \mathcal{E}_\chi(\Omega) = \left\{ u \in \PSH(\Omega) : \exists (u_j)_j \subset \mathcal{E}_0(\Omega), \; u_j \searrow u \; \; \text{and} \; \sup_j \int_\Omega -\chi \circ u_j (dd^c u_j)^n <+\infty  \right\}. $$
For $u\in \mathcal{E}_\chi(\Omega)$, we use the notation 
$$ E_\chi(u) = \int_\Omega -\chi(u) (dd^c u)^n. $$

The following lemma will be very helpful in the sequel.
\begin{lemma}[Lemma 2.2 in \cite{TV21}] \label{lem 3.1}
If $\chi \in \mathcal{W}^+_M$, then
$$ -\chi(ct) \leq -c^M\chi(t), $$
for all $t\leq 0$ and all $c\geq 1$.
\end{lemma}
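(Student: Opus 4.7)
The hypothesis $-t\chi'(t) \leq -M\chi(t)$ is a logarithmic derivative estimate, and the conclusion $-\chi(ct) \leq c^M(-\chi(t))$ is exactly what one obtains by integrating it along the dilation $t \mapsto ct$ from $1$ to $c$. My plan is to carry out this integration via a direct Gronwall-type comparison in one variable.

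Fix $t < 0$ (the case $t = 0$ follows by continuity from the inequality on negative arguments, since $\chi(0) = \lim_{s \to 0^-}\chi(s) \leq 0$), and for $c \geq 1$ set
\[
g(c) := -\chi(ct) + c^M \chi(t).
\]
Then $g(1) = 0$ and the claim reduces to $g(c) \leq 0$. Differentiating gives
\[
g'(c) = -t\chi'(ct) + M c^{M-1}\chi(t),
\]
and applying the hypothesis at the point $ct \in \mathbb{R}^-$ (namely $-(ct)\chi'(ct) \leq -M\chi(ct)$, then dividing by $c>0$) produces $-t\chi'(ct) \leq -M\chi(ct)/c$. Substituting, one finds
\[
g'(c) \leq \frac{M}{c}\bigl(-\chi(ct) + c^M\chi(t)\bigr) = \frac{M}{c}\, g(c).
\]
Now put $h(c) := g(c)/c^M$; a direct computation gives $h'(c) = c^{-M}(g'(c) - Mg(c)/c) \leq 0$, so $h$ is non-increasing on $[1,\infty)$. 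Since $h(1) = g(1) = 0$, this forces $h \leq 0$ and hence $g(c) \leq 0$, which is the required inequality.

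The only technical point I foresee is that $\chi'$ need not exist everywhere. Because $\chi$ is concave it admits a right-derivative at every point of $\mathbb{R}^-$ and is locally absolutely continuous; one may therefore either run the computation with the right-derivative throughout, or note that the differential inequality $g'(c) \leq (M/c) g(c)$ holds a.e., which combined with the absolute continuity of $h$ is enough to conclude monotonicity. I do not anticipate any substantial obstacle: the whole argument is a one-line ODE comparison.
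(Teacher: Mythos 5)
Your argument is correct. Note, however, that the paper itself gives no proof of this lemma: it is imported verbatim as Lemma~2.2 of \cite{TV21}, so there is no in-paper argument to compare against. Your Gronwall-type derivation is a valid, self-contained proof: the computation $g'(c)\leq \tfrac{M}{c}g(c)$ with $g(1)=0$ is right, and your handling of the nondifferentiability of the concave function $\chi$ (right-derivatives, or the a.e.\ inequality combined with local absolute continuity of $c\mapsto\chi(ct)$ on $[1,\infty)$ for fixed $t<0$) is exactly the care needed. One further merit worth pointing out: the more common route is to rewrite the hypothesis as $\tfrac{d}{dt}\bigl[-\log(-\chi(t))\bigr]\leq \tfrac{d}{dt}\bigl[-M\log(-t)\bigr]$ and integrate from $ct$ to $t$, but that requires knowing $\chi(t)<0$ for all $t<0$ --- a fact the paper only establishes \emph{after} this lemma (in Proposition~3.2(i), using the lemma itself). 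Your version never divides by $\chi$, so it avoids any such circularity.
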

The last lemma allows us to prove the following proposition:
\begin{prop}
Fix $\chi \in \mathcal{W}^+_M$.  
\begin{itemize}
    \item[(i)] We have $\chi(t) <0$ for all $t<0$; in particular
    $$ \mathcal{E}_\chi(\Omega) = \{u \in \mathcal{N}^a(\Omega) : \chi(u) \in L^1((dd^cu)^n) \}. $$
\item[(ii)] The set  $\mathcal{E}_\chi(\Omega)$ is a convex cone.
\end{itemize}

\end{prop}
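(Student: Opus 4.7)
For the sign claim in part (i), I would argue by contradiction using Lemma \ref{lem 3.1}: if $\chi(t_0)=0$ for some $t_0<0$, then for every $c\ge 1$ the inequality $-\chi(ct_0)\le -c^M\chi(t_0)=0$, combined with $\chi\le 0$, forces $\chi\equiv 0$ on $(-\infty,t_0]$, contradicting $\chi(-\infty)=-\infty$.

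For the characterization, the forward inclusion starts from a defining sequence $(u_j)\subset\mathcal{E}_0(\Omega)$ with $u_j\searrow u$ and $\sup_j \int_\Omega -\chi(u_j)(dd^c u_j)^n<+\infty$. Since $\mathcal{E}_0\subset\mathcal{F}\subset\mathcal{N}$, standard stability properties of $\mathcal{N}(\Omega)$ under decreasing limits with controlled total mass put $u\in\mathcal{N}(\Omega)$; weak convergence $(dd^c u_j)^n\to(dd^c u)^n$ along decreasing sequences combined with Fatou's lemma applied to the nonnegative integrand $-\chi(u_j)$ produces $\int_\Omega -\chi(u)(dd^c u)^n<+\infty$; finally, the strict negativity of $\chi$ on $\mathbb{R}^-$ established in the first step prevents $(dd^c u)^n$-mass from concentrating on the pluripolar set $\{u=-\infty\}$ (on which $\chi(u)=-\infty$), giving $u\in\mathcal{N}^a(\Omega)$. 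For the reverse inclusion, given $u\in\mathcal{N}^a(\Omega)$ with $\chi(u)\in L^1((dd^c u)^n)$, I would use the truncations $u_k:=\max(u,-k)$ and exploit the $\mathcal{N}^a$-localization $(dd^c u_k)^n|_{\{u>-k\}}=(dd^c u)^n|_{\{u>-k\}}$; controlling the contribution of the collar $\{u\le -k\}$ yields $\int_\Omega -\chi(u_k)(dd^c u_k)^n\le \int_\Omega -\chi(u)(dd^c u)^n$, and a standard regularization of each bounded $u_k$ by an $\mathcal{E}_0$-sequence supplies the required defining sequence.

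For part (ii), I would treat positive scalars and sums separately. If $u\in\mathcal{E}_\chi(\Omega)$ and $\lambda>0$, then $(dd^c\lambda u)^n=\lambda^n(dd^c u)^n$ and one has the pointwise bound $-\chi(\lambda u)\le\max(1,\lambda^M)(-\chi(u))$, which follows from Lemma \ref{lem 3.1} for $\lambda\ge 1$ and from the monotonicity of $\chi$ for $0<\lambda<1$; the characterization from (i) then gives $\lambda u\in\mathcal{E}_\chi(\Omega)$. For $u,v\in\mathcal{E}_\chi(\Omega)$, the containment $u+v\in\mathcal{N}^a(\Omega)$ follows from $\mathcal{N}^a$ being a convex cone in Cegrell's theory; the pointwise estimate $-\chi(u+v)\le 2^M(-\chi(u)-\chi(v))$ (combining $u+v\ge 2\min(u,v)$, Lemma \ref{lem 3.1}, and the identity $\chi\circ\min=\min\circ\chi$ coming from monotonicity of $\chi$) reduces the integrability of $\chi(u+v)$ against $(dd^c(u+v))^n$ to bounding the mixed integrals $\int_\Omega -\chi(u)(dd^c u)^k\wedge (dd^c v)^{n-k}$ obtained by expanding $(dd^c(u+v))^n$. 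These mixed energy bounds are the main technical obstacle and would be handled by the standard Cegrell-type H\"older/integration-by-parts inequalities, comparing mixed energies to the individual energies $E_\chi(u)$ and $E_\chi(v)$, after which the characterization in (i) concludes the proof.
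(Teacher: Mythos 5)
Your argument for the sign claim in (i) is correct and is essentially the paper's: the paper first reduces to showing $\chi(-1)=0$ and then spreads the vanishing to all of $\mathbb{R}^-$ via Lemma \ref{lem 3.1}, while your version gets $\chi\equiv 0$ on $(-\infty,t_0]$ directly, which is if anything cleaner. For the remaining claims the routes diverge mainly because the paper does not prove them: the identity $\mathcal{E}_\chi(\Omega)=\{u\in\mathcal{N}^a(\Omega):\chi(u)\in L^1((dd^cu)^n)\}$ is quoted from \cite[Corollary 3.3]{HH11}, and convexity is quoted from \cite[Proposition 4.3]{Ben09}, after which the cone property follows from the one-line computation $E_\chi(2u)=2^n\int_\Omega-\chi(2u)(dd^cu)^n\le 2^{n+M}E_\chi(u)$ together with $u+v=2\cdot\tfrac12(u+v)$. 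Note that this doubling trick lets the paper avoid ever expanding $(dd^c(u+v))^n$: the mixed energy estimates you identify as ``the main technical obstacle'' are hidden inside the cited convexity result rather than handled explicitly. Your scalar estimate $-\chi(\lambda u)\le\max(1,\lambda^M)(-\chi(u))$ matches the paper's.

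The one genuine gap is in your forward inclusion for the characterization in (i), where you claim $u\in\mathcal{N}(\Omega)$ follows from stability of $\mathcal{N}$ under decreasing limits ``with controlled total mass''. The total Monge--Amp\`ere mass is \emph{not} controlled: for a concave weight such as $\chi(t)=-(-t)^p$ with $0<p<1$ (which lies in $\mathcal{W}^+_M$ with $M=p$), the bound $\sup_j\int_\Omega-\chi(u_j)(dd^cu_j)^n<+\infty$ gives no bound on $\sup_j\int_\Omega(dd^cu_j)^n$, since $-\chi(u_j)$ degenerates to $0$ where $u_j$ is close to $0$; accordingly $\mathcal{E}_\chi(\Omega)\not\subset\mathcal{F}(\Omega)$ in general and your argument does not apply. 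The inclusion $\mathcal{E}_\chi(\Omega)\subset\mathcal{N}(\Omega)$ is true but requires a different argument exploiting $\chi(-\infty)=-\infty$; this is part of what \cite[Corollary 3.3]{HH11} provides. The rest of your sketch --- lower semicontinuity of $E_\chi$ along decreasing sequences, non-concentration of $(dd^cu)^n$ on $\{u=-\infty\}$ because $-\chi(u)=+\infty$ there, and the truncations $u_k=\max(u,-k)$ for the reverse inclusion --- is the standard and correct outline, though note that $u_k\notin\mathcal{E}_0(\Omega)$, so you still need to approximate each $u_k$ from above by something like $\max(u_k,j\rho)$ and control its energy before you have a legitimate defining sequence.
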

\begin{proof} $\phantom{}$
\begin{itemize}
    \item[(i)] By contradiction, assume that there is $t_0<0$ such that $\chi(t_0)=0$. If $t_0 \leq -1$ then $\chi(-1) = 0$. If $t_0 >-1$ then 
$$ -\chi(-1) = -\chi(t_0/|t_0|) \leq \frac{1}{|t_0|^M} \chi(t_0) = 0. $$
It follows that $\chi(-1) = 0$ in both cases. Let $t\in \mathbb{R}_-$. We have 
$$ -\chi(t) = -\chi((-t) \times (-1)) \leq - \max(|t|^M,|t|)\chi(-1) = 0. $$
We conclude that $\chi = 0$ and this is absurd because $\chi(-\infty) = -\infty$. 

The second affirmation follows from \cite[Corollary 3.3]{HH11}.
\item[(ii)]
The set $\mathcal{E}_\chi(\Omega)$ is convex by \cite[Proposition 4.3]{Ben09}. It suffices to prove that if $u\in \mathcal{E}_\chi(\Omega)$ then so is $2u$. This statement follows easily from the last lemma:
$$ E_\chi(2u) = 2^n \int_\Omega -\chi(2u) (dd^c u)^n \leq 2^{n+M} E_\chi(u). $$
\end{itemize}
\end{proof}
In \cite[Definition 4.1]{BGZ08}, the authors defined the following class for an increasing function $\chi$
$$ \tilde{\mathcal{E}}_\chi(\Omega) = \{ u \in \PSH^-(\Omega) : \int_0^{+\infty} t^n \chi'(-t) \capa_\Omega(u<-t) dt <+\infty \}. $$
They proved the inclusions 
$$ \tilde{\mathcal{E}}_\chi(\Omega) \subset \mathcal{E}_\chi(\Omega) \subset \tilde{\mathcal{E}}_{\tilde{\chi}}(\Omega),  $$
where $\tilde{\chi}(t) =\chi(t/2)$ \cite[Proposition 4.2]{BGZ08}. We have the following observation:
\begin{prop}\label{prop 3.3}
    If $\chi \in \mathcal{W}^+_M$, then $\tilde{\mathcal{E}}_\chi(\Omega) = \mathcal{E}_\chi(\Omega)$. 
\end{prop}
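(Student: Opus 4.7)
The plan is to prove the only nontrivial inclusion, namely $\mathcal{E}_\chi(\Omega) \subset \tilde{\mathcal{E}}_\chi(\Omega)$. Since \cite[Proposition 4.2]{BGZ08} already gives the chain $\tilde{\mathcal{E}}_\chi(\Omega) \subset \mathcal{E}_\chi(\Omega) \subset \tilde{\mathcal{E}}_{\tilde\chi}(\Omega)$ with $\tilde\chi(t) = \chi(t/2)$, it suffices to establish $\tilde{\mathcal{E}}_{\tilde\chi}(\Omega) \subset \tilde{\mathcal{E}}_\chi(\Omega)$. In terms of the defining integrals, this reduces to a pointwise comparison of the densities $\chi'(-t)$ and $\tilde\chi'(-t) = \tfrac{1}{2}\chi'(-t/2)$; concretely, I would prove the key estimate
$$ \chi'(-t) \leq C_M \, \chi'(-t/2), \qquad \forall\, t>0, $$
for a constant $C_M$ depending only on $M$.

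To establish this pointwise estimate, I would combine three ingredients. Setting $g(s) := -\chi(-s)$ for $s \geq 0$, the function $g$ is nonnegative, increasing, and convex (since $\chi$ is concave) with $g(0)=0$. The hypothesis $|t\chi'(t)|\leq M|\chi(t)|$ translates to $sg'(s)\leq Mg(s)$, and Lemma \ref{lem 3.1} translates to the doubling property $g(s)\leq 2^M g(s/2)$. Finally, convexity together with $g(0)=0$ forces the chord slope to lie below the right-endpoint tangent slope, so $g(s/2)\leq (s/2)g'(s/2)$. Chaining these three inequalities at $s = t$ yields
$$ g'(t) \leq \frac{M g(t)}{t} \leq \frac{M \cdot 2^M g(t/2)}{t} \leq \frac{M \cdot 2^M}{t} \cdot \frac{t}{2}\, g'(t/2) = M \cdot 2^{M-1}\, g'(t/2), $$
which is precisely $\chi'(-t)\leq M\cdot 2^{M-1}\chi'(-t/2) = M \cdot 2^M \, \tilde\chi'(-t)$.

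Plugging this into the defining integral and multiplying through by $t^n\,\capa_\Omega(u<-t)$ and integrating over $(0,+\infty)$ gives
$$ \int_0^{+\infty} t^n\,\chi'(-t)\,\capa_\Omega(u<-t)\,dt \; \leq \; M\cdot 2^M \int_0^{+\infty} t^n\,\tilde\chi'(-t)\,\capa_\Omega(u<-t)\,dt. $$
Hence finiteness of the right-hand side (i.e. $u \in \tilde{\mathcal{E}}_{\tilde\chi}(\Omega)$) forces finiteness of the left-hand side (i.e. $u\in \tilde{\mathcal{E}}_\chi(\Omega)$), completing the proof.

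The one delicate point is the chord-slope bound $g(s/2)\leq (s/2) g'(s/2)$, which I expect to be the main obstacle in the sense that it hinges on $g(0)=0$, i.e.\ $\chi(0)=0$. This normalization is consistent with the condition $|t\chi'(t)|\leq M|\chi(t)|$ at $t=0$ and with the use of $\chi$ as a weight throughout Section \ref{sec 3}. If for some reason one wanted to allow $\chi(0)<0$, the error term $g(0)$ produces an extra contribution of the form $\int_0^{+\infty} t^{n-1}\capa_\Omega(u<-t)\,dt$ which can be absorbed since $\capa_\Omega(u<-t)\to 0$ fast enough for $u\in \PSH^-(\Omega)$; I would handle this, if needed, by splitting the integral into a bounded region near $0$ (where the volume is finite) and a tail region dominated by $t^n \chi'(-t)\capa_\Omega(u<-t)$ via the condition $sg'(s)\leq Mg(s)$.
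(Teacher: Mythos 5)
Your route is genuinely different from the paper's. The paper never touches the derivative $\chi'$ or the capacity integral: it chains the inclusions of \cite[Proposition 4.2]{BGZ08} into $\mathcal{E}_\chi(\Omega)\subset\tilde{\mathcal{E}}_{\tilde\chi}(\Omega)\subset\mathcal{E}_{\tilde\chi}(\Omega)$ and then closes the loop with the soft energy comparison $E_\chi(u)\le 2^M E_{\tilde\chi}(u)$ coming from Lemma \ref{lem 3.1}, i.e.\ it works entirely at the level of the classes $\mathcal{E}_\chi$. Your argument instead proves the quantitative doubling estimate $\chi'(-t)\le M2^{M-1}\chi'(-t/2)$ and deduces $\tilde{\mathcal{E}}_{\tilde\chi}(\Omega)\subset\tilde{\mathcal{E}}_\chi(\Omega)$ directly. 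Under the normalization $\chi(0)=0$ your three-step chain (the $\mathcal{W}^+_M$ condition, Lemma \ref{lem 3.1} with $c=2$, and the chord--tangent inequality for the convex function $g$) is correct and gives a cleaner, more informative statement than the paper's.

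The gap is in your treatment of the case $\chi(0)<0$, which the paper's definition of $\mathcal{W}^+_M$ does not exclude (it only requires $\chi:\mathbb{R}^-\to\mathbb{R}^-$ concave, increasing, $\chi(-\infty)=-\infty$, and $|t\chi'(t)|\le M|\chi(t)|$). First, the pointwise estimate itself can fail there: for $\chi(t)=\min(t,-1)\in\mathcal{W}^+_1$ one has $\chi'(-3/2)=1$ while $\chi'(-3/4)=0$, so no constant $C_M$ works. Second, your proposed absorption of the error term is not valid: the extra contribution $g(0)\int_0^{+\infty}t^{n-1}\capa_\Omega(u<-t)\,dt$ diverges near $t=0$ for essentially every $u$, because $\capa_\Omega(u<-t)$ does \emph{not} tend to $0$ as $t\downarrow 0$ — it increases to $\capa_\Omega(\{u<0\})$, which is typically $+\infty$ (already for $u=\varepsilon\rho$ one has $\capa_\Omega(u<-t)\gtrsim t^{-n}$, so $t^{n-1}\capa_\Omega(u<-t)\gtrsim t^{-1}$ is non-integrable at $0$, even though the defining integral of $\tilde{\mathcal{E}}_\chi$ converges there because of the extra factor $t$). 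So either state $\chi(0)=0$ as a hypothesis (which is in any case implicitly needed elsewhere in Section \ref{sec 3}, e.g.\ for $\chi^{-1}$ in the proof of Theorem \ref{thm 3.5}), or fall back on the paper's class-level argument, which is insensitive to the value of $\chi(0)$.
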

\begin{proof}
    We have 
    $$ \mathcal{E}_\chi(\Omega) \subset \tilde{\mathcal{E}}_{\tilde{\chi}}(\Omega) \subset  \mathcal{E}_{\tilde{\chi}}(\Omega). $$
    So it suffices to prove that $\mathcal{E}_{\tilde{\chi}}(\Omega) \subset \mathcal{E}_\chi(\Omega)$. Let $u \in \mathcal{E}_{\tilde{\chi}}(\Omega)$. We have
  $$ E_\chi(u) = \int_\Omega -\chi(u) (dd^cu)^n \leq 2^M \int_\Omega -\chi(u/2) (dd^c u)^n = 2^M E_{\tilde{\chi}}(u) <+\infty. $$
\end{proof}
\begin{theorem}\label{thm 3.4}
Fix $\chi \in \mathcal{W}^+_M$, and let $u, v \in \mathcal{E}_\chi(\Omega)$. We have 
$$ \int_\Omega - \chi \circ u (dd^c v)^n \leq \lambda^{-n} E_\chi(2\lambda v) + 2^M\lambda^{-n}E_\chi(u),$$ for all $\lambda>0$.
\end{theorem}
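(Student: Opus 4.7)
\emph{Proof plan.} The plan is to reduce to bounded potentials and then split the integral along the set $\{u = 2\lambda v\}$, estimating each half by a different tool.

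First, by the very definition of $\mathcal{E}_\chi(\Omega)$ one can write $u,v$ as decreasing limits of sequences in $\mathcal{E}_0(\Omega)$ with uniformly bounded $\chi$-energies. By the continuity of $(dd^c\,\cdot\,)^n$ along decreasing sequences in $\mathcal{E}$ and monotone convergence applied to the non-negative integrand $-\chi(u_j)$, it suffices to prove the inequality for $u,v \in \mathcal{E}_0(\Omega)$. I would then decompose
\[
\int_\Omega -\chi(u)(dd^c v)^n \;=\; I_1 + I_2, \qquad I_1 := \int_{\{u \geq 2\lambda v\}}(\cdots),\quad I_2 := \int_{\{u < 2\lambda v\}}(\cdots).
\]
On $\{u \geq 2\lambda v\}$, monotonicity of $\chi$ gives $-\chi(u) \leq -\chi(2\lambda v)$, so
\[
I_1 \;\leq\; \int_\Omega -\chi(2\lambda v)(dd^c v)^n \;=\; (2\lambda)^{-n} E_\chi(2\lambda v) \;\leq\; \lambda^{-n} E_\chi(2\lambda v),
\]
where the equality follows from $(dd^c(2\lambda v))^n = (2\lambda)^n(dd^c v)^n$.

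On $\{u < 2\lambda v\}$ the key pointwise input is Lemma \ref{lem 3.1} applied with $c = 2$ and $t = u/2 \leq 0$, which gives $-\chi(u) \leq 2^M(-\chi(u/2))$. Combined with the reverse monotonicity bound $-\chi(u/2) \leq -\chi(u)$ (since $u \leq u/2 \leq 0$), the task reduces to showing
\[
\int_{\{u < 2\lambda v\}} -\chi(u/2)(dd^c v)^n \;\leq\; (2\lambda)^{-n} E_\chi(u).
\]
Setting $w := 2\lambda v$ and using the layer-cake representation $-\chi(u/2) + \chi(0) = \frac{1}{2}\int_0^\infty \chi'(-r/2)\,\mathbf{1}_{\{u < -r\}}\,dr$, Fubini reduces this to the per-level-set estimate
\[
(dd^c w)^n\bigl(\{u<w\} \cap \{u<-r\}\bigr) \;\leq\; (dd^c u)^n\bigl(\{u<-r\}\bigr), \qquad r > 0.
\]
To obtain this, I would decompose $\{u<w\}\cap\{u<-r\} = (\{u<-r\}\cap\{w>-r\}) \cup (\{u<w\}\cap\{w\leq -r\})$, apply the standard unweighted comparison principle to $u$ versus the auxiliary psh function $\max(w,-r)\in \mathcal{E}$ to control the first piece (after noting that $(dd^c\max(w,-r))^n$ coincides with $(dd^cw)^n$ on $\{w>-r\}$ and vanishes on $\{w<-r\}$), and use the comparison principle for $u$ versus $w$ together with the set inclusion $\{u<w,\,w\leq -r\}\subset\{u<-r\}$ to handle the second piece. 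Integrating the resulting per-level bound against $\chi'(-r/2)$ gives the desired estimate, and after the rescaling $(dd^c w)^n = (2\lambda)^n (dd^c v)^n$ one obtains $I_2 \leq 2^M(2\lambda)^{-n}E_\chi(u) \leq 2^M\lambda^{-n} E_\chi(u)$, which together with the bound on $I_1$ yields the theorem.

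\emph{Main obstacle.} The delicate step is making the per-level-set comparison tight enough. The standard unweighted comparison principle only gives $(dd^c w)^n(\{u<w\}) \leq (dd^c u)^n(\{u<w\})$, and a naive application on the union $\{u<w\}\cup\{u<-r\}$ produces a bound involving $(dd^cu)^n(\{u<w\})$, which after integration in $r$ would diverge against $\chi'(-r/2)$. The main technical point is thus the careful bookkeeping in the set decomposition above so that the two contributions combine, level by level, into the clean bound $(dd^cu)^n(\{u<-r\})$ that integrates to $E_\chi(u)$.
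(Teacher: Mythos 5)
Your overall architecture (layer-cake formula, Lemma \ref{lem 3.1}, rescaling, and the comparison principle) matches the paper's, and your bound for $I_1$ is fine; but the per-level-set estimate driving $I_2$ has a genuine gap, and it is precisely the obstacle you flag without actually overcoming. The comparison principle \cite[Corollary 3.13]{Ceg08} bounds $\int_{\{u<V\}}(dd^c V)^n$ by $\int_{\{u<V\}}(dd^c u)^n$ only over the \emph{entire} set $\{u<V\}$; it does not localize to proper Borel subsets of $\{u<V\}$. (For $u=\log|z|-\varepsilon$ and $V=\max(\log|z|,-A)$ in the unit disc one has $\{u<V\}=\Omega$, yet $(dd^c V)^n$ strictly dominates $(dd^c u)^n$ on the circle $\{|z|=e^{-A}\}$.) Both pieces of your decomposition need exactly such a localized version. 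For the first piece, comparing $u$ with $V=\max(w,-r)$ only yields
\[
(dd^c w)^n\bigl(\{u<-r\}\cap\{w>-r\}\bigr)\;\leq\;(dd^c u)^n\bigl(\{u<\max(w,-r)\}\bigr)\;=\;(dd^c u)^n\bigl(\{u<w\}\cup\{u<-r\}\bigr),
\]
whose $\{u<w\}$-part does not decay in $r$, so its integral against $\chi'(-r/2)\,dr$ diverges. For the second piece you would need the comparison inequality restricted to $\{u<w\}\cap\{w\leq-r\}\subsetneq\{u<w\}$, which is again unavailable; the set inclusion $\{u<w,\ w\leq-r\}\subset\{u<-r\}$ does not by itself transfer mass of $(dd^c w)^n$ to mass of $(dd^c u)^n$.

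The repair is the device the paper uses: merge the two conditions into a \emph{single} sublevel set below a psh function. From $u<w$ and $u<-r$ one gets $2u<w-r$, i.e.\ $\{u<w\}\cap\{u<-r\}\subset\{u<\tfrac{w}{2}-\tfrac{r}{2}\}$, and the latter is a full set of the form $\{u<V\}$ with $V=\tfrac{w}{2}-\tfrac{r}{2}$ psh, so the comparison principle applies to all of it, while $w\leq 0$ forces $\{u<\tfrac{w}{2}-\tfrac{r}{2}\}\subset\{u<-r/2\}$. This gives
\[
(dd^c w)^n\bigl(\{u<w\}\cap\{u<-r\}\bigr)\;\leq\;2^n\,(dd^c u)^n\bigl(\{u<-r/2\}\bigr),
\]
and the factor $2^n$ together with the dilation $r\mapsto r/2$ is exactly absorbed by the slack between $(2\lambda)^{-n}$ and $\lambda^{-n}$ in your estimate of $I_2$, so the stated constant survives. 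This is the paper's inclusion $\{u<-t\}\subset\{u<\lambda v-t/2\}\cup\{\lambda v<-t/2\}$ rewritten in your notation (the paper keeps the split at the level of the $t$-integral rather than splitting $\Omega$ first, which is why it never has to localize the comparison principle). With this substitution your argument closes; as written, it does not.
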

\begin{proof} The proof uses ideas from \cite[Theorem 5.1]{Ben09}.

Fix $\lambda>0$.
Using the fact that 
$$ (u < -t) \subset (u < \lambda v - t/2) \cup ( \lambda v < - t/2), $$
we get 
\begin{align*}
    \lambda^n \int_\Omega -\chi(u) (dd^c v)^n
    &=\int_\Omega -\chi(u)(dd^c \lambda v)^n \\ 
    &= \int_0^{+\infty}  (dd^c \lambda v)^n (\chi \circ u<-t) dt \\
    &= \int_0^{+\infty} \chi'(-t) (dd^c \lambda  v)^n(u<-t) dt\\
    &\leq \int_0^{+\infty} \chi'(-t) (dd^c \lambda v)^n(u<\lambda v-t/2) dt \\ &+ \int_0^{+\infty} \chi'(-t) (dd^c \lambda v)^n(\lambda v<-t/2) dt.
\end{align*}

In one hand
 $$\int_0^{+\infty} \chi'(-t) (dd^c\lambda v)^n(\lambda v<-t/2) dt \leq  \int_0^{+\infty} \chi'(-t) (dd^c 2\lambda  v)^n(2\lambda  v<-t)dt = E_\chi(2\lambda v). $$

 In the other hand, we have  by \cite[Corollary 3.13]{Ceg08}
 \begin{align*}
     \int_0^{+\infty} \chi'(-t) (dd^c \lambda  v)^n(u<\lambda  v-t/2) dt  &\leq \int_0^{+\infty} \chi'(-t) (dd^c u)^n(u<\lambda  v-t/2) dt \\
     &\leq \int_0^{+\infty} \chi'(-t) (dd^c u)^n(u<-t/2) dt \\
     &\leq \int_\Omega -\chi(2u) (dd^c u)^n \\
     &\leq 2^M  E_\chi(u).
 \end{align*}
\end{proof}

The following result 
corresponds to
\cite[Theorem 6]{Ben15} in the case of convex or homogeneous weight $\chi$ (see also \cite[Theorem A]{ACC12} for the case $\chi(t) = t$).
\begin{theorem}\label{thm 3.5}
Let $\mu$ be a positive Radon measure, and let $\chi \in \mathcal{W}^+_M$. The following conditions are equivalent:
\begin{itemize}
    \item[(1)]  there exists a unique function $ \phi \in \mathcal{E}_\chi(\Omega)$ such that $ \mu = (dd^c
\phi)^n$;
\item[(2)]$\chi(\mathcal{E}_\chi(\Omega)) \subset L^1(d\mu)$;
\item[(3)]  there exists a constant $C > 0$ such that
$$ \int_\Omega - \chi \circ \psi d\mu \leq C, $$ for all  $\psi \in \mathcal{E}_0(\Omega)$, 
$  
E_\chi(\psi) \leq 1; $
\item[(4)] there exists a positive constant $A$ such that
$$ \int_\Omega -\chi \circ \psi d\mu \leq A \max (1, E_\chi(\psi)), \; \; \forall \psi \in \mathcal{E}_0(\Omega).
$$
\end{itemize}
\end{theorem}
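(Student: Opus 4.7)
The plan is to establish the cycle (1) $\Rightarrow$ (2) $\Rightarrow$ (3) $\Rightarrow$ (4) $\Rightarrow$ (1), with uniqueness coming from the comparison principle recorded in Theorem 2.1 of the excerpt. The implication (1) $\Rightarrow$ (2) follows immediately from Theorem \ref{thm 3.4} applied with $v=\phi$: for any $u\in\mathcal{E}_\chi(\Omega)$ and $\lambda>0$,
$$\int_\Omega -\chi(u)\,d\mu = \int_\Omega -\chi(u)(dd^c\phi)^n \leq \lambda^{-n}E_\chi(2\lambda\phi) + 2^M\lambda^{-n}E_\chi(u) <\infty,$$
where the finiteness of $E_\chi(2\lambda\phi)$ comes from Lemma \ref{lem 3.1}, which tells us that scaling by a constant multiplies the $\chi$-energy by at most a polynomial factor.

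For (2) $\Rightarrow$ (3) I would argue by contradiction. Failure of (3) would produce $\psi_k\in\mathcal{E}_0(\Omega)$ with $E_\chi(\psi_k)\leq 1$ but $\int_\Omega -\chi(\psi_k)\,d\mu\geq 2^{k(M+1)}$. Set $u:=\sum_{k\geq 1} 2^{-k/M}\psi_k$, which is psh as the decreasing limit of its partial sums. From $u\leq 2^{-k/M}\psi_k$ and Lemma \ref{lem 3.1} we obtain $-\chi(u)\geq 2^{-k}(-\chi(\psi_k))$, whence $\int_\Omega -\chi(u)\,d\mu\geq 2^{kM}\to\infty$. The point requiring care is to check that $u\in\mathcal{E}_\chi(\Omega)$: I would expand $(dd^c u_N)^n$ of the partial sums into mixed Monge-Amp\`ere currents, bound each term by a H\"older-type inequality derived from Theorem \ref{thm 3.4}, and use the weak subadditivity $-\chi(a+b)\leq 2^M(-\chi(a)-\chi(b))$ coming from Lemma \ref{lem 3.1}, so that $\sup_N E_\chi(u_N)<\infty$. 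This contradicts (2).

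For (3) $\Rightarrow$ (4) I rescale: if $E_\chi(\psi)>1$, set $\lambda=E_\chi(\psi)^{1/n}\geq 1$. Since $\psi/\lambda\geq \psi$ one has $-\chi(\psi/\lambda)\leq -\chi(\psi)$, hence $E_\chi(\psi/\lambda)\leq \lambda^{-n}E_\chi(\psi)=1$; applying (3) to $\psi/\lambda$ and then using Lemma \ref{lem 3.1} with $c=\lambda$ to invert the scaling yields $\int_\Omega -\chi(\psi)\,d\mu\leq C\lambda^M = CE_\chi(\psi)^{M/n}$, absorbed into $A\max(1,E_\chi(\psi))$. For (4) $\Rightarrow$ (1) I would truncate $\mu$ by $\mu_j\leq\mu$ each dominated by the Monge-Amp\`ere measure of a bounded psh function and use the Ko\l odziej--Cegrell theory to solve $(dd^c\phi_j)^n=\mu_j$ with $\phi_j\in\mathcal{E}_0(\Omega)$; the comparison principle forces $(\phi_j)$ to be decreasing. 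Combining $E_\chi(\phi_j)=\int -\chi(\phi_j)\,d\mu_j\leq \int -\chi(\phi_j)\,d\mu$ with the scaling argument of the previous step applied to $\psi=\phi_j$ produces $E_\chi(\phi_j)\leq CE_\chi(\phi_j)^{M/n}$, hence a uniform bound. The limit $\phi=\lim\phi_j$ then belongs to $\mathcal{E}_\chi(\Omega)$, and continuity of the Monge-Amp\`ere operator along decreasing sequences (cited Theorem 4.5 of \cite{Ceg04}) yields $(dd^c\phi)^n=\mu$; uniqueness follows from the comparison principle in $\mathcal{N}^a(\Omega)$.

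I expect the main obstacle to be the uniform energy bound in (4) $\Rightarrow$ (1): a direct use of (4) on $\phi_j$ produces the circular estimate $E_\chi(\phi_j)\leq A\max(1,E_\chi(\phi_j))$, and breaking the circularity demands the scaling detour through (3) together with the quantitative control of Lemma \ref{lem 3.1}. A secondary delicacy appears in the construction $u=\sum 2^{-k/M}\psi_k$ used for (2) $\Rightarrow$ (3): concavity of $\chi$ rules out a direct triangle inequality, and one is forced to control the mixed Monge-Amp\`ere expansion of $(dd^c u_N)^n$ via the H\"older-type consequences of Theorem \ref{thm 3.4}.
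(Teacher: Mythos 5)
Your cycle matches the paper's, and your treatment of (1)$\Rightarrow$(2) is fine, but two of the implications have genuine gaps. The decisive one is (3)$\Rightarrow$(4): the domain rescaling $\psi\mapsto\psi/\lambda$ with $\lambda=E_\chi(\psi)^{1/n}$ combined with Lemma \ref{lem 3.1} gives $\int_\Omega-\chi(\psi)\,d\mu\leq C\lambda^M=C\,E_\chi(\psi)^{M/n}$, and this \emph{cannot} be absorbed into $A\max(1,E_\chi(\psi))$ unless $M\leq n$. Since $M$ is essentially the polynomial degree of the weight (for $\chi(t)=-(-t)^p$ with $p\geq1$ the optimal constant is $M=p$), the regime $M>n$ is precisely the ``high energy'' situation this section targets, and there your bound is strictly weaker than (4). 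The loss then propagates into (4)$\Rightarrow$(1): your a priori estimate $E_\chi(\phi_j)\leq C\,E_\chi(\phi_j)^{M/n}$ yields a uniform bound only when $M<n$ and is vacuous otherwise, so the decreasing sequence $(\phi_j)$ is not controlled. The paper avoids the loss by rescaling in the \emph{range} of $\chi$ rather than in its domain: it sets $\varepsilon=1/E_\chi(\psi)$, $f=\chi^{-1}(\varepsilon\chi(\psi))$, and works with the envelope $P(f)$, using Theorem \ref{thm 2.4} (the Monge--Amp\`ere measure of $P(f)$ lives on the contact set) together with Theorem \ref{thm 3.4} to obtain $E_\chi(P(f))\leq 2^{M+n+1}$ and hence the \emph{linear} bound $\int_\Omega-\chi(\psi)\,d\mu\leq 2^{M(M+1)}C\,E_\chi(\psi)$. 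For (4)$\Rightarrow$(1) the circularity you correctly identify is then broken not by a scaling detour but by replacing $\mu$ with $\tilde\mu=\mu/(2A)$, so the self-bound becomes $E\leq\tfrac12\max(1,E)$, which forces $E\leq1$ (this is the argument of \cite{Ben15} that the paper invokes).

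The second gap is the one you flag yourself in (2)$\Rightarrow$(3): membership of $u=\sum_k2^{-k/M}\psi_k$ in $\mathcal{E}_\chi(\Omega)$. Theorem \ref{thm 3.4} only controls integrals against the full Monge--Amp\`ere measure $(dd^cv)^n$ of a single function; it yields no estimate for mixed terms $dd^c\psi_{k_1}\wedge\cdots\wedge dd^c\psi_{k_n}$, and for concave $\chi$ no Cegrell-type H\"older inequality for such terms is established anywhere in the paper, so ``bound each term by a H\"older-type inequality derived from Theorem \ref{thm 3.4}'' is an assertion without backing. The paper bypasses mixed terms entirely: by Proposition \ref{prop 3.3}, $\mathcal{E}_\chi(\Omega)=\tilde{\mathcal{E}}_\chi(\Omega)$ is characterized by convergence of $\int_0^\infty t^n\chi'(-t)\capa_\Omega(u<-t)\,dt$, and for $u=\sum_j2^{-2j}u_j$ the inclusion $(u<-s)\subset\bigcup_j(u_j<-2^js)$, subadditivity of the capacity, a change of variables and the monotonicity of $\chi'$ reduce everything to a convergent geometric series. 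To repair your argument you would either need to adopt this capacity route (retuning your coefficients so that the relevant weights sum to one) or actually prove the quasi-triangle inequality for $E_\chi$ that your sketch implicitly assumes.
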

\begin{proof}
The implication (1) $ \Rightarrow$ (2) is obvious because $\mathcal{E}_\chi(\Omega)$ is a convex cone  and that
 $$ \int_\Omega -\chi \circ u (dd^c v)^n \leq E_\chi(u+v)<+\infty, \; \; \forall u, v \in \mathcal{E}_\chi(\Omega).  $$
 The proof of the implication (2) $\Rightarrow$ (3) resembles to that of (2) $\Rightarrow$ (3) in \cite[Theorem 6]{Ben15}; we repeat it for the reader's convenience. By contradiction, let $(u_j) \in \mathcal{E}_0(\Omega)$ such that $E_\chi(u_j) \leq 1$ and
$$ \int_\Omega -\chi(u_j) d\mu \geq 2^{3Mj}. $$
Consider the function 
$$ u:= \sum_j \frac{1}{2^{2j}} u_j. $$
Since
$ (u<-s) \subset \cup_j (u_j < -2^js), $
we get
$$ \capa_\Omega(u<-s) \leq \sum_j \capa_\Omega(u_j < -2^j s) $$
and therefore
 \begin{align*}
     &\int_0^\infty s^n \chi'(-s) \capa_{\Omega}(u<-s) ds \\
     &\leq \int_0^\infty s^n \chi'(-s) \sum_j \capa_{\Omega}(u_j<-2^js) ds \\
     &=\sum_j 1/2^{nj} \int_0^\infty (2^js)^n \chi'(-s) \capa_{\Omega}(u_j<-2^js) ds. 
 \end{align*}
 By the change of variables $t=2^js$, we obtain 

 \begin{align*}
     &\int_0^\infty s^n \chi'(-s) \capa_{\Omega}(u<-s) ds \\
     &= \sum_j 1/2^{nj} \int_0^\infty t^n \chi'(-2^{-j}t) \capa_{\Omega}(u_j<-t) 2^{-j}dt \\
    (*) &\leq  \sum_j 1/2^{(n+1)j} \int_0^\infty t^n \chi'(-t) \capa_{\Omega}(u_j<-t) dt \\
    &\leq \sum_j 1/2^{(n+1)j}<\infty.
     \end{align*}
$(*)$ is justified by the fact that $\chi$ is concave, so $\chi'$ is non-increasing.
That proves $u \in \tilde{\mathcal{E}}_\chi(\Omega)$ and therefore $u \in \mathcal{E}_\chi(\Omega)$ by Proposition \ref{prop 3.3}.

Since $u \leq 2^{-2j} u_j$, we get $\chi(u) \leq \chi(2^{-2j}u_j)$ because $\chi$ is increasing. It follows from Lemma \ref{lem 3.1} that $\chi(u) \leq 2^{-2Mj} \chi(u_j)$, and therefore 
$$ \int_\Omega -\chi(u) d\mu \geq  2^{-2Mj} \int_\Omega -\chi(u_j) d\mu \geq 2^{Mj}. $$
Which is in contradiction with $(2)$.

We move on to the proof of the implication (3) $\Rightarrow$ (4). 
Consider $\psi \in \mathcal{E}_0(\Omega)$ such that $E_\chi(\psi)\geq 1$. 
If $E_\chi(\psi) \leq 2^{n+1}$, then $E_\chi(1/2 \; \psi) \leq 1$ and therefore
$$ \int_\Omega -\chi(\psi) d\mu \leq 2^M \int_\Omega -\chi(1/2 \; \psi) d\mu \leq 2^M C. $$
Suppose 
$  E_\chi(\psi) \geq 2^{n+1}$, and denote by $\varepsilon = 1/E_\chi(\psi)$. For $f=\chi^{-1}(\varepsilon \chi(\psi))$,
   consider the envelope 
    $$ P(f) = \sup \{ h \in \PSH(\Omega) : h \leq f \}. $$
It is clear that  $P(f) \in \mathcal{E}_0(\Omega)$ since $f$ is upper-semicontinuous and satisfies  $f \geq \psi$. Theorem \ref{thm 2.4} implies
$$ \textit{1}_{\{P(f)<f\}} (dd^c P(f))^n = 0. $$
It thus follows that
\begin{align*}
     E_\chi(P(f)) &= \int_\Omega - \chi(P(f)) (dd^c P(f))^n \\   &= \int_{\{P(f) = f)\}} -\chi(f)(dd^c P(f))^n \\ 
     &= \varepsilon \int_\Omega -\chi(\psi) (dd^c P(f))^n.
\end{align*}
 Applying Theorem \ref{thm 3.4} for $\lambda = 1/2$, we get    
$$
    E_\chi(P(f))) \leq 
    2^n \varepsilon  E_\chi(P(f)) + 2^{M+n}.
$$
That implies
$$ E_\chi(P(f)) \leq \frac{ 2^{M+n}  }{1 - 2^n \varepsilon} \leq 2^{M+n+1},$$
and therefore 
$$ E_\chi(1/2^{M+1} \times P(f)) \leq 1. $$
Furthermore, since $P(f) \leq f = \chi^{-1}(\varepsilon \chi(\psi))$, we obtain 
$$
       \int_\Omega -\chi(\psi) d\mu \leq \varepsilon^{-1}\int_\Omega - \chi(P(f)) d\mu \leq 2^{M(M+1)} C E_\chi(\psi).$$
We conclude that,  for all $\psi \in \mathcal{E}_0(\Omega)$, we have
 $$ 
\int_\Omega -\chi(\psi) d\mu \leq 2^M C + 2^{M(M+1)} C E_\chi(\psi) \leq A\max(1, E_\chi(\psi)),
$$
where we have taken $A=2^{M(M+1)+1} C$.

For the implication (4) $\Rightarrow$ (1), setting $\tilde{\mu} = 1/(2 A)\; \mu$, we have 
$$ \int_\Omega -\chi(\psi) d\tilde{\mu} \leq 1/2 \max(1,E_\chi(\psi)), \; \; \forall \psi \in \mathcal{E}_0(\Omega).  $$
Since 
$$ \limsup_{t \rightarrow +\infty} \frac{\max(1,t)}{2t} = 1/2 <1, $$
we can construct a function $\tilde{u} \in \mathcal{E}_\chi(\Omega)$ such that $\tilde{\mu} = (dd^c \tilde{u})^n$ (the argument is the same as that of the proof of the implication (5) $\Rightarrow$ (1) in \cite[Theorem 6]{Ben15}). The  result follows by taking $u = (2A)^{1/n} \tilde{u}$. 
\end{proof}

\section{Definition of the Monge-Ampère operator $(\omega + dd^c.)^n$}\label{sec 4}
In this section, we study the operator $(\omega + dd^c .)^n$ for a smooth real $(1,1)$-form $\omega$ non necessarily closed. It follows from the work of Bedford and Taylor that this operator is well defined on $\PSH(\Omega,\omega) \cap L^{\infty}(\Omega) $\cite{BT76} (a detailed construction is given in \cite{KN15}). Here we extend this definition to unbounded functions $u \in \mathcal{E}(\Omega,\omega)$.
\subsection{The current $(dd^c .)^k$}
First, we show that the current $(dd^c u)^k$ is well defined for any $u \in \mathcal{E}(\Omega)$ and any $1\leq k\leq n$. The following proposition will be essential for our work.
\begin{prop}\label{prop 4.1}
Fix $p \in \{1,..,n\}$, and let $\alpha$ be a smooth $(p,p)-$form defined in a neighborhood of $\bar\Omega$. One can write
$$ 
\alpha = \sum_{j\in J } f_{j} T_{j},
$$
where
\begin{itemize}
    \item $J$ is a finite set;
    \item $(f_{j})_{j\in J}$ are smooth functions  with complex values;
    \item $ T_{j} = dd^c u_1^j \wedge .. \wedge dd^c u_{p}^j, $  where, for every $ j\in J$ and every $i=1,..,p$, $u_i^j$ is a smooth negative  plurisubharmonic function defined in a neighborhood of $\bar\Omega$. 
\end{itemize}
\end{prop}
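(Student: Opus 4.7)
The plan is to turn the decomposition into an explicit computation using the global holomorphic coordinates $z_1,\dots,z_n$ of $\mathbb{C}^n$. In these coordinates every smooth $(p,p)$-form on a neighborhood of $\bar\Omega$ can be written as
$$ \alpha \;=\; \sum_{J,K} a_{JK}\, dz_{j_1}\wedge d\bar z_{k_1}\wedge\cdots\wedge dz_{j_p}\wedge d\bar z_{k_p}, $$
the sum running over multi-indices $J=(j_1,\dots,j_p)$ and $K=(k_1,\dots,k_p)$, with smooth complex coefficients $a_{JK}$. Since $(1,1)$-forms commute under wedge product, it therefore suffices to decompose each elementary monomial in the required form.

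The first observation is that for every pair $(j,k)$ one has
$$ dd^c(z_j\bar z_k) \;=\; c\, i\, dz_j\wedge d\bar z_k $$
for a fixed nonzero constant $c$ depending only on the normalization of $d^c$. Iterating this identity rewrites each elementary monomial as a constant multiple of $dd^c(z_{j_1}\bar z_{k_1}) \wedge \cdots \wedge dd^c(z_{j_p}\bar z_{k_p})$. Hence
$$ \alpha \;=\; \sum_{J,K} \tilde a_{JK}\, dd^c(z_{j_1}\bar z_{k_1}) \wedge \cdots \wedge dd^c(z_{j_p}\bar z_{k_p}) $$
for suitably rescaled smooth complex coefficients $\tilde a_{JK}$.

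The potentials $z_j\bar z_k$ are complex, so I next invoke the polarization identity
$$ z_j\bar z_k \;=\; \tfrac14\bigl(\,|z_j+z_k|^2 - |z_j-z_k|^2 + i\,|z_j+iz_k|^2 - i\,|z_j-iz_k|^2\,\bigr), $$
which expresses each $z_j\bar z_k$ as a complex linear combination of four real smooth plurisubharmonic polynomials (for $j=k$ the identity degenerates to the plurisubharmonic function $|z_j|^2$). Substituting into the previous display and distributing the wedge products yields a finite sum of terms of the form $\lambda\, dd^c v_1 \wedge\cdots\wedge dd^c v_p$, where $\lambda\in\mathbb{C}$ and each $v_i$ is one of the above plurisubharmonic polynomials on $\mathbb{C}^n$. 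Multiplying by $\tilde a_{JK}$ then produces the claimed decomposition with smooth complex-valued prefactors.

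It remains to ensure the potentials are negative on a neighborhood of $\bar\Omega$. Since $\Omega$ is bounded, each $v_i$ is bounded above on a fixed compact neighborhood of $\bar\Omega$, so replacing $v_i$ by $u_i := v_i - C_i$ for a sufficiently large constant $C_i$ produces a smooth negative plurisubharmonic function without altering $dd^c v_i$. I do not anticipate a serious obstacle in this argument; the only subtlety is the polarization step, which ensures that the a priori complex potentials $z_j\bar z_k$ can be traded for honest real plurisubharmonic polynomials. Note also that no partition of unity is needed, because global holomorphic coordinates are available on all of $\mathbb{C}^n$.
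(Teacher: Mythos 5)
Your argument is correct and is essentially the paper's own proof: both reduce to the monomials $dz_j\wedge d\bar z_k$ and use the same polarization identity, producing the same potentials $|z_j\pm z_k|^2$, $|z_j\pm iz_k|^2$ (and $|z_j|^2$), shifted by a large constant to make them negative. Writing the polarization at the level of the potentials $z_j\bar z_k$ rather than of the forms themselves is only a cosmetic difference.
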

\begin{proof}
Write
$$ 
\alpha = i^{p^2} \sum_{|I|=|K|= p} \alpha_{IK} dz_I \wedge d\bar{z}_K. 
$$
It is thus enough to show that, for all $l,k$, the $(1,1)$-form $dz_l\wedge d\bar{z}_k$ can be written as a linear combination of closed positive $(1,1)$-forms with smooth coefficients. It is clear for $l=k$. For $ l \neq k$, it follows from  \cite[Lemma 1.4]{Dem12} that 
 \begin{align*}
 4 dz_l \wedge d\bar{z}_k &= ( dz_l + dz_k ) \wedge \overline{(dz_l + dz_k) } -  ( dz_l - dz_k ) \wedge \overline{(dz_l - dz_k) } \\ &+ i ( dz_l + idz_k ) \wedge \overline{(dz_l + i dz_k) } -i  ( dz_l -i dz_k ) \wedge \overline{(dz_l -i d z_k)}. 
 \end{align*}
Note that 
$$T_j = dd^c u^j_1 \wedge ... \wedge dd^c u_{p}^j,  $$
where the functions $u^j_1,...., u^j_{p}$ are taken as
$$ |z_k|^2 - R, \; |z_l \pm z_k|^2 - R \; \text{and}\; |z_l \pm i z_k|^2 - R, $$
where $ 1 \leq l,k \leq n, $ and  $R >0$ is large enough.
 This proof is thus complete. 
\end{proof}
We now define $(dd^c u)^k$ as a closed positive $(k,k)$-current when $u\in \mathcal{E}(\Omega)$. 
Let $ u \in \mathcal{E}(\Omega)$, and let $\alpha$ be a smooth $(n-k,n-k)$-form defined in a neighborhood of  $\bar\Omega$. We write
$$ \alpha = \sum f_j T_j, $$
where $f_j$ and $T_j$ are as in Proposition \ref{prop 4.1}. 
By \cite[Theorem 4.2]{Ceg04}, $ (dd^c u)^k \wedge T_j $  defines a Radon measure. We define  $(dd^c u)^k \wedge \alpha$ by 
$$ (dd^c u)^k \wedge \alpha = \sum f_j (dd^c u)^k \wedge T_j. $$
By  \cite[Lemma 3.2]{Ceg08}, if $ (v_s)_s \in \mathcal{E}(\Omega)$,  $ v_s \searrow u$, then the sequence $ \left( ( dd^c v_s )^k \wedge T_j\right)_s$ converges weakly to $ ( dd^c u )^k \wedge T_j$. It follows that
$$ 
 ( dd^c v_s )^k \wedge \alpha \longrightarrow  
(dd^c u)^k \wedge \alpha \quad \emph{weakly}.
$$
Suppose now that 
$$ \alpha = \sum f_j T_j = \sum g_l S_l, $$
for $f_j, g_l \in \mathcal{C}^\infty(\bar\Omega)$ and $T_j, S_l$ are as in Proposition \ref{prop 4.1}. We prove that 
$$ \sum f_j (dd^c u)^k \wedge T_j = \sum g_l (dd^c u)^k \wedge S_l. $$
Let $(u_s)_s $ be the standard regularization of $u$.  If $D \subset \subset \Omega$, then $ u_s \in \mathcal{E}(D)$ for $s$ large enough, and  $u_s \searrow u$ on $D$. Since 
$$\sum f_j (dd^c u_s)^k \wedge T_j = \sum g_l (dd^c u_s)^k \wedge S_l, \quad \forall s, $$
 \cite[Lemma 3.2]{Ceg08} gives 
$$ \sum f_j (dd^c u)^k \wedge T_j = \sum g_l (dd^c u)^k \wedge S_l. $$
That proves the following theorem.
\begin{theorem}\label{thm 4.2}
Let $ u \in \mathcal{E}(\Omega)$. For all $k \in \{1,..,n \}$, the current $(dd^c u)^k$ is well defined. Furthermore, if  $ (u_j)_j $  is a decreasing sequence in $ \mathcal{E}(\Omega) $ that converges to $ u$, then the sequence $ \left( ( dd^c u_j)^k \right)_j $ converges weakly  to $ ( dd^c u)^k$.  
\end{theorem}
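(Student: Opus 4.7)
The plan is to leverage Proposition 4.1 to reduce the problem to Cegrell's framework, where mixed Monge–Ampère wedge products of functions in $\mathcal{E}(\Omega)$ are already understood. Given a test $(n-k,n-k)$-form $\alpha$, I would write $\alpha = \sum_j f_j T_j$ with $T_j = dd^c u_1^j \wedge \cdots \wedge dd^c u_{n-k}^j$ a wedge product of smooth negative psh functions as supplied by Proposition 4.1, and \emph{define} the action of $(dd^c u)^k$ on $\alpha$ by
$$ \langle (dd^c u)^k, \alpha \rangle := \sum_j \int_\Omega f_j \, (dd^c u)^k \wedge T_j, $$
where each summand is meaningful because $(dd^c u)^k \wedge T_j$ is a well-defined Radon measure by \cite[Theorem 4.2]{Ceg04}.

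The central point is then to check independence of the decomposition: if $\sum_j f_j T_j = \sum_l g_l S_l$, I need to verify that $\sum_j f_j (dd^c u)^k \wedge T_j = \sum_l g_l (dd^c u)^k \wedge S_l$ as measures. My approach is to work on an arbitrary $D \subset\subset \Omega$ and take the standard convolution regularization $(u_s)_s$ of $u$, which for $s$ large is smooth on a neighborhood of $\bar D$ and decreases to $u$ on $D$; in particular $u_s \in \mathcal{E}(D)$. Since $u_s$ is smooth, the identity of forms is tautological: $\sum_j f_j (dd^c u_s)^k \wedge T_j = \sum_l g_l (dd^c u_s)^k \wedge S_l$. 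Passing to the limit $s \to +\infty$ via the mixed-current convergence result \cite[Lemma 3.2]{Ceg08} (applied on $D$ to the decreasing sequence $u_s \searrow u$) yields the required equality on $D$, and exhausting $\Omega$ by such subdomains finishes the well-definedness.

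For the continuity along decreasing sequences, I would fix $\alpha = \sum_j f_j T_j$ and observe that for each fixed $j$, \cite[Lemma 3.2]{Ceg08} already guarantees $(dd^c u_s)^k \wedge T_j \rightharpoonup (dd^c u)^k \wedge T_j$ weakly when $u_s \searrow u$ in $\mathcal{E}(\Omega)$. Multiplication by the smooth bounded function $f_j$ preserves weak convergence, and the sum over the finite index set is immediate. Hence $(dd^c u_s)^k \rightharpoonup (dd^c u)^k$ as currents.

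The step I expect to be the most delicate is the well-definedness argument, since a priori the decomposition supplied by Proposition 4.1 is highly non-unique (two different algebraic rewrites of $\alpha$ could produce wildly different families of $T_j$'s). The regularization trick circumvents this by collapsing everything to the smooth case where the identity is trivial, and then using Cegrell's weak stability to transfer it to the singular limit; the only subtlety is ensuring $u_s \in \mathcal{E}(D)$ on relatively compact subdomains, which follows because smooth psh functions are automatically in $\mathcal{E}_0(D) \subset \mathcal{E}(D)$ after translation to vanish on $\partial D$.
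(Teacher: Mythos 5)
Your proposal is correct and follows essentially the same route as the paper: define $(dd^c u)^k$ against test forms via the decomposition of Proposition \ref{prop 4.1}, invoke \cite[Theorem 4.2]{Ceg04} for each term, establish independence of the decomposition by regularizing $u$ on relatively compact subdomains and passing to the limit with \cite[Lemma 3.2]{Ceg08}, and deduce continuity along decreasing sequences from the same lemma. No substantive differences.
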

\subsection{The classes $\mathcal{K}(\Omega,\omega,\phi)$}
Let $\omega$ be a smooth real $(1,1)$-form  defined in a neighborhood of $\bar{\Omega}$. We denote by $\mathcal{P}_\omega(\Omega)$ the set of $\rho \in \mathcal{C}^2(\bar{\Omega}) \cap \PSH(\Omega)$ such that $\rho=0$ on $\partial \Omega$ and $\omega \leq dd^c \rho$.

In the sequel, we denote by $\phi$ a psh maximal function in $\mathcal{E}(\Omega) \cap \mathcal{C}^0(\bar{\Omega})$; the term maximal means that  $(dd^c \phi)^n=0$. Recall that for  $\mathcal{K}(\Omega) \in \{ \mathcal{E}(\Omega)$, $\mathcal{F}(\Omega)$, $\mathcal{E}_p(\Omega)$, $\mathcal{E}_\chi(\Omega),\mathcal{N}(\Omega)\}$, the set $\mathcal{K}(\Omega, \phi)$ is defined by 
$$ u \in \mathcal{K}(\Omega, \phi) \Leftrightarrow u \in \PSH(\Omega) \; \text{and} \; \phi \geq u \geq \phi + \tilde{u}, $$
for $\tilde{u} \in \mathcal{K}(\Omega)$ \cite{Ceg98,Aha07,Ceg08,Ben15}.

Fix $\rho \in \mathcal{P}_\omega(\Omega)$.
We define the set
$\mathcal{K}(\Omega,\omega, \phi)$ by 
$$ u \in  \mathcal{K}(\Omega,\omega, \phi) \Leftrightarrow u \in \PSH(\Omega,\omega) \; \text{and} \;  u + \rho \in \mathcal{K}(\Omega,\phi). $$

\begin{remark}
Note that the last definition does not depend on $\rho$.
Indeed,
let $\rho' \in \mathcal{P}_\omega(\Omega) $. If $u+\rho \in \mathcal{K}(\Omega,\phi)$, then we have 
$$\phi + \tilde{u} \leq u+\rho  \leq \phi$$
 for some $\tilde{u}\in \mathcal{K}(\Omega)$.
 Since $\phi$ is maximal, we get $u+\rho' \leq \phi$. In the other hand 
 $$ u+\rho' \geq u+\rho+\rho' \geq \phi +\tilde{u}+\rho'.$$
That means $u+\rho' \in \mathcal{K}(\Omega,\phi)$.
\end{remark}
\begin{remark}
    Taking $\phi = 0$, the class $\mathcal{K}(\Omega,\omega) := \mathcal{K}(\Omega,\omega,0)$ is a generalization of the usual Cegrell class $\mathcal{K}(\Omega)$. 
\end{remark}
We have the following observation:
\begin{prop}
$$\mathcal{K}(\Omega,\omega,\phi) \subset \mathcal{E}(\Omega,\omega). $$ 
\end{prop}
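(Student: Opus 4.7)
The plan is to unwind the two-layer definition of $\mathcal{K}(\Omega,\omega,\phi)$ and reduce the statement to the corresponding $\omega$-free inclusion $\mathcal{K}(\Omega,\phi) \subset \mathcal{E}(\Omega,0)$ at the level of ordinary Cegrell classes. After fixing once and for all a barrier $\rho \in \mathcal{P}_\omega(\Omega)$, both $\mathcal{K}(\Omega,\omega,\phi)$ and $\mathcal{E}(\Omega,\omega) = \mathcal{E}(\Omega,\omega,0)$ require membership in $\PSH(\Omega,\omega)$, so the only content is to check that $u+\rho \in \mathcal{E}(\Omega,0)$ whenever $u+\rho \in \mathcal{K}(\Omega,\phi)$.

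To execute this, take $u \in \mathcal{K}(\Omega,\omega,\phi)$; by definition of $\mathcal{K}(\Omega,\phi)$ there exists $\tilde u \in \mathcal{K}(\Omega)$ with
$$\phi + \tilde u \; \leq \; u + \rho \; \leq \; \phi.$$
The upper bound, together with $\phi \leq 0$, gives $u+\rho \leq 0$. For the lower bound I would invoke the inclusion $\mathcal{K}(\Omega) \subset \mathcal{E}(\Omega)$, which holds for each of the five admissible choices of $\mathcal{K}$: for $\mathcal{E}_0$, $\mathcal{E}_p$ and $\mathcal{F}$ this is the chain recorded in Section \ref{sec 2}; for $\mathcal{N}(\Omega)$ it is built into the definition; and for $\mathcal{E}_\chi(\Omega)$ it follows from the identification $\mathcal{E}_\chi(\Omega) = \{u \in \mathcal{N}^a(\Omega) : \chi(u) \in L^1((dd^c u)^n)\}$ recorded in Section \ref{sec 3}. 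Thus $\tilde u \in \mathcal{E}(\Omega)$, and since $\phi \in \mathcal{E}(\Omega)$ as well and $\mathcal{E}(\Omega)$ is stable under sums, the function $v:=\phi+\tilde u$ lies in $\mathcal{E}(\Omega)$. The inequality $0 \geq u+\rho \geq v$ with $v \in \mathcal{E}(\Omega)$ is exactly the condition $u+\rho \in \mathcal{E}(\Omega,0)$, hence $u \in \mathcal{E}(\Omega,\omega)$.

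The one step that is not a direct definition chase is the stability of $\mathcal{E}(\Omega)$ under addition, which is needed to obtain $\phi+\tilde u \in \mathcal{E}(\Omega)$. This is a standard property of Cegrell's $\mathcal{E}$ class, provable by taking products (minima) of the local approximating sequences in $\mathcal{E}_0(\Omega)$ provided by the definitions of $\phi$ and $\tilde u$, and invoking the mass bound in \cite[Theorem 4.2]{Ceg04}. Beyond this auxiliary fact, I do not expect any genuine obstacle: the whole argument is essentially a careful chase through the nested definitions of $\mathcal{K}(\Omega,\omega,\phi)$ and $\mathcal{E}(\Omega,\omega)$.
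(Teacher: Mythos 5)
Your proposal is correct and follows essentially the same route as the paper: fix $\rho\in\mathcal{P}_\omega(\Omega)$, unwind the definition to get the sandwich $0\geq\phi\geq u+\rho\geq\phi+\tilde u$ with $\tilde u\in\mathcal{K}(\Omega)$, and conclude $u+\rho\in\mathcal{E}(\Omega)$. The only difference is that you spell out the two background facts the paper leaves implicit, namely $\mathcal{K}(\Omega)\subset\mathcal{E}(\Omega)$ and the stability of $\mathcal{E}(\Omega)$ under addition, so that $\phi+\tilde u\in\mathcal{E}(\Omega)$; this is a reasonable amount of extra detail, not a different argument.
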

\begin{proof}
Let $\rho \in \mathcal{P}_\omega(\Omega)$. 
If $u \in \mathcal{K}(\Omega,\omega,\phi)$, then $u+\rho \in \mathcal{K}(\Omega,\phi)$.  It follows that
$$  0 \geq \phi \geq u+\rho \geq \phi + \tilde{u}, $$
for some $\tilde{u} \in \mathcal{K}(\Omega)$. Hence
  $u \in \mathcal{E}(\Omega,\omega)$.
\end{proof}

\subsection{Basic properties} 
In this subsection we study some basic properties of the operator $(\omega + dd^c .)^n$. 
We first show  that the set $\mathcal{E}(\Omega,\omega)$ is always a non-empty subset of $\PSH(\Omega,\omega)$.
\begin{prop}
$$ \PSH^-(\Omega, \omega) \cap L^\infty_{loc}(\Omega) \subset \mathcal{E}(\Omega,\omega). 
$$
\end{prop}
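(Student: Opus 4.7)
The plan is to unwind the definitions. By construction $u\in\mathcal{E}(\Omega,\omega)=\mathcal{E}(\Omega,\omega,0)$ iff $u\in\PSH(\Omega,\omega)$ and $u+\rho\in\mathcal{E}(\Omega,0)$ for some $\rho\in\mathcal{P}_\omega(\Omega)$; I will choose $\rho$ strictly negative inside $\Omega$ by taking $\rho=A\rho_0$ for the defining function $\rho_0$ and $A$ sufficiently large. The first condition is given, so I focus on $u+\rho\in\mathcal{E}(\Omega,0)$. A direct computation gives $dd^c(u+\rho)=dd^cu+dd^c\rho\geq-\omega+\omega=0$, so $u+\rho\in\PSH(\Omega)$; moreover $u\leq 0$ by assumption and $\rho\leq 0$ by the maximum principle, giving $u+\rho\leq 0$. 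Taking $\tilde u:=u+\rho$ itself as the minorant in the definition of $\mathcal{K}(\Omega,0)$, it remains only to prove $u+\rho\in\mathcal{E}(\Omega)$.

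Since belonging to $\mathcal{E}(\Omega)$ is a local condition, I only need, for each $z_0\in\Omega$, a neighborhood $V$ of $z_0$ and a non-increasing sequence $(u_j)\subset\mathcal{E}_0(\Omega)$ with $u_j\to u+\rho$ on $V$ and $\sup_j\int_\Omega(dd^cu_j)^n<+\infty$. The strategy is to replace $u+\rho$ by a globally bounded modification that agrees with it on a neighborhood of $z_0$. Pick $V\Subset W\Subset\Omega$ neighborhoods of $z_0$; by local boundedness there exists $M>0$ with $-M\leq u+\rho\leq 0$ on $\bar W$, and by the strict negativity of $\rho$ there exists $\delta>0$ with $\rho\leq-\delta$ on $\bar W$. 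For any $C>M/\delta$ set
\[
v:=\max(u+\rho,\;C\rho)\in\PSH^-(\Omega).
\]
On $W$ one has $C\rho\leq -C\delta<-M\leq u+\rho$, hence $v\equiv u+\rho$ on $W$, and in particular on $V$. Globally $C\min_{\bar\Omega}\rho\leq v\leq 0$, so $v$ is bounded, and $v\to 0$ along $\partial\Omega$ since $C\rho\to 0$ there.

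By the classical Chern--Levine--Nirenberg estimates, bounded plurisubharmonic functions with vanishing boundary values on a bounded strictly pseudoconvex domain have finite total Monge--Amp\`ere mass, so $v\in\mathcal{E}_0(\Omega)$. The constant sequence $u_j:=v$ then fulfils every requirement: it is non-increasing, lies in $\mathcal{E}_0(\Omega)$, equals $u+\rho$ on $V$, and has uniformly bounded total mass $\int_\Omega(dd^cv)^n<+\infty$. This yields $u+\rho\in\mathcal{E}(\Omega)$ at $z_0$; arbitrariness of $z_0$ then gives $u+\rho\in\mathcal{E}(\Omega)$, hence $u\in\mathcal{E}(\Omega,\omega)$. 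The main point requiring care is the calibration of $C$: it must be large enough that ``$\max$ with $C\rho$'' leaves $u+\rho$ unchanged on $V$ while simultaneously forcing global boundedness and vanishing boundary values for the resulting function.
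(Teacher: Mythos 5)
Your reduction to showing $u+\rho\in\mathcal{E}(\Omega)$, and the idea of replacing $u+\rho$ near a given point by the global competitor $v=\max(u+\rho,C\rho)$, are both sound; note that the paper itself simply quotes the known inclusion $\PSH^-(\Omega)\cap L^\infty_{loc}(\Omega)\subset\mathcal{E}(\Omega)$ from \cite{Ceg04} at this point, so you are in effect re-proving that inclusion. The gap is the sentence asserting that, by Chern--Levine--Nirenberg, bounded plurisubharmonic functions with vanishing boundary values on a bounded strictly pseudoconvex domain have finite total Monge--Amp\`ere mass. That statement is false: the CLN inequality only bounds $\int_K(dd^c w)^n$ for $K\Subset\Omega$, with a constant that blows up as $K$ exhausts $\Omega$. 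For instance, on the unit ball with defining function $\rho_0=|z|^2-1$, the function $w=\sum_{j\ge 1}\max(4^j\rho_0,-2^{-j})$ is continuous on $\bar\Omega$, plurisubharmonic, satisfies $-1\le w\le 0$ and $w=0$ on $\partial\Omega$, yet $\int_\Omega(dd^cw)^n\ge\int_\Omega\bigl(dd^c\max(4^j\rho_0,-2^{-j})\bigr)^n\gtrsim 2^{(2n-3)j}\to+\infty$ when $n\ge 2$ (an analogous example with annular Riesz masses works for $n=1$). This is precisely why the condition $\int_\Omega(dd^cu)^n<+\infty$ appears as a separate requirement in the definition of $\mathcal{E}_0(\Omega)$ rather than following from boundedness and zero boundary values.

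Your particular $v$ does in fact lie in $\mathcal{E}_0(\Omega)$, but for a reason you must supply: since $C\rho\le v\le 0$ and both functions tend to $0$ on $\partial\Omega$, one has $(1+\varepsilon)C\rho<v$ everywhere in $\Omega$, and the Bedford--Taylor comparison principle \cite[Theorem 4.1]{BT82} yields
$$\int_\Omega(dd^cv)^n\le (1+\varepsilon)^nC^n\int_\Omega(dd^c\rho)^n<+\infty,$$
the right-hand side being finite because $\rho$ is $\mathcal{C}^2$ up to $\bar\Omega$. With that substitution (or by simply invoking Cegrell's inclusion $\PSH^-(\Omega)\cap L^\infty_{loc}(\Omega)\subset\mathcal{E}(\Omega)$, as the paper does), your argument closes.
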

\begin{proof}
Let $ \rho \in \mathcal{P}_\omega(\Omega) $. If $ u \in \PSH^-(\Omega,\omega)\cap L^\infty_{loc}(\Omega), $ then $ u + \rho \in \PSH^-(\Omega) \cap L^\infty_{loc}(\Omega) \subset \mathcal{E}(\Omega) $  and therefore $ u \in \mathcal{E}(\Omega,\omega)$.
\end{proof}
Functions in $\mathcal{E}(\Omega,\omega)$ are not necessarily negative. However, we have the following observation:
\begin{prop}
There is a constant $C$ such that $ u \leq C, $   for every $u \in \mathcal{E}(\Omega,\omega)$.
\end{prop}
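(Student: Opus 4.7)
The plan is to reduce the claim to a maximum-principle argument for the fixed reference function $\rho \in \mathcal{P}_\omega(\Omega)$. First I would fix such a $\rho$, which exists by the standing assumption (e.g.\ a large multiple of the defining function of $\Omega$). Since $\rho \in \mathcal{C}^2(\bar{\Omega}) \cap \PSH(\Omega)$ and $\rho = 0$ on $\partial \Omega$, the plurisubharmonic maximum principle forces $\rho \leq 0$ on $\Omega$, and $\rho$ is bounded below on $\bar{\Omega}$ by continuity. I set
$$ C := \sup_{\bar\Omega}(-\rho) = -\inf_{\bar\Omega}\rho < +\infty. $$

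Now given $u \in \mathcal{E}(\Omega,\omega) = \mathcal{E}(\Omega,\omega,0)$, the definition of the class says $u + \rho \in \mathcal{E}(\Omega, 0)$, i.e.\ there is $\tilde u \in \mathcal{E}(\Omega)$ with $0 \geq u + \rho \geq \tilde u$ (recall that $\mathcal{E}(\Omega) \subset \PSH^-(\Omega)$, so the upper bound by the maximal function $\phi = 0$ is exactly the negativity condition). In particular $u + \rho \leq 0$ everywhere on $\Omega$, hence
$$ u \leq -\rho \leq C, $$
and this uniform bound is independent of the choice of $u \in \mathcal{E}(\Omega,\omega)$. Note that the preceding remark in the paper guarantees this conclusion is independent of the particular $\rho \in \mathcal{P}_\omega(\Omega)$ used, although a single $\rho$ suffices for the proof.

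There is no real obstacle here: the whole content is that the defining condition $u + \rho \in \mathcal{E}(\Omega)$ is one-sided (non-positivity of $u + \rho$) plus the boundedness of $\rho$. The only point that deserves a line of justification is $\rho \leq 0$, which follows either from the maximum principle applied on $\Omega$ or from Hopf's lemma combined with the boundary condition.
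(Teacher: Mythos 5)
Your proof is correct and is essentially the paper's argument: fix $\rho \in \mathcal{P}_\omega(\Omega)$, use that $u+\rho \in \mathcal{E}(\Omega) \subset \PSH^-(\Omega)$ forces $u+\rho \leq 0$, and take $C = \sup(-\rho)$, which is finite since $\rho$ is continuous on $\bar\Omega$. The extra observation that $\rho \leq 0$ is harmless but not needed for the bound.
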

\begin{proof}
Let $ \rho \in \mathcal{P}_\omega(\Omega)$.
If $ u \in \mathcal{E}(\Omega,\omega)$, then $ u + \rho \leq 0$ and therefore $ u \leq \sup ( - \rho)$. We take $C = \sup ( - \rho)$.
\end{proof}
Other basic facts are given in the following proposition:
\begin{prop} $\phantom{}$
\begin{itemize}
    \item  $\mathcal{E}(\Omega,0) = \mathcal{E}(\Omega)$.
    \item Let $\omega'$ be another smooth $(1,1)$-form defined in a neighborhood of $\bar{\Omega}$. We have
    $$ \omega\leq \omega' \Rightarrow \mathcal{E}(\Omega,\omega) \subset \mathcal{E}(\Omega,\omega'). $$
    In particular, if $\omega \geq 0$ then $\mathcal{E}(\Omega) \subset \mathcal{E}(\Omega,\omega)$. 
\end{itemize}
\end{prop}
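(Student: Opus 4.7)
Both assertions will be reduced to the remark immediately preceding the proposition, which shows that membership in $\mathcal{E}(\Omega,\omega,\phi)$ can be tested with any single choice of $\rho \in \mathcal{P}_\omega(\Omega)$. Nothing more than an adroit selection of $\rho$ is needed.

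\emph{First bullet.} I would take $\rho = 0$, which clearly belongs to $\mathcal{P}_0(\Omega)$. With this choice, the condition $u \in \mathcal{E}(\Omega,0)$ unfolds to $u \in \PSH(\Omega)$ together with $u = u+0 \in \mathcal{E}(\Omega,\phi=0)$. The latter class is, by definition of $\mathcal{K}(\Omega,\phi)$ for $\phi = 0$, the set of negative psh functions bounded from below by some $\tilde u \in \mathcal{E}(\Omega)$, and the standard stability property of Cegrell's class under passing to a larger negative psh function (recalled in Section~\ref{sec 2}) identifies this class with $\mathcal{E}(\Omega)$ itself.

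\emph{Second bullet.} The key observation is that the hypothesis $\omega \leq \omega'$ yields simultaneously $\PSH(\Omega,\omega) \subset \PSH(\Omega,\omega')$ and, in the opposite direction, $\mathcal{P}_{\omega'}(\Omega) \subset \mathcal{P}_\omega(\Omega)$: any $\rho'$ with $dd^c \rho' \geq \omega'$ a fortiori satisfies $dd^c \rho' \geq \omega$. Given $u \in \mathcal{E}(\Omega,\omega)$, I would pick any $\rho' \in \mathcal{P}_{\omega'}(\Omega)$. Such a $\rho'$ does exist, using the strict pseudoconvexity of $\Omega$: the $\mathcal{C}^2$ strongly plurisubharmonic defining function $\rho_0$ supplied by the definition of $\Omega$ vanishes on $\partial \Omega$, and since $\omega'$ is smooth on $\bar\Omega$ while $dd^c \rho_0$ is uniformly positive on $\bar\Omega$, the function $A \rho_0$ lies in $\mathcal{P}_{\omega'}(\Omega)$ for $A$ large enough. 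Because this $\rho'$ also belongs to $\mathcal{P}_\omega(\Omega)$, the independence remark applied to $u \in \mathcal{E}(\Omega,\omega)$ gives $u + \rho' \in \mathcal{E}(\Omega,\phi=0) = \mathcal{E}(\Omega)$. Together with $u \in \PSH(\Omega,\omega')$, this is exactly $u \in \mathcal{E}(\Omega,\omega')$. The particular case $\mathcal{E}(\Omega) \subset \mathcal{E}(\Omega,\omega)$ when $\omega \geq 0$ is then just the instance $\omega \leftarrow 0$, $\omega' \leftarrow \omega$, combined with the first bullet.

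I do not anticipate a genuine obstacle: the argument is a short bookkeeping once the definition is read with a convenient $\rho$. The only step that deserves a line of comment is the non-emptiness of $\mathcal{P}_{\omega'}(\Omega)$, where the strict pseudoconvexity of $\Omega$ is essential.
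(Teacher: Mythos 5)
Your proposal is correct and follows essentially the same route as the paper, whose entire proof consists of the two choices you make: $\rho=0$ for the first bullet and $\rho\in\mathcal{P}_{\omega'}(\Omega)\subset\mathcal{P}_{\omega}(\Omega)$ for the second. You merely spell out the details the paper leaves implicit (the identification $\mathcal{E}(\Omega,\phi=0)=\mathcal{E}(\Omega)$ via the stability of $\mathcal{E}(\Omega)$ under passing to larger negative psh functions, and the non-emptiness of $\mathcal{P}_{\omega'}(\Omega)$ from strict pseudoconvexity), which is fine.
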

\begin{proof}
For the first statement, we take $\rho = 0$ in the definition of $\mathcal{E}(\Omega,0)$. To prove the second statement, we take $ \rho \in \mathcal{P}_{\omega'}({\Omega})$. 
\end{proof}
\subsection{Definition of the Monge-Ampère operator $( \omega + dd^c.)^n$ on $\mathcal{E}(\Omega,\omega)$} \phantom{} \\
Let $u \in \mathcal{E}(\Omega, \omega)$, and let $ \rho \in \mathcal{P}_\omega(\Omega)$. Write
$$ ( \omega + dd^c u)^n = ( \omega - dd^c \rho + dd^c ( u+ \rho) )^n = \sum_{k=0}^n \binom{n}{k} (-1)^{n-k} ( dd^c (u+\rho))^k \wedge \gamma^{n-k}, $$
where $ \gamma = dd^c \rho - \omega $ is a semi-positive $(1,1)$-form.  From Theorem \ref{thm 4.2},  
$( dd^c (u+\rho) )^k \wedge \gamma^{n-k} $  defines a positive Radon measure. By linearity, we can define 
 the operator $ (\omega + dd^c u)^n$. We have to check that $(\omega+dd^c u)^n$ is independent of $ \rho$: Let $ \rho' \in \mathcal{P}_\omega(\Omega)$. It remains then to prove that 
 \begin{align*}
     & \sum_{k=0}^n \binom{n}{k} (-1)^{n-k} ( dd^c (u+\rho))^k \wedge (dd^c \rho - \omega)^{n-k} \\ =  &\sum_{k=0}^n \binom{n}{k} (-1)^{n-k} ( dd^c (u + \rho'))^k \wedge (dd^c \rho' - \omega)^{n-k}.
 \end{align*}
 Let $D \subset \subset \Omega$, and  let $(u_j)$ denote the standard regularization of $u$. 
 We have \begin{align*}
     & \sum_{k=0}^n \binom{n}{k} (-1)^{n-k} ( dd^c (u_j+\rho))^k \wedge (dd^c \rho - \omega)^{n-k} \\ =  &\sum_{k=0}^n \binom{n}{k} (-1)^{n-k} ( dd^c (u_j + \rho'))^k \wedge (dd^c \rho' - \omega)^{n-k}
 \end{align*}
on $D$ for $j$ large enough. Letting $j \rightarrow +\infty$, the result follows from  Theorem \ref{thm 4.2}. 
\begin{definition}
Let $ u \in \mathcal{E}(\Omega, \omega)$. We define  the operator $ (\omega + dd^c u)^n $ by the  formula
$$ ( \omega + dd^c u)^n  = \sum_{k=0}^n \binom{n}{k} (-1)^{n-k} ( dd^c (u+\rho))^k \wedge (dd^c \rho - \omega)^{n-k}, $$
where $ \rho \in \mathcal{P}_\omega(\Omega)$.
\end{definition}
As a consequence of Theorem \ref{thm 4.2}, we obtain the following result:
\begin{theorem}\label{thm 4.10}
 The Monge-Ampère operator $ (\omega + dd^c .)^n $ is well  defined on the class $\mathcal{E}(\Omega,\omega)$. Furthermore, if $ (u_j)_j \subset {\mathcal{E}}(\Omega,\omega)$ is a decreasing sequence that converges to $u \in \mathcal{E}(\Omega,\omega)$, then the sequence of Radon measures $\left( ( \omega + dd^c u_j )^n \right)_j$ converges weakly to $ ( \omega + dd^c u)^n$.
\end{theorem}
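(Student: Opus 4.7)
The plan is to reduce Theorem \ref{thm 4.10} to the mixed Monge--Amp\`ere stability of Theorem \ref{thm 4.2}, combined with the decomposition of smooth forms provided by Proposition \ref{prop 4.1}. The well-definedness part is largely contained in the discussion immediately preceding the Definition, so the main content of the theorem is the weak convergence along decreasing sequences.

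First I would fix $\rho \in \mathcal{P}_\omega(\Omega)$ and set $\gamma := dd^c\rho - \omega$, a smooth semi-positive $(1,1)$-form. Expanding gives
$$(\omega + dd^c u)^n = \sum_{k=0}^n \binom{n}{k}(-1)^{n-k}(dd^c(u+\rho))^k \wedge \gamma^{n-k},$$
with $u + \rho \in \mathcal{E}(\Omega)$ by the definition of $\mathcal{E}(\Omega,\omega)$. Applying Proposition \ref{prop 4.1} to the smooth $(n-k,n-k)$-form $\gamma^{n-k}$ produces a finite decomposition $\gamma^{n-k} = \sum_l f_l T_l$ with $f_l$ smooth and $T_l$ a wedge of Hessians of smooth bounded psh functions; by the mixed extension of Theorem \ref{thm 4.2} recorded after Proposition \ref{prop 4.1}, each $(dd^c(u+\rho))^k \wedge T_l$ is a well-defined Radon measure, hence so is $(\omega + dd^c u)^n$. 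Independence of this construction from the choice of $\rho$ has already been verified in the paragraph preceding the Definition through standard regularization, so I would only cite this.

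For the weak continuity, let $u_j \searrow u$ in $\mathcal{E}(\Omega,\omega)$. Since $\rho$ is smooth and fixed, $u_j + \rho$ is a decreasing sequence in $\mathcal{E}(\Omega)$ with limit $u + \rho$. By linearity and the decomposition above, it suffices to show, for each $k$ and each $l$, that
$$(dd^c(u_j+\rho))^k \wedge T_l \;\longrightarrow\; (dd^c(u+\rho))^k \wedge T_l \quad \text{weakly as } j \to \infty,$$
which is exactly the convergence property established after Proposition \ref{prop 4.1}, itself a consequence of \cite[Lemma 3.2]{Ceg08}. Multiplying by the smooth functions $f_l$ preserves weak convergence of Radon measures, and summing over $l$ and then over $k$ with the binomial coefficients yields $(\omega + dd^c u_j)^n \rightharpoonup (\omega + dd^c u)^n$.

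I do not anticipate any serious obstacle, since the analytic work is already packaged into Theorem \ref{thm 4.2} and Proposition \ref{prop 4.1}. The conceptual subtlety, flagged in the introduction as the reason Cegrell's proof breaks down, is that there is no Stokes-type integration by parts against the non-closed form $\omega$; the strategy here circumvents this by absorbing $\omega$ into the closed smooth form $dd^c\rho$, so that only the smooth semi-positive $\gamma = dd^c\rho - \omega$ remains, and its higher powers are then handled by the Hessian decomposition of Proposition \ref{prop 4.1}. The only point deserving care is that the binomial expansion produces signed contributions; positivity of the limit measure is not claimed in the statement, but in fact follows from weak continuity applied to a standard smooth regularization of $u$, for which $(\omega + dd^c u_\varepsilon)^n$ is a positive volume form.
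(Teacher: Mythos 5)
Your proposal is correct and follows essentially the same route as the paper: the paper's proof of Theorem \ref{thm 4.10} is precisely the binomial expansion in $\gamma = dd^c\rho - \omega$ set up in the preceding subsection, with each term $(dd^c(u+\rho))^k\wedge\gamma^{n-k}$ handled by Proposition \ref{prop 4.1} and Theorem \ref{thm 4.2}, so that both well-definedness and weak continuity along decreasing sequences follow by linearity. Your closing remark on positivity via smooth regularization is a harmless addition not made explicit in the paper.
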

\begin{remark}
    The construction that we gave to the operator $(\omega + dd^c .)^n$ yields more useful information.  It shows, in particular, that the study of the measure $(\omega + dd^c u)^n$, for $u\in \mathcal{E}(\Omega)$,  relies on the study of 
   the mixed Monge-Ampère measures
$$ dd^c u_1 \wedge ... \wedge dd^c u_k, \; \; u_1, ... u_k \in \mathcal{E}(\Omega). $$
This remark will be very helpful in the sequel.
\end{remark}
\section{Degenerate complex Monge-Ampère equations}\label{sec 5} 
In this section, we investigate the existence of solutions to the Dirichlet problem
$$ (\omega + dd^c u)^n = F(u,.) d\mu, $$
for a   positive Radon measure  $\mu$  vanishing on pluripolar sets  and a bounded measurable function  $F: \mathbb{R} \times \Omega \rightarrow [0,+\infty[$   
       which is continuous and non-decreasing  in the first variable.
       
       First, we discuss some properties of the operator $(\omega + dd^c .)^n$.
\subsection{Properties of the operator $(\omega + dd^c .)^n$ on $\mathcal{E}(\Omega,\omega)$}

The following example shows that there is a large difference between the operator $(dd^c.)^n$ and the operator  $(\omega+dd^c .)^n$.
\begin{example}\label{ex 5.1}
Let $\Omega$ denote the unit ball in $\mathbb{C}^2$, and set $\omega =  |z_1|^2 \, dd^c |z_2|^2$. The $(1,1)-$form
$\omega$ is semi-positive and 
$$ dd^c \omega = dd^c |z_1|^2\wedge dd^c |z_2|^2 = 4/\pi \; dV_2, $$
where $dV_2$ is the Lebesgue measure on $\mathbb{C}^2$.
For
\begin{center}
    $u = \max(\log|z|, -1)$ and $v = \max(\log|z|,-1/2)$,
\end{center}
 we have  $ u, v  \in \mathcal{E}_0(\Omega)$, $ u \leq v$ on $\Omega$ and $u = v$ in a neighborhood of $\partial\Omega$. However    
\begin{align*}
    &\int_\Omega (\omega + dd^c u)^2 - (\omega + dd^c v)^2 \\ &= \int_\Omega (dd^c u)^2 - (dd^c v)^2 + 8/\pi \int_\Omega (u - v) d\lambda(z) \\ &= 8/\pi \int_\Omega (u - v) dV_2 < 0.
\end{align*}
This is  contrary to \cite[Corollary 4.3]{BT82} and proves that several inequalities of Cegrell do not extend to the operator $(\omega + dd^c .)^n$.
\end{example}

We denote by $ \mathcal{K}^a(\Omega,\omega)$ the set of $ u \in \mathcal{K}(\Omega,\omega)$ such that the measure $ ( \omega + dd^c u)^n $ vanishes on pluripolar sets. We have the following proposition:
\begin{prop}\label{prop 5.2}
If  $ u \in \mathcal{E}^a(\Omega,\omega)$, then  $ u + \rho \in \mathcal{E}^a(\Omega)$ for all $\rho \in \mathcal{P}_\omega(\Omega)$. 
\end{prop}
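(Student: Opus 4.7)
Set $v:=u+\rho$ and $\gamma:=dd^c\rho-\omega\ge 0$. By the very definition of $\mathcal{E}(\Omega,\omega)$ one already has $v\in\mathcal{E}(\Omega)$, so it only remains to verify that $(dd^c v)^n$ places no mass on pluripolar sets. Writing $dd^c v=(\omega+dd^c u)+\gamma$ and invoking the construction of the operator $(\omega+dd^c\cdot)^n$ from Section \ref{sec 4}, I get the identity of positive Radon measures
$$(dd^c v)^n=\sum_{k=0}^{n}\binom{n}{k}\,(\omega+dd^c u)^k\wedge\gamma^{n-k},$$
each summand being well-defined as a positive Radon measure by Proposition \ref{prop 4.1} and Theorem \ref{thm 4.2}.

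\textbf{Reduction to mixed measures with smooth psh factors.} It therefore suffices to show that every summand $(\omega+dd^c u)^k\wedge\gamma^{n-k}$ vanishes on pluripolar sets. The case $k=n$ is immediate from the hypothesis $u\in\mathcal{E}^a(\Omega,\omega)$. For $k<n$, Proposition \ref{prop 4.1} lets me express
$$\gamma^{n-k}=\sum_{j}f_j\, dd^c h^j_1\wedge\cdots\wedge dd^c h^j_{n-k}$$
with smooth coefficients $f_j$ and smooth negative psh functions $h^j_i$ near $\bar\Omega$. This reduces the problem to checking that, for any smooth psh $h_1,\dots,h_{n-k}$ with $k<n$, the mixed measure
$$(\omega+dd^c u)^k\wedge dd^c h_1\wedge\cdots\wedge dd^c h_{n-k}$$
charges no pluripolar set.

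\textbf{Passing to the weak limit; main obstacle.} I would handle the mixed case by approximating $u$ by the truncations $u_\ell:=\max(u,-\ell)\in\PSH(\Omega,\omega)\cap L^\infty$, so $u_\ell\searrow u$. For each $\ell$, the Bedford--Taylor theory extended to the non-closed $\omega$ (as developed in \cite{BT76,KN15}) ensures that $(\omega+dd^c u_\ell)^k\wedge dd^c h_1\wedge\cdots\wedge dd^c h_{n-k}$ is absolutely continuous with respect to the Monge--Amp\`ere capacity, hence puts no mass on pluripolar sets; Theorem \ref{thm 4.10} (in its mixed form, obtained exactly as in the proof of Theorem \ref{thm 4.2}) gives weak convergence of these measures to the analogous object built from $u$. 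The principal difficulty, and the reason the argument requires $k<n$, is to transfer the ``no charge on pluripolar sets'' property through this weak limit. With at least one smooth psh factor $dd^c h_i$ present, an integration by parts against $h_i$, combined with the Chern--Levine--Nirenberg estimates and the $L^1_{\mathrm{loc}}$-convergence $u_\ell\to u$, yields convergence of total masses on relatively compact subdomains; this mass preservation rules out concentration on pluripolar sets in the limit and closes the argument.
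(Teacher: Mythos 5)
Your overall strategy is the same as the paper's up to reversing the roles of the two forms: you expand $(dd^c(u+\rho))^n$ binomially into $(\omega+dd^c u)^k\wedge\gamma^{n-k}$, the paper expands $(\omega+dd^c u)^n$ into $(dd^c(u+\rho))^k\wedge(dd^c\rho-\omega)^{n-k}$; in both cases everything reduces to showing that the mixed terms, which contain at least one smooth factor, put no mass on a pluripolar set $A$. The problem is your mechanism for that crucial step. The claim that weak convergence of $(\omega+dd^c u_\ell)^k\wedge dd^c h_1\wedge\cdots\wedge dd^c h_{n-k}$ together with convergence of masses on relatively compact subdomains ``rules out concentration on pluripolar sets in the limit'' is false as a general principle: take $u_\ell=\max(\log|z|,-\ell)$ in the unit ball; then $(dd^c u_\ell)^n$ are uniformly spread measures on small spheres, none of which charges any pluripolar set, their total masses are constant, and yet they converge weakly to a Dirac mass at the origin, which is carried by a pluripolar set. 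So mass preservation along a decreasing approximation gives you nothing here, and your argument has a genuine gap exactly at the point you yourself flag as the ``principal difficulty.''

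What actually closes the argument --- and what the paper uses --- is the H\"older-type inequality for mixed Monge--Amp\`ere measures on Borel sets, \cite[Lemma 4.4]{ACCH08}: for $w_1,\dots,w_n\in\mathcal{E}(\Omega)$ and $A$ pluripolar,
\begin{equation*}
\int_A dd^c w_1\wedge\cdots\wedge dd^c w_n\;\leq\;\prod_{i=1}^n\Bigl(\int_A(dd^c w_i)^n\Bigr)^{1/n}.
\end{equation*}
After choosing $\sigma\in\mathcal{P}_{-\omega}(\Omega)$ so that $0\leq dd^c\rho-\omega\leq dd^c(\rho+\sigma)$ (or, in your formulation, after expanding each mixed term into currents of the form $dd^c w_1\wedge\cdots\wedge dd^c w_n$ with at least one $w_i$ smooth), the factor $\int_A(dd^c w_i)^n$ corresponding to the smooth ($\mathcal{C}^2$) potential vanishes because its Monge--Amp\`ere measure is absolutely continuous with respect to Lebesgue measure and pluripolar sets are Lebesgue-null; hence every mixed term vanishes on $A$ directly, with no limiting argument needed. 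Replacing your last paragraph by this appeal to \cite[Lemma 4.4]{ACCH08} turns your proposal into essentially the paper's proof.
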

\begin{proof}
Let $A$ be a pluripolar subset of $ \Omega$. We have 
\begin{align*}
  0 &=\int_A (\omega + dd^c u)^n \\
    &= \int_A (dd^c (u +\rho))^n + \sum_{k=1}^n (-1)^k C_n^k\int_A (dd^c \rho - \omega)^k \wedge (dd^c(u+\rho))^{n-k}.
\end{align*}
Let $\sigma \in \mathcal{P}_{-\omega}(\Omega)$. \cite[Lemma 4.4]{ACCH08} gives for all $ 1 \leq k \leq n$:
\begin{align*}
     \int_A (dd^c\rho- \omega)^k \wedge (dd^c(u+\rho))^{n-k} &\leq  \int_A (dd^c (\rho+\sigma)) )^k \wedge (dd^c(u+\rho))^{n-k} \\
     &\leq \left( \int_A (dd^c (\rho+\sigma))^n \right)^{k/n} \left( \int_A (dd^c(u+\rho))^n \right)^{(n-k)/n} \\
     &= 0.
\end{align*}
Hence $(dd^c(u+\rho))^n(A)=0$ and $u+\rho \in \mathcal{E}^a(\Omega)$. 
\end{proof}
The operator $(\omega + dd^c .)^n$ verifies a maximum principle that is similar to that of the operator $(dd^c .)^n$ \cite[Theorem 4.1]{KH09} (see also \cite[Theorem 2.2]{BGZ08}).
\begin{theorem}\label{thm 5.3}
If $u, v \in \mathcal{E}(\Omega,\omega)$, then 
$$ \textit{1}_{\{u>v\}} ( \omega + dd^c u)^n = \textit{1}_{\{u>v\}} ( \omega + dd^c \max(u,v))^n. $$
\end{theorem}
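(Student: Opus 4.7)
The plan is to reduce the identity to the classical mixed Monge-Amp\`ere maximum principle for bounded potentials against smooth closed positive test currents, and then to pass to the Cegrell class $\mathcal{E}(\Omega)$ by approximation. Fix $\rho \in \mathcal{P}_\omega(\Omega)$, and set $U := u+\rho$, $V := v+\rho \in \mathcal{E}(\Omega)$, $\gamma := dd^c\rho - \omega \geq 0$. Using the expansion
$$(\omega + dd^c w)^n = \sum_{k=0}^n \binom{n}{k}(-1)^{n-k}(dd^c(w+\rho))^k \wedge \gamma^{n-k}$$
valid for every $w \in \mathcal{E}(\Omega,\omega)$, and noting that $\{u>v\}=\{U>V\}$ and $\max(u,v)+\rho=\max(U,V)$, the statement reduces to proving, for each $k \in \{0,\dots,n\}$,
$$\textit{1}_{\{U>V\}}(dd^c U)^k \wedge \gamma^{n-k} = \textit{1}_{\{U>V\}}(dd^c \max(U,V))^k \wedge \gamma^{n-k}.$$

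To handle the non-closedness of $\gamma^{n-k}$, I would apply Proposition \ref{prop 4.1} to write $\gamma^{n-k} = \sum_j f_j T_j$, where the $f_j$ are smooth and each $T_j = dd^c\varphi^j_1 \wedge \cdots \wedge dd^c\varphi^j_{n-k}$ is a closed positive current with smooth plurisubharmonic potentials. By linearity, the problem is reduced to
$$\textit{1}_{\{U>V\}}(dd^c U)^k \wedge T = \textit{1}_{\{U>V\}}(dd^c \max(U,V))^k \wedge T \qquad (*)$$
for each such $T$. For $\epsilon>0$, set $O_\epsilon := \{U > V+\epsilon\}$. This is a plurifine open set on which $\max(U,V) = U$. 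By plurifine locality of mixed Monge-Amp\`ere measures, I would then obtain
$$\textit{1}_{O_\epsilon}(dd^c U)^k \wedge T = \textit{1}_{O_\epsilon}(dd^c \max(U,V))^k \wedge T.$$
Since $\textit{1}_{O_\epsilon} \nearrow \textit{1}_{\{U>V\}}$ as $\epsilon \searrow 0$, monotone convergence applied to the positive Radon measures $(dd^c U)^k \wedge T$ and $(dd^c \max(U,V))^k \wedge T$ yields $(*)$.

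The main obstacle is establishing the plurifine locality identity on $O_\epsilon$ for $U, V \in \mathcal{E}(\Omega)$. For bounded $U_s, V_s \in \mathcal{E}_0(\Omega)$ and smooth closed positive $T$, this is the classical Bedford-Taylor/\cite[Theorem 4.1]{KH09} mixed maximum principle. To pass to unbounded potentials, I would take decreasing approximations $U_s \searrow U$, $V_s \searrow V$ with $U_s, V_s \in \mathcal{E}_0(\Omega)$, apply the bounded identity on $\{U_s > V_s + \epsilon\}$, and combine it with the weak convergence $(dd^c U_s)^k \wedge T \weak (dd^c U)^k \wedge T$ and $(dd^c \max(U_s,V_s))^k \wedge T \weak (dd^c \max(U,V))^k \wedge T$ supplied by Theorem \ref{thm 4.2} and \cite[Lemma 3.2]{Ceg08}. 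Because indicator functions of plurifine sets are not continuous for weak convergence, the delicate point is replacing $\textit{1}_{O_\epsilon}$ by a regularized test function, using the convergence of $U_s - V_s$ to $U - V$ in capacity together with a quasi-continuity argument as in \cite[Theorem 4.26]{GZ17} (already invoked in Section \ref{sec 2}). Once this technical passage is secured, everything else is routine; note that the non-closedness of $\omega$ itself caused no additional difficulty because it has been absorbed cleanly by the Proposition \ref{prop 4.1} decomposition into closed positive pieces, in line with the structural remark following Theorem \ref{thm 4.10}.
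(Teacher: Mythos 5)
Your reduction is exactly the paper's: fix $\rho\in\mathcal{P}_\omega(\Omega)$, expand $(\omega+dd^c\cdot)^n$ binomially in $dd^c(\cdot+\rho)$ and $\gamma=dd^c\rho-\omega$, decompose $\gamma^{n-k}=\sum_j f_jT_j$ via Proposition~\ref{prop 4.1}, and reduce by linearity to the identity $\textit{1}_{\{U>V\}}(dd^cU)^k\wedge T_j=\textit{1}_{\{U>V\}}(dd^c\max(U,V))^k\wedge T_j$. The only divergence is in how that identity is settled. The paper closes in one line by citing \cite[Theorem 4.1]{KH09}, which is precisely the mixed maximum principle \emph{in Cegrell's classes}: it already applies to unbounded $U,V\in\mathcal{E}(\Omega)$ wedged against currents $dd^cu_1\wedge\cdots\wedge dd^cu_{n-k}$ with $u_i\in\mathcal{E}(\Omega)$ (in particular against the smooth $T_j$), so no further approximation is needed. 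You instead treat that result as available only for bounded potentials and sketch a passage to $\mathcal{E}(\Omega)$ by decreasing approximation, plurifine locality on $O_\epsilon=\{U>V+\epsilon\}$, and a capacity/quasi-continuity argument to replace the indicator by a regularizable test function. That route is the standard one (it is essentially how the cited theorem is proved) and identifies the correct technical obstacle, but as written it is only a plan: the step where $\textit{1}_{O_\epsilon}$ is exchanged against weak limits of $(dd^cU_s)^k\wedge T$ is exactly the hard point and is deferred rather than executed. Since that content is verbatim the statement of \cite[Theorem 4.1]{KH09}, the cleanest fix is simply to invoke it directly, as the paper does; if you insist on reproving it, the capacity argument must be carried out in full, including the treatment of the pluripolar set $\{V=-\infty\}$ where $O_\epsilon$ fails to be plurifine open in the naive sense.
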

\begin{proof}
Fix $\rho \in \mathcal{P}_\omega(\Omega)$.  Write 
$$ ( \omega + dd^c u)^n  = \sum_{k=0}^n \binom{n}{k} (-1)^{n-k} ( dd^c (u+\rho))^k \wedge (dd^c \rho - \omega)^{n-k}, $$
and
$$  (dd^c \rho - \omega)^{n-k} = \sum f_j T_j,  $$
where $f_j$ and $T_j$ are as in Proposition {\ref{prop 4.1}}. By linearity, it suffices to prove that
$$ \textit{1}_{\{ u > v\}} (  dd^c \max(u+\rho,v+\rho))^k \wedge T_j = \textit{1}_{\{ u > v\}} ( dd^c (u+\rho))^k \wedge T_j.  $$
But this is exactly \cite[Theorem 4.1]{KH09}.
\end{proof}
We shall derive  several consequences from the previous theorem.
\begin{corollary}\label{cor 5.4}
Let $u,v \in \mathcal{E}(\Omega,\omega)$, and let $\mu$ be a  positive Radon measure vanishing on pluripolar sets. If 
$$ 
(\omega + dd^c u)^n \geq \mu \; \emph{ and} \;  ( \omega + dd^c v)^n \geq \mu,
$$
then
$$  
(\omega + dd^c \max(u,v))^n \geq \mu. 
$$
\end{corollary}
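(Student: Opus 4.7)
The strategy is to combine the maximum principle (Theorem \ref{thm 5.3}) with a simple approximation that avoids the contact set $\{u=v\}$. Set $w := \max(u,v)$ and, for $\epsilon>0$, put $w_\epsilon := \max(u+\epsilon, v)$. After enlarging the reference function $\rho \in \mathcal{P}_\omega(\Omega)$ so that both $u+\epsilon+\rho$ and $v+\rho$ are non-positive (which is possible since $\mathcal{E}(\Omega,\omega)$ is independent of the choice of $\rho$), closure of $\mathcal{E}(\Omega)$ under $\max$ shows $w_\epsilon \in \mathcal{E}(\Omega,\omega)$, and clearly $w_\epsilon \searrow w$ as $\epsilon \searrow 0$.

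Applying Theorem \ref{thm 5.3} to the pair $(u+\epsilon, v)$, and using that $(\omega + dd^c \cdot)^n$ is unchanged by additive constants, yields
\[
\mathbf{1}_{\{u+\epsilon > v\}}(\omega + dd^c w_\epsilon)^n = \mathbf{1}_{\{u+\epsilon > v\}}(\omega + dd^c u)^n \geq \mathbf{1}_{\{u+\epsilon > v\}} \mu,
\]
and symmetrically $\mathbf{1}_{\{v > u+\epsilon\}}(\omega + dd^c w_\epsilon)^n \geq \mathbf{1}_{\{v > u+\epsilon\}} \mu$. Thus $(\omega + dd^c w_\epsilon)^n \geq \mu$ off the set $E_\epsilon := \{v - u = \epsilon\}$. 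The family $(E_\epsilon)_{\epsilon>0}$ consists of pairwise disjoint Borel subsets of $\Omega$, so by local finiteness of $\mu$ and a diagonal argument one can choose a sequence $\epsilon_k \searrow 0$ with $\mu(E_{\epsilon_k}) = 0$ for every $k$. For such $\epsilon_k$ the inequality $(\omega + dd^c w_{\epsilon_k})^n \geq \mu$ then holds on all of $\Omega$.

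To finish, apply Theorem \ref{thm 4.10} to the decreasing sequence $w_{\epsilon_k} \searrow w$ in $\mathcal{E}(\Omega,\omega)$: the measures $(\omega + dd^c w_{\epsilon_k})^n$ converge weakly to $(\omega + dd^c w)^n$. Since weak convergence preserves inequalities when tested against nonnegative continuous functions with compact support, $(\omega + dd^c w)^n \geq \mu$, as desired. The only point that requires care is verifying that the $w_\epsilon$ lie in $\mathcal{E}(\Omega,\omega)$ and that the contact sets $E_\epsilon$ can simultaneously be made $\mu$-null along a subsequence; once these are in place, Theorems \ref{thm 5.3} and \ref{thm 4.10} do all the work, and the pluripolar-vanishing hypothesis on $\mu$ is in fact not needed in this particular argument.
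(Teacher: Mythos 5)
Your overall strategy is the same as the paper's: perturb one of the two functions by a constant to move off the contact set, apply the maximum principle (Theorem \ref{thm 5.3}) separately on $\{u+\epsilon>v\}$ and on $\{v>u+\epsilon\}$, select constants $\epsilon_k\searrow 0$ for which the exceptional level set is $\mu$-negligible, and conclude by continuity along decreasing sequences (Theorem \ref{thm 4.10}). The paper does exactly this with $\max(u,v+\varepsilon_j)$, quoting the proof of \cite[Corollary 1.10]{GZ07} for the countability of the bad levels, which is the same device as your disjointness argument.

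There is, however, a genuine gap, and it sits exactly where you assert that the hypothesis ``$\mu$ vanishes on pluripolar sets'' is superfluous. The sets $E_\epsilon=\{v=u+\epsilon\}$ are \emph{not} pairwise disjoint in general: each of them contains $P:=\{u=-\infty\}\cap\{v=-\infty\}$, which may be a nonempty pluripolar set since functions of $\mathcal{E}(\Omega,\omega)$ are allowed to have poles. Only the family $E_\epsilon\setminus P$ is pairwise disjoint, so your selection of $\epsilon_k$ yields $\mu(E_{\epsilon_k}\setminus P)=0$ but says nothing about $\mu(P)$; the inequality you actually obtain before the limit is $(\omega+dd^c w_{\epsilon_k})^n\geq \mu-\mathbf{1}_{P}\,\mu$, and hence in the limit only $(\omega+dd^c w)^n\geq\mu-\mathbf{1}_{P}\,\mu$. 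The pluripolar-vanishing hypothesis is precisely what guarantees $\mu(P)=0$ and removes this defect; it is not dispensable, and this is also the role of the citation of \cite[Corollary 1.10]{GZ07} in the paper, whose argument discards the common polar set before running the disjointness count. A secondary, fixable point: you cannot arrange $u+\epsilon+\rho\leq 0$ by ``enlarging'' $\rho$, since every $\rho\in\mathcal{P}_\omega(\Omega)$ vanishes on $\partial\Omega$; if $u+\rho\to 0$ at the boundary, no admissible $\rho$ works, and the membership $w_\epsilon\in\mathcal{E}(\Omega,\omega)$ should instead be justified by locality of the asserted inequality (as licensed by the paper's opening sentence of the proof) --- a caveat the paper's own choice $\max(u,v+\varepsilon_j)$ shares.
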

\begin{proof}
Since the situation is local, there is no loss of generality in assuming $\mu(\Omega)$ finite. By  Theorem  {\ref{thm 5.3}}, 
we have 
\begin{equation*}
     ( \omega + dd^c \max(u,v))^n  \geq  \textit{1}_{\{u>v\}} ( \omega + dd^c u)^n +  \textit{1}_{\{u < v\}} ( \omega + dd^c v)^n \geq \textit{1}_{\{u \neq v\}} \mu. 
\end{equation*}
If $\mu (\{u = v\}) = 0$, then the result follows. 
The proof of \cite[Corollary 1.10]{GZ07} shows that 
$$  \mu(\{ u = v + t \}) = 0, \; \;  \forall t \in \mathbb{R}\setminus I, $$
where $I$ is at most countable. Take $\varepsilon_j \in \mathbb{R}\setminus I$,  $\varepsilon_j \searrow 0$.   We have 
$$ ( \omega  + dd^c \max(u, v + \varepsilon_j))^n \geq \mu. $$ 
It suffices then to let $\varepsilon_j \rightarrow 0$.
\end{proof}

The following corollary  return to J. P. Demailly \cite[Proposition 11.9]{Dem91} in the case $\omega=0$.
\begin{corollary}\label{cor 5.5}
Let $u, v \in \mathcal{E}(\Omega,\omega)$. If  $(\omega + dd^c v)^n$ 
vanishes on pluripolar sets, then 
$$ ( \omega + dd^c \max(u,v))^n \geq \textit{1}_{\{u> v \}} ( \omega + dd^c u)^n + \textit{1}_{\{ u \leq v \}} ( \omega + dd^c v)^n. $$
In particular, if moreover $u \geq v$ then 
$$ \textit{1}_{\{u=v\}} ( \omega + dd^c u)^n \geq  \textit{1}_{\{u=v\}} ( \omega + dd^c v)^n. $$
\end{corollary}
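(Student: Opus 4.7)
The plan is to follow the strategy of Corollary \ref{cor 5.4}: first establish the inequality on the open set $\{u\neq v\}$ by a direct application of Theorem \ref{thm 5.3}, then extend it across the contact set $\{u=v\}$ using the hypothesis that $(\omega+dd^c v)^n$ does not charge pluripolar sets.

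First I would apply Theorem \ref{thm 5.3} to the pair $(u,v)$, which yields $\textit{1}_{\{u>v\}}(\omega+dd^c u)^n = \textit{1}_{\{u>v\}}(\omega+dd^c \max(u,v))^n$, and then to the pair $(v,u)$, which by symmetry of $\max$ gives $\textit{1}_{\{v>u\}}(\omega+dd^c v)^n = \textit{1}_{\{v>u\}}(\omega+dd^c \max(u,v))^n$. Since the supports $\{u>v\}$ and $\{v>u\}$ are disjoint Borel sets, summing these two identities immediately produces
\[
(\omega+dd^c \max(u,v))^n \geq \textit{1}_{\{u>v\}}(\omega+dd^c u)^n + \textit{1}_{\{u<v\}}(\omega+dd^c v)^n,
\]
which is the desired inequality restricted to $\{u\neq v\}$.

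The remaining task is to upgrade $\textit{1}_{\{u<v\}}$ to $\textit{1}_{\{u\leq v\}}$, i.e.\ to absorb the contact set $\{u=v\}$ into the term carrying $(\omega+dd^c v)^n$. Here I would imitate the shift argument from the end of the proof of Corollary \ref{cor 5.4}: the adaptation of \cite[Corollary 1.10]{GZ07} to the Radon measure $\nu:=(\omega+dd^c v)^n$, which vanishes on pluripolar sets by assumption, produces a countable set $I\subset\mathbb{R}$ such that $\nu(\{u=v+t\})=0$ for every $t\in\mathbb{R}\setminus I$. Picking $t_j\in\mathbb{R}\setminus I$ with $t_j\searrow 0$, the sequence $\max(u,v+t_j)$ decreases to $\max(u,v)$, so Theorem \ref{thm 4.10} gives weak convergence of the left-hand-side MA measures; applying the inequality above to the pair $(u,v+t_j)$ and using $(\omega+dd^c(v+t_j))^n=(\omega+dd^c v)^n$, one passes to the limit on the right via monotone convergence: $\textit{1}_{\{u<v+t_j\}}(\omega+dd^c v)^n$ decreases to $\textit{1}_{\{u\leq v\}}(\omega+dd^c v)^n$ (the boundary levels $\{u=v+t_j\}$ carrying no $\nu$-mass by construction), while $\textit{1}_{\{u>v+t_j\}}(\omega+dd^c u)^n$ increases to $\textit{1}_{\{u>v\}}(\omega+dd^c u)^n$.

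The main obstacle will be to justify this limit procedure rigorously against every non-negative continuous test function with compact support, exactly as in Corollary \ref{cor 5.4}. Once the general inequality is established, the ``in particular'' statement is immediate: when $u\geq v$ one has $\max(u,v)=u$ and $\{u\leq v\}=\{u=v\}$, so the inequality reduces to $(\omega+dd^c u)^n \geq \textit{1}_{\{u>v\}}(\omega+dd^c u)^n + \textit{1}_{\{u=v\}}(\omega+dd^c v)^n$; subtracting $\textit{1}_{\{u>v\}}(\omega+dd^c u)^n$ from the trivial decomposition $(\omega+dd^c u)^n = \textit{1}_{\{u>v\}}(\omega+dd^c u)^n + \textit{1}_{\{u=v\}}(\omega+dd^c u)^n$ yields the claim $\textit{1}_{\{u=v\}}(\omega+dd^c u)^n\geq \textit{1}_{\{u=v\}}(\omega+dd^c v)^n$.
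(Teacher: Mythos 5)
Your proposal is correct and follows essentially the same route as the paper: the paper's proof of this corollary simply refers back to the argument of Corollary \ref{cor 5.4}, namely the application of Theorem \ref{thm 5.3} on $\{u>v\}$ and $\{u<v\}$ followed by the shift $v\mapsto v+\varepsilon_j$ along levels of zero $(\omega+dd^c v)^n$-mass, which is exactly what you do. Your derivation of the ``in particular'' statement by subtracting $\textit{1}_{\{u>v\}}(\omega+dd^c u)^n$ from the trivial decomposition also matches the paper's (unwritten) intent.
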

\begin{proof}
The proof of the first affirmation is the same as that of Corollary {\ref{cor 5.4}}. The second one follows easily from the first.
\end{proof}

\subsection{The comparison principle in $\mathcal{N}^a(\Omega,\omega,\phi)$}
 The classical comparison principle (for the operator $(dd^c .)^n$ acting on bounded psh functions) is proven in \cite[Corollary 4.4]{BT82}. This result has been generalized to  the  class $\mathcal{F}^a(\Omega)$ \cite[Theorem 5.15]{Ceg04}, to the large class $\mathcal{N}^a(\Omega,\phi)$ \cite[Theorem 4.4]{Ceg08}
 and to the operator $(\omega + dd^c .)^n$  \cite[Corollary 3.4]{KN15}. We propose the following general version:
\begin{theorem}\label{thm 5.6}
Let $u \in \mathcal{N}^a(\Omega,\omega,\phi)$, and let $v \in \mathcal{E}(\Omega,\omega)$. If
 $$ ( \omega + dd^c u)^n \leq ( \omega + dd^c v)^n \; \;\text{on} \; \{ u<v\}, $$
then $u \geq v$.
\end{theorem}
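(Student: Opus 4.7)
The plan is to follow the envelope-based strategy of \cite{LN22} sketched in the introduction. I would argue by contradiction, assuming $\{u<v\}\neq\emptyset$. A preliminary reduction to the case $u\leq v$ on $\Omega$ can be obtained by replacing $u$ with $\min(u,v)$: the new function still satisfies the Monge-Amp\`ere inequality on $\{u<v\}$ thanks to Theorem \ref{thm 5.3}, and it still belongs to $\mathcal{N}^a(\Omega,\omega,\phi)$ (using that $\min$ preserves the structural bounds on $u+\rho$, and that the pluripolar part of the resulting MA measure stays zero when taking the minimum with a function from $\mathcal{E}(\Omega,\omega)$). The goal then becomes to prove $u=v$.

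The next step is to construct and analyze a plurisubharmonic envelope of $f:=u-v\leq 0$. Fixing $\rho\in\mathcal{P}_\omega(\Omega)$, both $u+\rho$ and $v+\rho$ lie in $\mathcal{E}(\Omega)$; in particular they are quasi-continuous, so $f=(u+\rho)-(v+\rho)$ is quasi-continuous. Choosing $C>0$ with $v+\rho\leq C$ on $\bar\Omega$, the function $\psi:=u+\rho-C$ satisfies $\psi\leq f$, and its Monge-Amp\`ere measure does not charge pluripolar sets because $u+\rho\in\mathcal{N}^a(\Omega,\phi)$. Applying Theorem \ref{thm 2.4} produces the envelope
$$
w:=P(f)=\Big(\sup\{\varphi\in\PSH(\Omega):\varphi\leq f\}\Big)^*\in\mathcal{E}^a(\Omega)\cap\PSH^-(\Omega),
$$
and shows that $(dd^cw)^n$ is concentrated on the contact set $K:=\{w=f\}$.

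The crux of the proof is to establish $(dd^cw)^n\equiv 0$. Since $w$ is psh and $v$ is $\omega$-psh, the sum $v+w$ belongs to $\PSH(\Omega,\omega)$, and $u\geq v+w$ on $\Omega$ with equality on $K$. I would invoke the max principle of Corollary \ref{cor 5.5} applied to the larger function $u$ and the smaller function $v+w$. Expanding, and using that $w\in\mathcal{E}^a$ forces each mixed current $(\omega+dd^cv)^{n-k}\wedge(dd^cw)^k$ ($k\geq 1$) to vanish on pluripolar sets, one should obtain
$$
\mathbf{1}_K(\omega+dd^cu)^n\geq \mathbf{1}_K(\omega+dd^cv)^n+\mathbf{1}_K\sum_{k=1}^n\binom{n}{k}(\omega+dd^cv)^{n-k}\wedge(dd^cw)^k.
$$
The hypothesis $(\omega+dd^cu)^n\leq(\omega+dd^cv)^n$ on $\{u<v\}\supset K\cap\{w<0\}$ then forces the positive sum on the right-hand side to vanish on $K\cap\{w<0\}$; in particular $\mathbf{1}_{K\cap\{w<0\}}(dd^cw)^n=0$. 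Combined with the support property from Theorem \ref{thm 2.4}, this gives $(dd^cw)^n=0$ outside $\{w=0\}$, and the strong maximum principle for the nonpositive psh function $w$ forces $w\equiv 0$. Hence $u\geq v$ quasi-everywhere, thus everywhere by $\omega$-plurisubharmonicity, contradicting $\{u<v\}\neq\emptyset$.

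I expect the hard part to be verifying that the cross currents $(\omega+dd^cv)^{n-k}\wedge(dd^cw)^k$ with $k\geq 1$ do not charge pluripolar sets and that Corollary \ref{cor 5.5} can legitimately be applied despite $\omega$ not being closed. This forces one to invoke the decomposition of $\omega$ into pieces $f_j\alpha_j$ from Proposition \ref{prop 4.1}, reducing the assertion to classical mixed Monge-Amp\`ere statements handled in \cite{KH09} together with Chern--Levine--Nirenberg bounds. A secondary technical point is the careful verification of $\mathcal{N}^a$-membership in the initial reduction to $u\leq v$.
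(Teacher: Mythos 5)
Your overall strategy --- form the envelope of $u-v$, use Theorem \ref{thm 2.4} to concentrate its Monge--Amp\`ere measure on the contact set, and then kill that measure via the maximum principle of Corollary \ref{cor 5.5} together with the hypothesis --- is the paper's strategy. But two steps are genuinely broken. First, the reduction to $u\leq v$ by ``replacing $u$ with $\min(u,v)$'' fails at the outset: the minimum of two ($\omega$-)plurisubharmonic functions is in general not ($\omega$-)plurisubharmonic, so $\min(u,v)$ need not lie in $\PSH(\Omega,\omega)$, let alone in $\mathcal{N}^a(\Omega,\omega,\phi)$; moreover Theorem \ref{thm 5.3} concerns $\max$, not $\min$. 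The paper instead replaces $v$ by $\psi:=\max(u,v)\in\mathcal{E}(\Omega,\omega)$ and uses Corollary \ref{cor 5.5} to upgrade the hypothesis into the \emph{global} inequality $(\omega+dd^c u)^n\leq(\omega+dd^c\psi)^n$ on $\Omega$. This matters downstream: since you keep $v$ and only control things on $\{u<v\}$, your argument only shows that $(dd^c w)^n$ vanishes on $K\cap\{w<0\}$, leaving possible mass of $(dd^c w)^n$ on $\{w=0\}$ that you never rule out; with $\psi$ in place of $v$ the inequality holds on all of $K$ and one obtains $(dd^c w)^n\equiv 0$.

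Second, the endgame is a non sequitur. From $w\leq 0$ plurisubharmonic and $(dd^c w)^n$ carried by $\{w=0\}$ (or even $(dd^c w)^n\equiv 0$) no ``strong maximum principle'' forces $w\equiv 0$: the function $\mathrm{Re}(z_1)-2$ on the unit ball is negative, plurisubharmonic and maximal. What is missing is boundary information, and this is exactly where the hypothesis $u\in\mathcal{N}^a(\Omega,\omega,\phi)$ must be used --- your proof never uses it beyond quasi-continuity. The paper shows $P(f)\in\mathcal{N}^a(\Omega)$: by Proposition \ref{prop 5.2} one has $u+\rho\in\mathcal{N}^a(\Omega,\phi)$, hence $\phi+\tilde{u}\leq u+\rho\leq\phi$ for some $\tilde{u}\in\mathcal{N}^a(\Omega)$, and the maximality of $\phi$ gives $\psi+\rho\leq\phi$, so that $\tilde{u}\leq u+\rho-\phi\leq u-\psi=f$; then $P(f)\geq\tilde{u}$ together with \cite[Corollary 3.14]{Ceg08} puts $P(f)$ in $\mathcal{N}^a(\Omega)$, and \cite[Lemma 3.12]{Ceg08} converts $(dd^c P(f))^n=0$ into $P(f)=0$. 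Your minorant $u+\rho-C$ only yields $P(f)\in\mathcal{E}^a(\Omega)$, which suffices for Theorem \ref{thm 2.4} but not for the conclusion, so the boundary condition is lost. The cross-term/pluripolar issue you flag at the end is real but comparatively minor; it is handled by the same Cauchy--Schwarz argument as in Proposition \ref{prop 5.2}.
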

\begin{proof}
Set $\psi = \max(u,v)$. We have by Corollary {\ref{cor 5.5}} 
\begin{align*}
    (\omega + dd^c \psi)^n &\geq \textit{1}_{\{u \geq v\}} (\omega + dd^c u)^n + \textit{1}_{\{u < v\} }(\omega + dd^c v)^n \\
    &\geq ( \omega + dd^c u)^n.
\end{align*}

Setting $f : = u - \psi$, we  prove that $f = 0$. Define
$$P(f) :=  \sup \{ h \in \PSH(\Omega)\; ; \; h \leq f \}.  $$
It follows from \cite[Proposition 5.1]{BT82} that
 $P(f)^*=P(f)$ almost everywhere, hence $P(f)^* \leq u-v$ a.e. From this we get $P(f)^*+v\leq u$ a.e., hence everywhere because these are psh functions. It thus follows that $P(f)^*=P(f)$. 
Let $\rho \in \mathcal{P}_\omega(\Omega)$.
We have $u+\rho \in \mathcal{N}^a(\Omega,\phi)$ by Proposition \ref{prop 5.2}. Thus we get $\tilde{u} + \phi \leq u+\rho \leq \phi$ for some   $\tilde{u} \in \mathcal{N}^a(\Omega)$. From this we get $\tilde{u} \leq u + \rho - \phi \leq f$ because $\psi + \rho = \max(u+ \rho, v + \rho) \leq \phi$ by maximality of $\phi$. Therefore $\tilde{u} \leq P(f)$ and $P(f) \in \mathcal{N}^a(\Omega)$ according to  \cite[Corollary 3.14]{Ceg08}. 

Setting $D = \{ P(f) = f \}$, since $P(f) \leq f$, we obtain by Corollary {\ref{cor 5.5}}
$$ \textit{1}_D ( \omega + dd^c ( \psi + P(f)))^n \leq \textit{1}_D ( \omega + dd^c u)^n \leq \textit{1}_D ( \omega + dd^c \psi)^n. $$
It  follows that $\textit{1}_D (dd^c P(f))^n=0$, and hence $(dd^c P(f))^n=0$ according to  Theorem \ref{thm 2.4}. It follows from \cite[Lemma 3.12]{Ceg08}
that $P(f) = 0$. Thus, we conclude that $f = 0$ and therefore $u \geq v$. 
\end{proof}
Theorem D in the introduction follows immediately from the last theorem. 
\begin{corollary}[Theorem D in the introduction]\label{cor 5.7}
  Let $\mu \leq \nu$ be positive Radon measures vanishing on pluripolar  sets. 
Assume $u \in \mathcal{N}(\Omega,\omega,\phi)$ and  $v \in \mathcal{E}(\Omega,\omega)$ are such that  $v \leq \phi$ in $\partial \Omega$,
$$ (\omega + dd^c u)^n = F(u,.) d\mu \; \text{and} \;  (\omega + dd^c v)^n = F(v,.) d\nu. $$
 Then $u \geq v$. 
\end{corollary}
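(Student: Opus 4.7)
The plan is to deduce this statement directly from the general comparison principle of Theorem~\ref{thm 5.6}; only a handful of hypotheses need to be verified, and no new technique is required beyond what is already available.

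First, I would upgrade the membership of $u$ from $\mathcal{N}(\Omega,\omega,\phi)$ to $\mathcal{N}^a(\Omega,\omega,\phi)$. Since $F$ is bounded, we have the pointwise estimate $F(u,\cdot)\,d\mu \leq \|F\|_\infty\, d\mu$ as measures, and $\mu$ vanishes on pluripolar sets by hypothesis; hence so does the Monge-Amp\`ere measure $(\omega + dd^c u)^n = F(u,\cdot)\,d\mu$. This is exactly the defining property of $\mathcal{N}^a(\Omega,\omega,\phi)$. The same reasoning shows that $(\omega + dd^c v)^n = F(v,\cdot)\, d\nu$ is a legitimate (finite, non-pluripolar) Radon measure, so both sides of the inequality we are about to write down make sense.

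Next, I would verify the comparison inequality $(\omega + dd^c u)^n \leq (\omega + dd^c v)^n$ on the set $\{u < v\}$. On this set the monotonicity of $F$ in its first variable gives $F(u,\cdot) \leq F(v,\cdot)$, and combining this with $\mu \leq \nu$ yields
$$ (\omega + dd^c u)^n = F(u,\cdot)\,d\mu \leq F(v,\cdot)\,d\mu \leq F(v,\cdot)\,d\nu = (\omega + dd^c v)^n. $$
The assumed boundary inequality $v \leq \phi$ on $\partial \Omega$ is exactly what is needed to justify the step in the proof of Theorem~\ref{thm 5.6} where maximality of $\phi$ is invoked to promote $\max(u+\rho, v+\rho) \leq \phi$ from the boundary to all of $\Omega$. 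With all hypotheses in hand, applying Theorem~\ref{thm 5.6} directly gives $u \geq v$.

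The genuine work has already been done in Theorem~\ref{thm 5.6}, via the plurisubharmonic envelope $P(u-v)$ together with the contact-set analysis of Theorem~\ref{thm 2.4}; the present corollary is essentially a packaging statement. The only slightly delicate point worth emphasising is the initial promotion of $u$ from $\mathcal{N}$ to $\mathcal{N}^a$, which relies crucially on both the boundedness of $F$ and the non-pluripolar character of $\mu$; without either of these, the pluripolar-vanishing condition required by Theorem~\ref{thm 5.6} could fail.
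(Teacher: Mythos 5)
Your proposal is correct and follows essentially the same route as the paper: both reduce the corollary to Theorem~\ref{thm 5.6} by combining the monotonicity of $F$ in its first variable with $\mu\leq\nu$. The only cosmetic difference is that the paper first passes through $\max(u,v)$ via Corollary~\ref{cor 5.5} and applies Theorem~\ref{thm 5.6} to the pair $(u,\max(u,v))$, whereas you verify the measure inequality for $(u,v)$ directly on $\{u<v\}$, which works equally well; your observation that $F(u,\cdot)\,d\mu$ charges no pluripolar set, so that $u\in\mathcal{N}^a(\Omega,\omega,\phi)$, is precisely the upgrade the paper uses implicitly when invoking Theorem~\ref{thm 5.6}.
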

\begin{proof}
     It follows from Corollary \ref{cor 5.5} that 
\begin{align*}
    (\omega + dd^c \max(u,v))^n &\geq \textit{1}_{\{u> v \}} ( \omega + dd^c u)^n + \textit{1}_{\{ u \leq v \}} ( \omega + dd^c v)^n \\
    &\geq \textit{1}_{\{u> v \}} F(u,.) d\mu  + \textit{1}_{\{ u \leq v \}} F(v,.) d\mu \\
    &= F(\max(u,v),.) d\mu \\
    &\geq F(u,.) d\mu = (\omega + dd^c u)^n,
\end{align*}
where the last inequality follows from the fact that the function $F$ is non-deceasing in the first variable. 
 Theorem \ref{thm 5.6} implies  $u \geq v$.
\end{proof}
\subsection{Solution to the Dirichlet problem}
In this subsection, we study the main question of solving the complex Monge-Ampère type equations in $\mathcal{K} (\Omega,\omega,\phi)$, where $\phi \in \mathcal{E}(\Omega) \cap \mathcal{C}^0(\Omega)$ is maximal and  $\mathcal{K}  \in \{  \mathcal{F}^a$, $\mathcal{E}_p$, $\mathcal{E}_\chi$, $\mathcal{N}^a\}$. The following theorem extends the result of Czy{\.z} \cite{Cz09} to the operator $(\omega + dd^c.)^n$ for any smooth real $(1,1)$-form $\omega$.
\begin{theorem}[Theorem C in the introduction]\label{thm 5.8}
 Let $\mu$ be a positive Radon measure vanishing on pluripolar sets, and consider a bounded measurable function  $F: \mathbb{R} \times \Omega \rightarrow [0,+\infty[$   
       which is continuous and non-decreasing  in the first variable. 
     If $\mu \leq  (dd^c w)^n$ for some $w \in \mathcal{K}(\Omega)$,
      then there is a uniquely determined $u \in \mathcal{K}(\Omega,\omega,\phi)$ such that   
      $$(\omega + dd^c u)^n = F(u,.) d\mu.$$
\end{theorem}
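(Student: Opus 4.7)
The plan is to get uniqueness directly from Corollary \ref{cor 5.7} and existence via a monotone approximation built on the Ko{\l}odziej--Nguyen bounded existence theorem \cite[Theorem 3.1]{KN23b}. For uniqueness, applying Corollary \ref{cor 5.7} with $\mu=\nu$ and exchanging the roles of $u$ and $v$ forces $u_1=u_2$ for any two candidate solutions $u_1,u_2\in\mathcal{K}(\Omega,\omega,\phi)\subseteq\mathcal{N}(\Omega,\omega,\phi)$; the boundary condition $u_i\leq\phi$ on $\partial\Omega$ required by Corollary \ref{cor 5.7} is automatic because $u_i+\rho\in\mathcal{K}(\Omega,\phi)$ forces $u_i+\rho\leq\phi$ and $\rho=0$ on $\partial\Omega$.

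For existence, I would set $w_k:=\max(w,-k)\in\mathcal{E}_0(\Omega)\cap L^{\infty}(\Omega)$ and $\mu_k:=\mathbf{1}_{\{w>-k\}}\mu$. Since $\mu$ does not charge pluripolar sets and $\{w=-\infty\}$ is pluripolar, $\mu_k\nearrow\mu$; moreover, on the open set $\{w>-k\}$ one has $w=w_k$ locally and hence $(dd^c w)^n=(dd^c w_k)^n$ there, giving $\mu_k\leq (dd^c w_k)^n$. By \cite[Theorem 3.1]{KN23b} there is a bounded solution $u_k\in\mathcal{F}(\Omega,\omega,\phi)\cap L^{\infty}(\Omega)$ of $(\omega+dd^c u_k)^n=F(u_k,\cdot)\,d\mu_k$ with boundary value $\phi$. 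Applying Corollary \ref{cor 5.7} with $(u,\mu)=(u_k,\mu_k)$ and $(v,\nu)=(u_{k+1},\mu_{k+1})$ yields $u_k\geq u_{k+1}$, so the sequence decreases to a limit $u$. Granting for the moment that $u\in\mathcal{K}(\Omega,\omega,\phi)$, Theorem \ref{thm 4.10} gives the weak convergence $(\omega+dd^c u_k)^n\to(\omega+dd^c u)^n$; on the right-hand side, splitting
\[
\int\chi\,F(u_k,\cdot)\,d\mu_k=\int\chi\,F(u_k,\cdot)\,d\mu-\int\chi\,F(u_k,\cdot)\,d(\mu-\mu_k),
\]
and combining $u_k\searrow u$ (hence $\mu$-a.e.), continuity and boundedness of $F$, and $\mu_k\nearrow\mu$, one obtains $F(u_k,\cdot)\,d\mu_k\to F(u,\cdot)\,d\mu$ weakly by dominated convergence, so the equation passes to the limit.

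The hard part will be securing a uniform lower bound that places the decreasing limit $u$ inside $\mathcal{K}(\Omega,\omega,\phi)$. My strategy is to construct an $\omega$-plurisubharmonic barrier $v\in\mathcal{K}(\Omega,\omega,\phi)$ satisfying $(\omega+dd^c v)^n\geq F(v,\cdot)\,d\mu$, from which Corollary \ref{cor 5.7} forces $u_k\geq v$ for all $k$, hence $u\geq v$. Setting $M:=\sup F$, the inequality $M\mu\leq (dd^c(M^{1/n}w))^n$ with $M^{1/n}w\in\mathcal{K}(\Omega)$ supplies a Cegrell-type subsolution at the $\omega=0$ level through the subsolution theorem of \cite{ACCH08}. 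The non-closedness of $\omega$ then obstructs a direct lift of this subsolution: I would handle it by decomposing $\omega$ via Proposition \ref{prop 4.1} as a finite sum of smooth functions against positive closed $(1,1)$-forms, expanding $(\omega+dd^c\cdot)^n$ into mixed Monge-Amp\`ere terms exactly as in the definition of the operator in Section \ref{sec 4}, and combining the Cegrell subsolution with the boundary datum $\phi$ and a reference $\rho\in\mathcal{P}_\omega(\Omega)$ to produce $v$ in the right class. Once $v$ is in hand, the bound $u\geq v\in\mathcal{K}(\Omega,\omega,\phi)$ delivers the required membership and finishes the existence proof.
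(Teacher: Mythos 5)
Your overall architecture matches the paper's: approximate by bounded solutions from Ko{\l}odziej--Nguyen, show the sequence decreases, trap it above a barrier in $\mathcal{K}(\Omega,\omega,\phi)$, pass to the limit with Theorem \ref{thm 4.10} and dominated convergence, and get uniqueness from Corollary \ref{cor 5.7}. (The paper truncates the density in Cegrell's decomposition $\mu=f(dd^c\psi)^n$, $\psi\in\mathcal{E}_0(\Omega)$, rather than truncating $w$; your variant is workable, but note that your justification of $\mu_k\leq (dd^c w_k)^n$ via ``the open set $\{w>-k\}$'' is wrong as stated --- psh functions are only upper semicontinuous, so $\{w>-k\}$ need not be open; the correct tool is the maximum principle $\mathbf{1}_{\{w>-k\}}(dd^c\max(w,-k))^n=\mathbf{1}_{\{w>-k\}}(dd^c w)^n$ from \cite[Theorem 4.1]{KH09}.)

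The genuine gap is the barrier, which you yourself flag as ``the hard part'' and then only sketch. Your plan --- decompose $\omega$ via Proposition \ref{prop 4.1} into smooth functions against closed positive forms and ``combine'' a Cegrell subsolution with $\phi$ and some $\rho\in\mathcal{P}_\omega(\Omega)$ --- does not obviously close: the mixed terms in that expansion carry alternating signs (this is exactly why Example \ref{ex 5.1} shows the Cegrell inequalities fail for $(\omega+dd^c\cdot)^n$), so you cannot conclude a lower bound on $(\omega+dd^c v)^n$ from the $\omega=0$ subsolution this way. The paper's actual mechanism is different and much more direct: pick $\rho\in\mathcal{P}_\omega(\Omega)\cap\mathcal{P}_{-\omega}(\Omega)$, so that $\omega+dd^c\rho\geq 0$; take $h\in\mathcal{K}(\Omega,\phi)$ with $(dd^c h)^n=F(\phi-\rho,\cdot)\,d\mu$ from \cite[Theorem 4.14]{ACCH08}; then $(\omega+dd^c(h+\rho))^n=(\,(\omega+dd^c\rho)+dd^ch\,)^n\geq (dd^ch)^n\geq F(u_j,\cdot)\,d\mu_j=(\omega+dd^cu_j)^n$ (using $u_j\leq\phi-\rho$ and monotonicity of $F$), whence Corollary \ref{cor 5.4} and the \emph{bounded} comparison principle \cite[Corollary 3.4]{KN15} give $u_j\geq h+\rho\in\mathcal{K}(\Omega,\omega,\phi)$. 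A second, related gap: you invoke Corollary \ref{cor 5.7} to order the bounded approximants $u_k\geq u_{k+1}$ and to compare them with the barrier, but that corollary requires $u_k\in\mathcal{N}(\Omega,\omega,\phi)$, i.e.\ a lower bound of the form $u_k+\rho\geq\phi+\tilde u$ with $\tilde u\in\mathcal{N}(\Omega)$ --- which is essentially what the barrier is supposed to deliver, so the argument as ordered is circular (or at least needs a separate verification). The paper avoids this by using the comparison principle for bounded $\omega$-psh functions with prescribed boundary values (\cite[Proposition 2.2]{KN23b}, extended to merely real $\omega$ by the shift $u\mapsto u+\sigma$ with $dd^c\sigma>-\omega$ as in the Lemma preceding the proof) for both the monotonicity and the barrier comparison.
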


Before giving the proof of the last theorem,  we  need to generalize
the subsolution theorem of Ko{\l}odziej and Nguyen \cite[Theorem 4.1]{KN23a}  to the case when $\omega$ is merely real.
\begin{lemma}[The subsolution theorem of Ko{\l}odziej and Nguyen]
Let $\varphi \in \mathcal{C}^0(\partial\Omega)$ and let $\mu$ be a positive Radon measure. Suppose  $\mu \leq (dd^c v)^n$ for a bounded psh function $v$ such that $v = 0$ on $\partial \Omega$. Then, there exists $u \in \PSH(\Omega,\omega) \cap L^\infty(\Omega)$, $u = \varphi$ on $\partial \Omega$ and such that 
$$ 
(\omega + dd^c u)^n = \mu. $$ 
\end{lemma}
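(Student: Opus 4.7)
The aim is to extend \cite[Theorem 4.1]{KN23a}, originally stated for a Hermitian (positive) form $\omega$, to our setting where $\omega$ is only assumed to be a smooth real $(1,1)$-form. The key device is the auxiliary function $\rho \in \mathcal{P}_\omega(\Omega)$: the closed positive form $dd^c\rho$ dominates $\omega$, so $\gamma := dd^c\rho - \omega \geq 0$ is smooth and semi-positive. Under the change of unknown $\tilde u := u + \rho$, the condition $u \in \PSH(\Omega,\omega)\cap L^\infty(\Omega)$ becomes $\tilde u$ bounded psh with $dd^c\tilde u \geq \gamma$, and the equation transforms into
$$
(dd^c\tilde u - \gamma)^n = \mu, \qquad \tilde u = \varphi \text{ on } \partial\Omega.
$$
Expanding the left-hand side as a signed sum of mixed currents of bounded psh functions places the problem inside the framework developed in Section \ref{sec 4}.

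I would then approximate: fix a decreasing sequence of smooth positive measures $\mu_k$ with $\mu_k \leq (dd^c v_k)^n$ for bounded psh $v_k$ vanishing on $\partial\Omega$, together with smooth boundary data $\varphi_k \to \varphi$. For each $k$, solve the smooth equation by a continuity method along the path $\omega_t := (1-t) dd^c\rho + t\omega$, $t \in [0,1]$: at $t=0$ one recovers the classical Dirichlet problem $(dd^c(u_0+\rho))^n = \mu_k$, solvable by \cite{Kol95}; openness in $t$ follows from the implicit function theorem in the strictly $\omega_t$-psh, smooth regime; closedness reduces to a uniform $L^\infty$ bound, which the Ko\l odziej-Nguyen stability estimate delivers uniformly in $t$, since $\omega_t \leq dd^c\rho$ throughout and the subsolution $v$ supplies a lower barrier via the comparison principle of Theorem \ref{thm 5.6}. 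The limit $k\to\infty$ is then handled by the continuity of our Monge-Amp\`ere operator along decreasing sequences (Theorem \ref{thm 4.10}) and standard $L^\infty$ compactness of psh families; uniqueness and continuous boundary extension of the limit follow again from Theorem \ref{thm 5.6}.

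The main obstacle will be the uniform $L^\infty$ control along the continuity path. Example \ref{ex 5.1} warns that when $\omega$ is not semi-positive, standard monotonicity and Cegrell-type inequalities for $(\omega + dd^c\cdot)^n$ break down, so one cannot simply transplant the proof of \cite[Theorem 4.1]{KN23a}. The workaround is to carry out all estimates on the genuinely psh representative $\tilde u_t = u_t + \rho$, feeding the Ko\l odziej-Nguyen $L^\infty$ machinery through the expansion
$$
(\omega_t + dd^c u_t)^n = \sum_{k=0}^n \binom{n}{k} (-1)^{n-k} (dd^c \tilde u_t)^k \wedge \gamma_t^{n-k},
$$
with $\gamma_t := dd^c\rho - \omega_t \geq 0$, and absorbing the lower-order mixed terms into the bound by exploiting the fixed smoothness of $\gamma_t$ and the bounded subsolution $v$.
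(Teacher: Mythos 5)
There is a genuine gap: your reduction goes in the wrong direction. Setting $\tilde u = u+\rho$ with $\rho\in\mathcal{P}_\omega(\Omega)$ turns the equation into $(dd^c\tilde u-\gamma)^n=\mu$ with $\gamma=dd^c\rho-\omega\geq 0$, i.e.\ a Monge--Amp\`ere equation whose reference form $-\gamma$ is \emph{non-positive}. This is outside the Hermitian framework of Ko{\l}odziej--Nguyen, and Section \ref{sec 4} only \emph{defines} the operator through such expansions; it does not solve Dirichlet problems for it. The continuity method you then fall back on does not close this gap. The measure $\mu$ is an arbitrary positive Radon measure dominated by $(dd^c v)^n$; you do not construct the smooth approximants $\mu_k$ with controlled bounded subsolutions $v_k$ (a decreasing sequence of smooth measures converging to a general $\mu$ is already problematic), openness via the implicit function theorem needs smooth, strictly $\omega_t$-psh solutions, i.e.\ a regularity theory that is unavailable with merely continuous boundary data and degenerate right-hand side, and closedness needs a uniform $L^\infty$ estimate for reference forms $\omega_t=(1-t)dd^c\rho+t\omega$ that are not semi-positive for $t$ near $1$ --- exactly the regime where, as Example \ref{ex 5.1} shows, the Cegrell-type monotonicity underlying those estimates fails. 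The promised ``absorption of the lower-order mixed terms'' is the whole difficulty, and it is not substantiated; those terms carry alternating signs. (Invoking Theorem \ref{thm 5.6} for the barrier is also delicate, since it requires hypotheses on the competitors that your $u_t$ are not yet known to satisfy.)

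The missing idea is much simpler and goes the opposite way: instead of subtracting a potential to make the reference form negative, \emph{add} one to make it positive. Choose $\sigma\in\mathcal{C}^\infty(\bar\Omega)$ with $dd^c\sigma>-\omega$, so that $\omega+dd^c\sigma$ is a genuine Hermitian form. Since $(\omega+dd^c u)^n=\bigl((\omega+dd^c\sigma)+dd^c(u-\sigma)\bigr)^n$, the problem for $u$ with reference form $\omega$ is literally the problem for $w=u-\sigma$ with reference form $\omega+dd^c\sigma$ and boundary data $\varphi-\sigma$. The bounded subsolution theorem of Ko{\l}odziej--Nguyen \cite[Theorem 2.3]{KN23b} applies verbatim to that Hermitian problem and yields $w$; one concludes by setting $u=w+\sigma$. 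No approximation, continuity path, or a priori estimate is needed.
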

\begin{proof}
Let $\sigma \in \mathcal{C}^\infty(\bar{\Omega})$ be such that $dd^c \sigma > -\omega$. Applying \cite[Theorem 2.3]{KN23b} to $\omega + dd^c\sigma$ and $F(.+ \sigma,.)$ yields $w \in \PSH(\Omega, \omega + dd^c \sigma)$, $w = \varphi - \sigma$ on $\partial \Omega$ and such that 
$$ ( \omega + dd^c \sigma + dd^c w)^n = F(w+\sigma, .) d\mu. $$
Thus, it  suffices to take $u = w +\sigma$.
\end{proof}
\begin{remark}
The same proof shows that
\cite[Corollary 3.4]{KN15} and \cite[Proposition 2.2]{KN23b} hold  when  $\omega$ is merely real. 
\end{remark}
We move on to  the proof of Theorem \ref{thm 5.8}.
\begin{proof}[Proof of Theorem {\ref{thm 5.8}}]
By \cite[Theorem 5.11]{Ceg04}, there is $\psi \in \mathcal{E}_0(\Omega)$ and $f \in L^1_{loc}((dd^c \psi)^n)$ such that
$\mu = f(dd^c \psi)^n$.
It follows from \cite[Theorem 2.3]{KN23b} that,
for every $j$, there exists $u_j \in \PSH(\Omega,\omega) \cap L^\infty(\Omega)$  such that $u_j =\phi$ on $\partial \Omega$ and  
$$ (\omega + dd^c u_j)^n = F(u_j,.) \min(f,j) (dd^c \psi)^n. $$
The sequence $(u_j)_j$ is decreasing by \cite[Proposition 2.2]{KN23b}.

Fix $\rho \in \mathcal{P}_\omega(\Omega)\cap \mathcal{P}_{-\omega}(\Omega)$.
 It follows from \cite[ Theorem 4.14]{ACCH08} that there is $h \in \mathcal{K}(\Omega,\phi)$ such that 
$$ F(\phi - \rho,.) d\mu =  (dd^c h)^n, $$
because $F$ is bounded and $\mu \leq (dd^c w)^n$ for a $w \in \mathcal{K}(\Omega)$.
Using the fact that the function $F$ is non-decreasing  in the first variable, we have
$$ (\omega + dd^c u_j)^n \leq F(u_j,.) d\mu \leq F(\phi - \rho,.) d\mu \leq (\omega + dd^c (h+\rho))^n. $$
Corollary \ref{cor 5.4} gives
$$ (\omega + dd^c u_j)^n \leq (\omega + dd^c 
\max(u_j,h+\rho))^n. $$
Since $u_j = \phi \geq h+\rho$  on $\partial \Omega$, 
it follows from \cite[Corollary 3.4]{KN15} that $u_j \geq h+\rho$ everywhere in $\Omega$, and hence the function $u := \lim_j u_j$ belongs to $\mathcal{K}(\Omega,\omega,\phi)$. Theorem \ref{thm 4.10} gives 
  $$ (\omega + dd^c u)^n = \lim (\omega + dd^c u_j)^n \; \; \text{weakly}. $$
  Since $F$ is bounded and continuous in the first variable, we have by Lebesgue’s dominated convergence theorem
$$ (\omega + dd^c u_j)^n = F(u_j,.) \min (f,j) (dd^c \psi)^n \rightarrow F(u,.) d\mu, $$
in the weak sense of measure. 
 That implies
$$ (\omega + dd^c u)^n = F(u,.) d\mu.   $$
The solution $u$ is uniquely determined by Corollary \ref{cor 5.7}.
\end{proof}
As a consequence, taking $F=1$ in the last theorem yields the following description of the range of the operator $(\omega + dd^c .)^n$. 
\begin{corollary}
 Let $\mu$ be a positive Radon measure. The following statements are equivalent: 
\begin{itemize}
    \item[(i)] the equation $\mu = (dd^c u)^n$ has a solution $u \in \mathcal{K}(\Omega)$;
    \item[(ii)] the equation  $ (\omega + dd^c v)^n = \mu$ has  a unique solution $v \in \mathcal{K}(\Omega,\omega)$;
    \item[(iii)] the equation  $ (\omega + dd^c \varphi)^n = \mu$ has  a unique solution $\varphi \in \mathcal{K}(\Omega,\omega, \phi)$.
\end{itemize}
\end{corollary}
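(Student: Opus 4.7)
The plan is to establish the equivalences by combining Theorem \ref{thm 5.8} (with $F \equiv 1$) with the elementary comparison of positive $(1,1)$-forms used throughout the paper.

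The forward implications (i) $\Rightarrow$ (iii) and (i) $\Rightarrow$ (ii) are immediate from Theorem \ref{thm 5.8}: the solution $u$ of (i) provides $\mu = (dd^c u)^n$, trivially satisfying the subsolution hypothesis $\mu \leq (dd^c u)^n$ with $u \in \mathcal{K}(\Omega)$. Theorem C then produces a unique $\varphi \in \mathcal{K}(\Omega, \omega, \phi)$ solving $(\omega + dd^c \varphi)^n = \mu$, uniqueness coming from Corollary \ref{cor 5.7}. For (ii) one specializes to the trivial maximal function $\phi \equiv 0$, so that $\mathcal{K}(\Omega, \omega, 0) = \mathcal{K}(\Omega, \omega)$.

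The main non-trivial step is (ii) $\Rightarrow$ (i). Fix $\rho \in \mathcal{P}_\omega(\Omega)$; then $v + \rho \in \mathcal{K}(\Omega)$ by the very definition of $\mathcal{K}(\Omega, \omega)$. Since $dd^c \rho - \omega \geq 0$ pointwise, the positive $(1,1)$-forms satisfy $\omega + dd^c v \leq dd^c(v + \rho)$, and expanding the $n$-th power as a sum of mixed products of positive forms yields the measure inequality
\[
\mu \;=\; (\omega + dd^c v)^n \;\leq\; (dd^c(v + \rho))^n.
\]
Thus $w := v + \rho \in \mathcal{K}(\Omega)$ is a genuine subsolution for the pure Monge-Amp\`ere problem. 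Invoking Theorem \ref{thm 5.8} once more, this time with $\omega$ replaced by the zero form and $\phi \equiv 0$, one obtains $u \in \mathcal{K}(\Omega, 0, 0) = \mathcal{K}(\Omega)$ satisfying $(dd^c u)^n = \mu$, which is exactly (i).

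The implication (iii) $\Rightarrow$ (i) is handled analogously: the same form comparison shows $\mu \leq (dd^c(\varphi + \rho))^n$ with $\varphi + \rho \in \mathcal{K}(\Omega, \phi)$. The obstacle here is that $\varphi + \rho$ lies in the shifted Cegrell class rather than in $\mathcal{K}(\Omega)$ itself, so Theorem \ref{thm 5.8} does not apply verbatim. I would close this gap by exploiting the decomposition $\varphi + \rho \geq \tilde{u} + \phi$ with $\tilde{u} \in \mathcal{K}(\Omega)$ and appealing to the class-specific existence and subsolution results of \cite{Ceg98, Ceg04, Ceg08, ACCH08, Cz09} to produce $u \in \mathcal{K}(\Omega)$ with $(dd^c u)^n = \mu$. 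I expect this final step to be the main technical hurdle, since transferring a subsolution from the shifted class $\mathcal{K}(\Omega, \phi)$ back to $\mathcal{K}(\Omega)$ requires controlling the mixed Monge-Amp\`ere terms created by the bounded maximal function $\phi$, and this must be done class by class in $\{\mathcal{F}^a, \mathcal{E}_p, \mathcal{E}_\chi, \mathcal{N}^a\}$.
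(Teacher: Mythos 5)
Your treatment of (i)\,$\Rightarrow$\,(ii),(iii) matches the paper (both are direct applications of Theorem~\ref{thm 5.8} with $F\equiv 1$, using $\phi\equiv 0$ for (ii)). Your (ii)\,$\Rightarrow$\,(i) is sound and is actually a more self-contained route than the paper's: expanding $(dd^c(v+\rho))^n=\sum_k\binom{n}{k}(\omega+dd^c v)^k\wedge(dd^c\rho-\omega)^{n-k}\geq(\omega+dd^c v)^n$ (each mixed term is a positive measure by the Section~\ref{sec 4} machinery) gives the subsolution $v+\rho\in\mathcal{K}(\Omega)$, and Theorem~\ref{thm 5.8} with $\omega=0$ closes the loop; you should just record that $\mu$ does not charge pluripolar sets, which holds because $\mathcal{K}$ ranges over $\{\mathcal{F}^a,\mathcal{E}_p,\mathcal{E}_\chi,\mathcal{N}^a\}$ (cf.\ Proposition~\ref{prop 5.2}).

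The genuine gap is (iii)\,$\Rightarrow$\,(i), which you correctly identify but do not carry out: your subsolution $\varphi+\rho$ lands in $\mathcal{K}(\Omega,\phi)$, and Theorem~\ref{thm 5.8} requires domination by $(dd^c w)^n$ with $w\in\mathcal{K}(\Omega)$, so the argument stops there. The paper avoids this transfer problem entirely by a different mechanism: it invokes the \emph{intrinsic characterizations} of the range of $(dd^c\cdot)^n$ on each class --- \cite[Theorem 8.2]{Ceg98}, \cite[Theorem 3.9]{Ceg08}, \cite[Theorem 4.14]{ACCH08}, \cite[Theorem 11]{Ben15}. These characterize membership of $\mu$ in the range by conditions on $\mu$ alone (finite total mass and non-charging of pluripolar sets for $\mathcal{F}^a$, integrability conditions such as $\chi(\mathcal{E}_\chi)\subset L^1(\mu)$ for the energy classes, existence of $\psi\in\mathcal{E}(\Omega)$ with $\int_\Omega\psi\,d\mu>-\infty$ for $\mathcal{N}^a$), and such conditions are verified just as easily from $\mu\leq(dd^c(\varphi+\rho))^n$ with $\varphi+\rho\in\mathcal{K}(\Omega,\phi)$ as from a subsolution in $\mathcal{K}(\Omega)$, since $\phi$ is bounded and maximal. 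In other words, the ``main technical hurdle'' you anticipate --- controlling the mixed terms created by $\phi$ class by class --- is precisely what these cited theorems dispose of, and your proof is incomplete without either quoting them at that level of precision or reproving the relevant estimates.
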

\begin{proof}
    The implications (i) $\Rightarrow$ (ii) and (i) $\Rightarrow$ (iii) follows from Theorem \ref{thm 5.8}. The equivalence is achieved by \cite[Theorem 8.2]{Ceg98}, \cite[Theorem 3.9]{Ceg08}, \cite[Theorem 4.14]{ACCH08} and \cite[Theorem 11]{Ben15}.
\end{proof}

\end{document}